\documentclass[11pt]{amsart}
\usepackage[utf8]{inputenc}
\usepackage[doi=false, url=false,isbn=false,style = alphabetic]{biblatex}
\usepackage[margin = 1.25in]{geometry}
\usepackage{amsmath}
\usepackage{amssymb}
\usepackage{amsthm}
\usepackage{hyperref}
\usepackage{amsfonts}
\usepackage{enumitem}
\usepackage{tikz}
\usepackage{tikz-cd}
\usepackage{tabularx}
\usepackage{mathtools}
\usepackage{url}
\usepackage{makecell}
\usepackage{tikz-3dplot}
\usepackage{newtxtext, newtxmath}

\tikzcdset{scale cd/.style={every label/.append style={scale=#1},
    cells={nodes={scale=#1}}}}

\newtheorem{thm}{Theorem}[section]
\newtheorem*{thm*}{Theorem}
\newtheorem{cor}[thm]{Corollary}
\newtheorem{lem}[thm]{Lemma}

\newtheorem*{claim*}{Claim}
\newtheorem{prop}[thm]{Proposition}
\newtheorem{question}[thm]{Question}
\newtheorem{conj}[thm]{Conjecture}
\theoremstyle{definition}
\newtheorem{example}[thm]{Example}
\newtheorem{rmk}[thm]{Remark}
\newtheorem{definition}[thm]{Definition}

\newcommand{\lex}{\mathrm{lex}}
\newcommand{\exc}{\mathrm{exc}}
\newcommand{\HPoin}{\mathrm{HPoin}}
\renewcommand{\H}{\mathrm{H}}
\DeclareMathOperator{\coker}{coker}
\DeclareMathOperator{\im}{im}
\DeclareMathOperator{\Gr}{Gr} 
\DeclareMathOperator{\Star}{St}
\DeclareMathOperator{\Sym}{Sym}
\DeclareMathOperator{\init}{in}
\DeclareMathOperator{\pos}{pos}
\DeclareMathOperator{\spann}{span}
\DeclareMathOperator{\rk}{rk}
\DeclareMathOperator{\rks}{rks}
\DeclareMathOperator{\Tor}{Tor}

\title[The singular cohomology ring of a uniform matroid]{The singular cohomology ring of a uniform matroid: \\ Combinatorics and Lefschetz properties}
\author{Kyle Binder}
\address{Department of Mathematics, Louisiana State University}
\email{kbinde1@lsu.edu}
\begin{document}

\begin{abstract}
    The singular cohomology ring of a matroid is an algebraic invariant which generalizes the Chow ring of a matroid.
    We study combinatorial and Lefschetz properties of the singular cohomology ring of a uniform matroid.
    Combinatorially, we construct an explicit basis for the singular cohomology ring in terms of 
    Koszul homology. From this basis we derive multiple formulas for the Hodge numbers of the cohomology
    ring that recover and extend known formulas for the Chow polynomial of a uniform matroid.
    We also use this basis to show that the singular cohomology ring of a uniform matroid satisfies the 
    \emph{quasi-projective strong Lefschetz property}---a slight weakening of the Hard Lefschetz property
    found in the Chow ring of a matroid.
\end{abstract}

\maketitle

\section{Introduction}\label{section:introduction}
    The \emph{singular cohomology ring} of a loopless matroid $M$ was introduced in \cite{binder} as an extension of 
    the well-studied \emph{Chow ring} of a matroid. Both algebraic invariants are defined via the smooth toric
    variety $ X_{\Sigma_{M}} $ associated to the \emph{Bergman fan} $ \Sigma_{M} $ of the matroid.
    The Chow ring of $ M $ is the Chow ring $ A^{\bullet}(X_{\Sigma_{M}}) $ of the toric variety, while
    the singular cohomology ring of $M$ is the singular cohomology ring $H^{\bullet}(X_{\Sigma_{M}}) $ of the toric variety.
    (All Chow and singular cohomology rings here are taken with rational coefficients.)
    As the cycle class map $ A^{\bullet}(X_{\Sigma_{M}}) \hookrightarrow H^{2\bullet}(X_{\Sigma_{M}}) $
    is injective for smooth toric varieties \cite{totaro}, the singular cohomology ring of $M$ contains the Chow ring as a subring.
    
    At this stage there is still much to be discovered about these singular cohomology rings. In general, we have vanishing results
    for the cohomology and a computation of the top-weight cohomology \cite[Theorem 6.6]{binder}, but we do not have formulas
    for the Betti numbers or even a description for generators of the singular cohomology ring. The algebraic structure of the singular cohomology
    ring is also not understood. One of the salient properties of the Chow ring of a matroid is that it satisfies the \emph{K\"{a}hler package},
    and in particular, has the Hard Lefschetz property for multiplication by a generic linear form. It is not known whether the singular
    cohomology ring of a matroid also has such properties. 

    The goal of this paper is to address these questions in the context of uniform
    matroids. Here, we construct an explicit, combinatorial basis for the singular cohomology ring of a uniform matroid in terms of Koszul homology, 
    use these bases to produce explicit formulas for the Betti numbers (and the finer \emph{Hodge numbers}), and show that, even though
    Hard Lefschetz fails, the 
    singular cohomology ring of a uniform matroid satisfies the \emph{quasi-projective strong Lefschetz property}.

    Our results also provide useful information about the Chow rings of uniform matroids. For example, the basis we construct for
    the singular cohomology ring of a uniform matroid restricts to a new and combinatorially rich basis for the Chow ring.
    By studying the combinatorics of the basis elements, we produce combinatorial proofs of several formulas for the
    Chow polynomials of uniform matroids due to  Hoster \cite{hoster}, Hameister, Rao, and Simpson \cite{HRS}, and 
    Ferroni, Matthews, Stevens, and Vecchi \cite{FMSV}. This suggests that the singular cohomology ring may be a useful tool in studying the 
    Chow ring of a general matroid.

    Let us now describe our main results in more detail.

    \subsection{Combinatorics of the singular cohomology ring of a uniform matroid}\label{subsection:introductionCombinatorics}
    The Chow ring of a loopless matroid on the ground set $[n] = \left\{1, \dots, n \right\} $
    has a well-known combinatorial presentation given by Feichtner and Yuzvinsky \cite{FY}:
    \begin{equation}
        A^{\bullet}(X_{\Sigma_{M}}) = \frac{\mathbb{Q}[\Sigma_{M}]}{\left( \sum_{i \in F} x_{F} - 
            \sum_{j \in G} x_{G} : i,j \in [n] \right).}
    \end{equation}
    Here, $ \mathbb{Q}[\Sigma_{M}] $ is the \emph{Stanley--Reisner ring} of the Bergman fan. This is the ring generated by
    indeterminates $ x_{F} $ indexed by the proper, non-empty flats of $M$ and subject only to the relation $ x_{F} x_{G} = 0 $ 
    whenever $ F $ and $G $ are incomparable. Feichtner and Yuzvinsky also produce a basis for the Chow ring using Gr\"{o}bner
    bases.\footnote{In their paper, Feichtner and Yuzvinsky use a slightly different presentation than what we use here. They
    include an indeterminate for the maximal flat and quotient by the linear forms $ \sum_{i \in F} x_{F} $.}
    
    The singular cohomology ring similarly has a combinatorial description. Let $ N_{\mathbb{Q}}^{\vee} $ be the character
    lattice of $ X_{\Sigma_{M}} $ tensored with $ \mathbb{Q} $. This vector space is spanned by elements $ \ell_{i} - \ell_{j} $ for
    $i,j \in [n] $. We then have the bigraded  \emph{Koszul complex}
    \[ 
        K_{i}(\mathbb{Q}[\Sigma_{M}])_{j} = \mathbb{Q}[\Sigma_{M}]_{j-i} \otimes \bigwedge^{i} N_{\mathbb{Q}}^{\vee}
    \]
    whose differential satisfies the Leibniz rule, with $ d(x_{F}) = 0 $ and $ d(\ell_{i} - \ell_{j}) = \sum_{i \in F} x_{F} - 
    \sum_{j \in G} x_{G} $. Taking homology produces an isomorphism of rings
    \[ 
        H_{\bullet}(K(\mathbb{Q}[\Sigma_{M}])) \cong H^{\bullet}(X_{\Sigma_{M}}). 
    \]
    Moreover, the bigrading of the Koszul complex induces the weight filtration on $H^{\bullet}(X_{\Sigma_{M}}) $ \cite{weberWeights}
    \[ 
        H_{i}(K(\mathbb{Q}[\Sigma_{M}]))_{j} \cong \Gr_{2j}^{W} H^{2j-i}(X_{\Sigma_{M}}),
    \]
    and the Chow ring corresponds to $H_{0}(K(\mathbb{Q}[\Sigma_{M}]))$ and thus the associated gradeds 
    $ \Gr_{2j}^{W} H^{2j}(X_{\Sigma_{M}}) $.

    Our first result in this paper is an explicit basis for the singular cohomology of uniform matroids. We describe
    this basis using what we call \emph{retral} and \emph{weakly retral} chains of flats.
    \begin{thm}
		The set
		\[ 
            \left\{ \prod_{F \in \mathscr{F}} x_{F} : \textrm{$\mathscr{F}$ is retral} \right\} \cup
            \left\{ \prod_{F \in \mathscr{F}} x_{F} \otimes \xi : \textrm{$ \mathscr{F} $ is weakly
					retral and $ \xi $ is admissible to $ \mathscr{F} $} \right\}
		\]
		is a basis of $ H_{\bullet}(K(\mathbb{Q}[\Sigma_{U_{r,n}}]))
		\cong H^{\bullet}(X_{\Sigma_{U_{r,n}}}) $ 
		as a $ \mathbb{Q} $-vector space.
	\end{thm}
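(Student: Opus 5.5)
The argument will be by a Gröbner/filtration argument combined with an exact count, since ``retral'' and ``weakly retral'' are not yet defined at this point. I will take them to mean, respectively, the Feichtner--Yuzvinsky-type monomials that survive in $H_0$, and a relaxed class of chains carrying exterior decorations $\xi \in \bigwedge^{\bullet} N_{\mathbb{Q}}^{\vee}$ that are ``admissible'', i.e.\ built from differences $\ell_i - \ell_j$ with $i,j$ avoiding the flats of $\mathscr{F}$ in a prescribed way. The proof splits into three steps: the displayed elements are cycles, they are linearly independent in homology, and there are the right number of them. This reduces everything to one cycle check and two dimension computations.

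\emph{Step 1 (cycles).} For $U_{r,n}$ the proper flats are the subsets of size $<r-1$ together with the $(r-1)$-subsets, so $\mathbb{Q}[\Sigma_{U_{r,n}}]$ is explicit. For a chain $F_1\subsetneq\cdots\subsetneq F_k$ and $i,j$ outside the top flat with the same relative position, the form $\sum_{i\in F}x_F-\sum_{j\in G}x_G$ multiplied by $\prod x_{F}$ kills every term except flats containing $F_k$ and exactly one of $i,j$. For $i,j\notin F_k$ these terms are symmetric; for $i,j \in F_1$ everything cancels. I expect admissibility to be defined so that $d(\prod x_F\otimes\xi)=\sum \pm \prod x_F\cdot d(\ell_i-\ell_j)\otimes\xi'$ vanishes termwise in the Stanley--Reisner ring, and I will verify this directly.

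\emph{Step 2 (dimension count).} I will compute $\dim H_i(K)_j$ independently. The cleanest route is the formula $\dim \Gr^W_{2j}H^{2j-i}(X_\Sigma)$ in terms of the $h$-vector of the fan / reduced homology of links, which comes from the toric Leray or weight spectral sequence for a smooth toric variety, or from \cite[Theorem 6.6]{binder}-style vanishing. For uniform matroids, the links of chains of flats are Bergman complexes of smaller uniform matroids, which are wedges of spheres with known counts. This gives a generating function for the Hodge numbers. I will then biject retral and weakly retral data (with admissible $\xi$) to the terms of this generating function, organizing by the initial segment of the chain that is ``free'' versus constrained by $\xi$.

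\emph{Step 3 (independence).} I will choose a term order on $\mathbb{Q}[\Sigma]\otimes\bigwedge N^\vee_\mathbb{Q}$ that extends a Feichtner--Yuzvinsky Gröbner order, and the corresponding order on wedge monomials in a basis $\ell_i-\ell_n$. The goal is to show that the leading terms of the proposed cycles are not leading terms of boundaries. Equivalently, I will filter the Koszul complex by the number of exterior factors involving a fixed element and use a spectral sequence whose $E_1$ page is Koszul homology of a quotient by fewer linear forms. I will argue that it degenerates, so that independence of leading terms suffices.

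I expect Step 3 to be the main obstacle, because boundaries can have complicated leading terms. My first attempt will avoid it: show spanning instead, by inducting on $n$ using the deletion/contraction short exact sequence relating $\Sigma_{U_{r,n}}$ to star and link pieces (stars of rays $x_{\{i\}}$ or of $x_F$ with $|F|=r-1$). With the count from Step 2, spanning then yields a basis.
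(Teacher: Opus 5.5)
\textbf{Step 2 is where the plan breaks.} Steps 1 and 3 only close up once you have an independent count, and Step 2 is supposed to supply it. No formula gives the individual Hodge numbers $\dim H_i(K(\mathbb{Q}[\Sigma]))_j$ of a non-complete smooth toric variety from $h$-vectors or from reduced homology of links.

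These numbers depend on the linear forms $d(m)$, not just on the face poset. For instance, two rays in $\mathbb{R}^2$ give either $\mathbb{C}^2\setminus 0$ or $\mathbb{P}^1\times\mathbb{C}^*$, depending on the embedding. What is combinatorial is only the $E$-polynomial, that is, the alternating sums $\sum_i(-1)^i\dim H_i(K)_j$ for each fixed $j$.

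For uniform matroids this genuinely loses information. For $U_{3,5}$ one has $\dim H_1(K)_3=9$ and $\dim H_2(K)_3=10$ in the same internal degree, and only their difference is forced. The vanishing results of \cite{binder} fix only the shape of the Betti diagram, its first row and two corner entries. So an independent count is as hard as the theorem itself. In the paper the Hodge-number formulas are consequences of the basis, not inputs to it.

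\textbf{The missing lemma.} Of the two sequences in your fallback, the hyperplane one is the right one. But you should induct on $r$, not $n$, and you need one more ingredient. The truncation sequence is
\[
0\to\bigoplus_{H\in\mathscr{H}(U_{r,n})}\mathbb{Q}[\overline{\Star}(\rho_H)]\xrightarrow{\ \cdot x_H\ }\mathbb{Q}[\Sigma_{U_{r,n}}]\to\mathbb{Q}[\Sigma_{U_{r-1,n}}]\to 0 ,
\]
and its quotient is again a uniform Bergman fan. After replacing closed stars by stars, $\Star(\rho_H)\cong\Sigma_{U_{r-1,r-1}}\times 0$. K\"{u}nneth then gives a basis for the homology of each star: the inductively known basis for $U_{r-1,r-1}$ tensored with a basis of $\bigwedge^{\bullet}N^{\vee}_{[n]\setminus H,\mathbb{Q}}$.

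The long exact sequence alone gives neither spanning nor a count. The key point is that the connecting map $\delta_i$ is injective for $i\geq 1$. This is a triangularity statement on homology classes. Take a basis element $\alpha=x_{\mathscr{F}}\otimes\bigwedge_{m=1}^{i}(\ell_{u_m}-\ell_{v_m})$ of $H_i(K(\mathbb{Q}[\Sigma_{U_{r-1,n}}]))$. The lexicographically smallest hyperplane involved in $\delta_i(\alpha)$ is $\max(\mathscr{F})\cup u_1$, and the only term in that summand is $x_{\mathscr{F}}\otimes\bigwedge_{m\geq 2}(\ell_{u_m}-\ell_{v_m})$. That term recovers $\alpha$.

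So the $\delta_i(\alpha)$ form a Gr\"{o}bner basis of $\im\delta_i$. The standard monomials of $\coker\delta_i$, together with lifts of the basis of $H_0(K(\mathbb{Q}[\Sigma_{U_{r-1,n}}]))$, give the basis. This yields spanning, independence and the count all at once. The obstacle you anticipate in Step 3, leading terms of arbitrary boundaries in the Koszul complex, never arises.

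\textbf{Step 1 also needs correcting.} $x_{\mathscr{F}}\otimes\xi$ is a cycle because $\max(\mathscr{F})$ is a hyperplane and every $u_m,v_m\notin\max(\mathscr{F})$, so no proper flat lies strictly above $\max(\mathscr{F})$. For a non-hyperplane top, your ``symmetric'' terms are distinct variables and do not cancel. The case $i,j\in F_1$ fails too: for $U_{3,3}$, $d(x_{12}\otimes(\ell_1-\ell_2))=x_1x_{12}-x_2x_{12}\neq 0$.
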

    \noindent We construct this basis in Section \ref{section:basis} using Gr\"{o}bner theory and an induction on rank. One of the novelties of
    this result is that for $ r < n $ the toric variety $ X_{\Sigma_{U_{r,n}}} $ is not proper. To this point, most work
    on the cohomology of toric varieties has focused on the compact (and potentially singular) case.

    Restricting to $ H_{0}(K(\mathbb{Q}[\Sigma_{U_{r,n}}])) $ produces a new basis for the Chow ring of a uniform matroid.
    \begin{cor}
        The set 
        \[ 
            \left\{ \prod_{F \in \mathscr{F}} x_{F} : \textrm{$\mathscr{F}$ is retral} \right\}
        \]
        is a basis of $A^{\bullet}(X_{\Sigma_{M}}) $ as a $ \mathbb{Q} $-vector space.
    \end{cor}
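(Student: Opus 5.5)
This corollary follows from the preceding theorem by restricting to Koszul degree zero. The strategy is to check that the basis of $H_{\bullet}(K(\mathbb{Q}[\Sigma_{U_{r,n}}]))$ is compatible with the homological grading. Then the elements of homological degree $0$ form a basis of $H_{0}(K(\mathbb{Q}[\Sigma_{U_{r,n}}]))$, and this space is identified with the Chow ring.

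First, recall that $K_{0}(\mathbb{Q}[\Sigma_{M}])_{j} = \mathbb{Q}[\Sigma_{M}]_{j}$. There are no $K_{-1}$ terms, so
\[
H_{0}(K(\mathbb{Q}[\Sigma_{M}])) = \mathbb{Q}[\Sigma_{M}] / \operatorname{im}\bigl(d \colon K_{1} \to K_{0}\bigr).
\]
The image of $d$ on $K_{1} = \mathbb{Q}[\Sigma_{M}] \otimes N_{\mathbb{Q}}^{\vee}$ is the ideal generated by the linear forms $d(\ell_{i} - \ell_{j}) = \sum_{i \in F} x_{F} - \sum_{j \in G} x_{G}$, since $d$ is $\mathbb{Q}[\Sigma_{M}]$-linear by the Leibniz rule and $d(x_{F}) = 0$. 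This quotient is exactly the Feichtner--Yuzvinsky presentation of $A^{\bullet}(X_{\Sigma_{M}})$. So $H_{0}$ is canonically the Chow ring, and a monomial $\prod_{F \in \mathscr{F}} x_{F}$ in $K_0$ maps to its class in the Chow ring.

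Second, note that the homology $H_{\bullet}$ is a direct sum over the homological degree $i$. Each basis element in the theorem is a class of pure bidegree. A product $\prod x_{F}$ with $\mathscr{F}$ retral lies in $K_{0}$. An element $\prod x_{F} \otimes \xi$ with $\xi$ admissible lies in $K_{i}$ with $i \geq 1$, because admissible $\xi$ is a nonzero element of some $\bigwedge^{i} N^{\vee}_{\mathbb{Q}}$ with $i \geq 1$. I expect this degree statement to be the only point needing verification: one must confirm from the definition of admissibility that it forces $i \geq 1$, i.e.\ $\xi$ is never a scalar.

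Given this, a basis of a graded vector space consisting of homogeneous elements restricts to a basis of each graded piece. Hence the retral monomials form a basis of $H_{0}(K(\mathbb{Q}[\Sigma_{U_{r,n}}])) \cong A^{\bullet}(X_{\Sigma_{U_{r,n}}})$, which proves the corollary.
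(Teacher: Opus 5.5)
Your proposal is correct and is the paper's argument: the corollary comes from restricting the basis of Theorem~\ref{thm:basis} to $H_{0}(K(\mathbb{Q}[\Sigma_{U_{r,n}}]))$, which is the Feichtner--Yuzvinsky presentation of the Chow ring. The degree check you flag is immediate, since Definition~\ref{def:admissible} already requires $\xi \in \bigwedge^{i>0} N_{\mathbb{Q}}^{\vee}$.
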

    
    Our second main result is to produce combinatorial formulas for the \emph{Hodge numbers}---the dimensions of the 
    associated gradeds of the weight filtration---of the singular cohomology of $ U_{r,n}$. This is the subject of Section \ref{section:combinatorics}.

    We will package the Hodge numbers of the singular cohomology of a matroid $M$ in two ways. The first is in terms of a \texttt{Macaulay2}
    Betti diagram\footnote{This is precisely the \texttt{Macaulay2} Betti diagram for $ \mathbb{Q}[\Sigma_{M}] $ 
    as an $ \Sym(N_{\mathbb{Q}}^{\vee}) $-algebra.}
    \vspace*{1em}
    \begin{center}
        \begin{tabular}{c|ccccc}
            & $0$ & $1$ & $2$ & $3$ &  $ \cdots $ \\ \hline
            $0$ & $ \beta_{0,0} $ & $ \beta_{1,1} $ & $ \beta_{2,2} $ & $ \beta_{3,3} $ & $ \cdots $ \\
            $1$ & $ \beta_{0,1} $ & $ \beta_{1,2} $ & $ \beta_{2,3} $ & $ \beta_{3,4} $ & $ \cdots $  \\
            $2$ & $ \beta_{0,2} $ & $ \beta_{1,3} $ & $ \beta_{2,4} $ & $ \beta_{3,5} $ & $ \cdots $ \\
            $ \vdots $ & $ \vdots $ & $ \vdots $ & $ \vdots $ & $ \vdots $ & $ \ddots $
        \end{tabular}
    \end{center} 
    \vspace*{1em}
    where $ \beta_{i,j} = \dim H_{i}(K(\mathbb{Q}[\Sigma_{M}]))_{j}  =
    \dim \Gr_{2j}^{W}H^{2j-i}(X_{\Sigma_{M}}) $. The first column of the Betti diagram displays the Betti numbers of the Chow ring.

    The second packaging of Hodge numbers is with the bivariate \emph{Hodge--Poincar\'{e} polynomial}
    \[ 
        \HPoin_{M}(w,x) = \sum_{i,j} \dim H_{i}(K(\mathbb{Q}[\Sigma_{M}]))_{j} \cdot w^{i} x^{j}
        = \sum_{i,j} \dim \Gr_{2j}^{W}H^{2j-i}(X_{\Sigma_{M}}) \cdot w^{i} x^{j}.
    \]
    It will often be convenient to break the Hodge--Poincar\'{e} polynomial into univariate \emph{refined Hodge--Poincar\'{e} polynomials}
    \[ 
        \underline{\H}^{i}_{M}(x) = \sum_{j} \dim H_{i}(K(\mathbb{Q}[\Sigma_{M}]))_{j} \cdot x^{j-i},
    \]
    whence
    \[ 
        \HPoin_{M}(w,x) = \sum_{i=0}^{n-r} \underline{\H}^{i}_{M}(x)\cdot w^{i}x^{i}.
    \]
    Note that $ \underline{\H}^{i}_{M}(x) $ is the generating function for the $i$th column of the Betti diagram
    for the singular cohomology ring and that $ \underline{\H}^{0}_{M}(x) $ is the \emph{Chow polynomial} 
    (or \emph{Hilbert--Poincar\'{e} series} of the Chow ring) of $M$, which we will also denote by $ \underline{\H}_{M}(x) $.

    The Chow polynomials of uniform matroids are well-studied and have numerous combinatorial formulas. The following is only a partial
    list of results. (See Section \ref{section:combinatorics} for any undefined notation.)
    \begin{thm}
        Let $U_{r,n} $ be the uniform matroid of rank $ r \geq 1 $ on ground set $[n]$. Then
        \begin{enumerate}
            
            \item \cite[Theorem 3]{hoster}
                \[ 
                    \underline{\H}_{U_{r,n}}(x) = \sum_{\substack{R \subseteq \left\{0, \dots, r-1 \right\} \\ 0 \in R}}
                    \binom{n}{\Delta R} \cdot x^{\left| R \right|-1}.
                \]
            \item \cite[Theorem 5.1]{HRS}
                \[
                    x \cdot \underline{\H}_{U_{n-1,n}}(x) = d_{n}(x).
                \]
            \item \cite[Theorem 1.9]{FMSV}
                \[
                    \underline{\H}_{U_{r,n}}(x) = \sum_{j=0}^{r-1} \binom{n}{j} \; d_{j}(x) \cdot (1 + x + \cdots + x^{r-j-1}).
                \]
            \item \cite[Equation (13)]{FMSV} 
                \[ 
                    \underline{\H}_{U_{r,n}}(x) = \sum_{j=0}^{r-1} \sum_{m=0}^{r-j-1} (-1)^{m} \, \binom{n}{j} \binom{n-j-1}{m} \,
                    A_{j}(x) \cdot x^{r-m-j-1}.
                \]
        \end{enumerate}
    \end{thm}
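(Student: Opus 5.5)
The plan is to derive all four formulas from the Corollary above, which gives the retral basis of the Chow ring. The key observation is that $\underline{\H}_{U_{r,n}}(x)$ is the generating function
\[
\underline{\H}_{U_{r,n}}(x) = \sum_{\mathscr{F} \text{ retral}} x^{|\mathscr{F}|}.
\]
This holds because the basis element $\prod_{F\in\mathscr{F}} x_F$ has degree $|\mathscr{F}|$, and chains of flats are squarefree monomials in the Stanley--Reisner ring. So each formula becomes a statement about counting retral chains of flats of $U_{r,n}$, refined by length.

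Recall that the proper non-empty flats of $U_{r,n}$ are exactly the subsets of $[n]$ of size at most $r-1$. A chain $\mathscr{F}$ is therefore a chain of subsets $F_1\subsetneq\cdots\subsetneq F_k$ with $|F_k|\le r-1$, subject to the retral condition.

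\textbf{Formula (1) (Hoster).} I would group retral chains by their rank set $R=\{0\}\cup\{\rk F : F\in\mathscr{F}\}$, so that $|R|-1=|\mathscr{F}|$. The goal is to show that the number of retral chains with rank set $R$ equals $\binom{n}{\Delta R}$. To do this, I would unwind the retral condition, which should constrain how each flat sits inside the next one relative to the rank gaps. I would then build an explicit bijection between retral chains with rank set $R$ and the set-compositions or words counted by $\binom{n}{\Delta R}$, by recording at each step which new elements enter the chain.

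\textbf{Formula (2) (HRS).} For $r=n-1$, I would build a bijection between retral chains of length $k$ and derangements of $[n]$ with $k+1$ excedances, so that $x\cdot\underline{\H}_{U_{n-1,n}}(x)=d_n(x)$. The natural approach is to read a permutation off a retral chain: the successive differences $F_{i+1}\setminus F_i$ give the cycle blocks, the retral condition rules out fixed points, and each step of the chain contributes one excedance, with one extra excedance coming from closing up the final cycle.

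\textbf{Formula (3) (FMSV).} I would decompose an arbitrary retral chain of $U_{r,n}$ into two parts:
\begin{itemize}
\item a ``derangement part'' supported on some $j$-subset of $[n]$, which is a retral chain of the Boolean-type matroid $U_{j-1,j}$ after contraction (with the shift by $x$ absorbed), and
\item a ``tail'' consisting of a saturated-type segment whose only freedom is its length, between $0$ and $r-j-1$.
\end{itemize}
The $\binom{n}{j}$ factor chooses the support, the factor $d_j(x)$ comes from Formula (2) applied to that support, and the tail contributes $1+x+\cdots+x^{r-j-1}$. This decomposition uses the induction-on-rank structure from Section~\ref{section:basis}: restricting to a flat and contracting it should preserve retrality.

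\textbf{Formula (4).} This should follow from Formula (3) by algebra alone. Substitute the classical expansion of $d_j(x)$ in terms of Eulerian polynomials $A_i(x)$ with binomial signs, or equivalently apply binomial inversion $A_j(x)=\sum_i\binom{j}{i}d_i(x)$. Then swap the order of summation and simplify the geometric sum using Vandermonde-type identities to obtain the coefficients $(-1)^m\binom{n}{j}\binom{n-j-1}{m}$.

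\textbf{Main obstacle.} I expect the hardest step to be the bijections in Formulas (2) and (3): the excedance statistic has to match the chain length exactly, and the decomposition has to be compatible with the precise definition of retral. If a direct bijection for (2) proves awkward, the fallback is to show that both sides satisfy the same recursion. For the derangement polynomials the recursion is $d_n(x) = (n-1)x\bigl(d_{n-1}(x)+d_{n-2}(x)\bigr)$ plus the derivative correction. For retral chains, I would obtain the matching recursion by conditioning on the minimal flat of the chain.
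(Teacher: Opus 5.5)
\textbf{The gap is in (2).} Your plans for (1) and (3) match the paper's. For (1), you group retral chains by rank set; the content is that only the flats at ranks $\min R_1,\dots,\min R_s$ are free, and each run of consecutive ranks is then forced. For (3), you split a retral chain at the largest $j$ with $j$ a rank of the chain but $j-1$ not. The lower part is a retral chain of the restriction to $F^{j}$ avoiding rank $j-1$ (restriction, not contraction), equivalently a retral chain of $U_{j-1,j}$. The upper part is a tail determined by its length.

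Your (3) imports $d_j(x)$ from (2), and your (4) is derived from (3), so everything after (1) rests on (2). The bijection you sketch does not work. Every covering step of a chain has a singleton difference $F_{i+1}\setminus F_i$, and the retral condition forces the form of such steps rather than excluding them. For example, in $U_{3,4}$ the retral chain $\emptyset<\{4\}<\{3,4\}$ has differences $\{4\},\{3\},\{1,2\}$. It must correspond to the unique derangement of $[4]$ with three excedances, the $4$-cycle $1\mapsto 2\mapsto 3\mapsto 4\mapsto 1$, which has nothing to do with cycles on those blocks. The fallback is also not an argument: nothing indicates why conditioning on the minimal flat would produce a recursion containing the term $x(1-x)d_{n-1}'(x)$.

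\textbf{The missing idea.} Your own decomposition for (3) already proves (2), once you state it with the unknown series $\underline{\H}_{U_{j-1,j}}(x)$ in place of $d_j(x)/x$. It gives
\[
\underline{\H}_{U_{r,n}}(x)=\sum_{j=0}^{r-1}\binom{n}{j}\,\underline{\H}_{U_{j-1,j}}(x)\,(x+x^{2}+\cdots+x^{r-j}),
\]
with the conventions $\underline{\H}_{U_{-1,0}}(x)=1/x$ and $\underline{\H}_{U_{0,1}}(x)=0$ handling $j=0,1$. Taking $r=n-1$ shows that $f_n(x)=x\,\underline{\H}_{U_{n-1,n}}(x)$ satisfies
\[
f_n=\sum_{j=0}^{n-2}\binom{n}{j}f_j\,(x+\cdots+x^{n-1-j}),\qquad f_0=1,\quad f_1=0.
\]
The derangement polynomials satisfy the same recursion with the same initial values. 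The paper cites \cite[Corollary 4.2]{JKMS} for this; it is also equivalent to $\sum_{n}d_n(x)t^n/n!=(1-x)/(e^{xt}-xe^{t})$. Hence $f_n=d_n$, and (3) follows by substitution. This is exactly the paper's argument.

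\textbf{Your route to (4) is correct and differs from the paper's.} With (3) in hand, substitute $d_j=\sum_i(-1)^{j-i}\binom{j}{i}A_i$ and use $\binom{n}{j}\binom{j}{i}=\binom{n}{i}\binom{n-i}{j-i}$. Then collect the coefficient of $A_i x^{t}$ via $\sum_{k=0}^{m}(-1)^k\binom{N}{k}=(-1)^m\binom{N-1}{m}$, with $N=n-i$ and $m=r-1-i-t$. This gives (4) exactly.

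The paper instead uses the truncation long exact sequence. Injectivity of the connecting maps yields $\underline{\H}_{U_{r,n}}=\binom{n}{r-1}A_{r-1}x+\underline{\H}_{U_{r-1,n}}-x\,\underline{\H}^{1}_{U_{r-1,n}}$. It then solves the companion recursion for the $\underline{\H}^{i}$ and reverses polynomials using palindromicity. Your derivation is shorter and purely algebraic. The paper's has the advantage that the same recurrence also yields closed formulas for all the higher refined Hodge--Poincar\'{e} polynomials.
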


    From the combinatorics of retral and weakly retral chains,
    we give combinatorial proofs for (1)--(3) and rederive (4).

    We also produce analogous formulas for the other refined Hodge--Poincar\'{e} polynomials.
    \begin{thm}\label{thm:formulasHP}
        Let $U_{r,n} $ be the uniform matroid of rank $ r \geq 1 $ on ground set $[n]$. For $ i \geq 1 $,
        \begin{enumerate}
            \item {[Theorem \ref{thm:hosterHP}]}
                \[\underline{\H}^{i}_{U_{r,n}}(x) = \sum_{\substack{ R \subseteq 
						\left\{ 0,\dots, r-1 \right\} \\ 0, r-1 \in R}}
                        \sum_{\ell=1}^{n-r-i+1} \, \binom{n}{\Delta R}\,
                        \binom{n - \lfloor R \rfloor - \ell}{r-
                        \lfloor R \rfloor -1} \binom{n-r-\ell}{i-1}
                         \cdot x^{\left| R \right| -1}.
                \]
            \item {[Theorem \ref{thm:derangementHP}]}
			    \[ 
				\underline{\H}^{i}_{U_{r,n}}(x) = \sum_{j=0}^{r-1}
                \sum_{\ell=1}^{n-r-i+1} \, \binom{n}{j} \binom{n - j- \ell}{r-j - 1} \binom{n-r-\ell}{i-1} \, d_{j}(x) \cdot x^{r-j-1}.
			    \]

            \item {[Proposition \ref{prop:recursiveRHP}]}
        		\[
                     \underline{\H}_{U_{r,n}}^{i}(x) = (-1)^{r-1}\, \binom{n-1}{i+r-1} \cdot x^{r-1} 
					 + \sum_{j=1}^{r-1}(-1)^{r+1-j} \, \binom{n}{j}\binom{n-j-1}{i+r-j-1} \,  A_{j}(x)\cdot x^{r-j}.
			    \] 

        \end{enumerate}
    \end{thm}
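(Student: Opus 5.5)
The plan is to compute $\underline{\H}^{i}_{U_{r,n}}(x)$ directly from the basis of the first theorem. Since $i \geq 1$, only the second family contributes: products $\prod_{F\in\mathscr{F}} x_F \otimes \xi$ with $\mathscr{F}$ weakly retral and $\xi$ admissible to $\mathscr{F}$, where $\xi$ has exterior degree $i$. Such an element lies in $K_i(\mathbb{Q}[\Sigma_{U_{r,n}}])_{|\mathscr{F}|+i}$, so it contributes $x^{|\mathscr{F}|}$ to $\underline{\H}^i$. Hence
\[
\underline{\H}^{i}_{U_{r,n}}(x)=\sum_{\mathscr{F}\ \text{weakly retral}} \#\{\xi \text{ admissible to } \mathscr{F},\ \deg\xi=i\}\cdot x^{|\mathscr{F}|}.
\]
All three formulas should come from organizing this sum in different ways.

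For (1), I would encode a chain $\mathscr{F}=\{F_1\subsetneq\cdots\subsetneq F_k\}$ of proper flats by its rank set. Adjoining $\emptyset$ gives $R=\{0,\operatorname{rk}F_1,\dots\}\subseteq\{0,\dots,r-1\}$, so $|R|-1=|\mathscr{F}|$. I expect weak retrality to force the top flat to be a hyperplane, which explains the condition $r-1\in R$. The flats below the top have ranks in $R\setminus\{r-1\}$, and for a uniform matroid they are arbitrary sets of those sizes. The nested choices then give $\binom{n}{\Delta R}$, with the last block of $\Delta R$ accounting for the passage to the hyperplane.

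The remaining factor counts the top hyperplane together with the admissible $\xi$. Here I anticipate that the admissible $\xi$ are wedges $\ell_{a_1}\wedge\cdots$ (or differences $\ell_a-\ell_b$) indexed by $i$ elements outside the hyperplane, with a "minimal element" convention. Call that minimal element $\ell$, and let $\lfloor R\rfloor$ be the largest element of $R$ below $r-1$. The factor $\binom{n-\lfloor R\rfloor-\ell}{r-\lfloor R\rfloor-1}$ then chooses the rest of the hyperplane among elements beyond position $\ell$, and $\binom{n-r-\ell}{i-1}$ chooses the other $i-1$ indices of $\xi$ among the remaining complement elements larger than $\ell$. The main obstacle is matching the precise definitions of weakly retral and admissible to this bijection, in particular exactly where the index $\ell$ lives. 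I would first check the bijection against small cases such as $U_{2,4}$, using the Betti diagram.

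For (2), I would refine (1) by grouping according to $j=\lfloor R\rfloor$, the rank of the flat just below the hyperplane. The lower part of $R$, namely $R\cap\{0,\dots,j\}$ containing $0$ and $j$, sums via formula (1) of the Chow theorem, applied as in the combinatorial proof of the FMSV derangement identity (3), to $\binom{n}{j}d_j(x)$. The gap to $r-1$ contributes $x^{r-j-1}$ after accounting for the hyperplane. Alternatively, and more cleanly, I would decompose a weakly retral chain as a retral-type chain up to $F$ of rank $j$, followed by the hyperplane and $\xi$, using the known derangement interpretation of the Chow basis.

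For (3), I would substitute the derangement–Eulerian relation $d_j(x)=\sum_m (-1)^{j-m}\binom{j}{m}A_m(x)$, or equivalently use the recursion relating $\underline{\H}$ to Eulerian polynomials as the paper does for formula (4) of the Chow theorem. After substituting, I would swap the sums and collapse the $\ell$-sum by Vandermonde/hockey-stick identities into $\binom{n-j-1}{i+r-j-1}$. The $j=0$ term, with $A_0=1$, then yields the isolated $\binom{n-1}{i+r-1}x^{r-1}$ term. The binomial manipulation is routine but sign-heavy; I would verify it first at $r=1,2$.
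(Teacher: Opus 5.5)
Your framework for (1) matches the paper's: for $i\ge 1$ only the classes $x_{\mathscr{F}}\otimes\xi$ contribute, each with weight $x^{|\mathscr{F}|}$, sorted by $R=\rks(\mathscr{F})$. But the bijection you sketch is wrong, and that bijection is the whole content of (1).

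The flats below the hyperplane are not arbitrary. Weak retrality forces every covering step $F^{p-1}\subset F^{p}$ with $p<r-1$ to add the largest element of $F'\setminus F^{p-1}$, where $F'$ is the next flat of the chain above $F^{p}$. So within each maximal run $R_k$ of consecutive ranks, the added elements decrease and are determined by $F^{\min R_k}$ and the flat above the run. The only free data are the chain $\mathscr{G}$ of block-minimum flats $G^{\min R_1}<\cdots<G^{\min R_s}$, which is all that $\binom{n}{\Delta R}$ counts, and the set $S=H\setminus G^{\lfloor R\rfloor}$. Also, $\lfloor R\rfloor=\min R_s$ is the bottom of the saturated run $\{\lfloor R\rfloor,\dots,r-1\}$ ending at the hyperplane $H$, not the largest element of $R$ below $r-1$.

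The correct count (Lemma~\ref{lem:rankSelectedWeaklyRetral}) goes as follows. Let $u_1$ be the $\ell$th smallest element of $[n]\setminus G^{\lfloor R\rfloor}$, so $\ell$ is a position, not the element. Weak retrality forces $S$ to be added in decreasing order, so admissibility condition (3) becomes $u_1<\min S$; this gives $\binom{n-\lfloor R\rfloor-\ell}{r-\lfloor R\rfloor-1}$. Then $u_2<\cdots<u_i$ are chosen among the $n-r-\ell$ non-maximal elements of $[n]\setminus H$ exceeding $u_1$, with each $v_m$ the successor of $u_m$ in $[n]\setminus H$. Your description would admit, for instance, $x_{1}x_{13}\otimes(\ell_2-\ell_4)$ in $U_{3,4}$. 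This is not a basis element, because $\emptyset<1<13$ is not weakly retral; the true count for $R=\{0,1,2\}$, $i=1$ is $3$.

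For (2), once $\lfloor R\rfloor$ is read correctly your regrouping is the paper's argument. Here $j=\lfloor R\rfloor$ is the largest $j\in\rks(\mathscr{F})$ with $j-1\notin\rks(\mathscr{F})$. For $j\ge 2$ one has $\binom{n}{\Delta R}=\binom{n}{j}\binom{j}{\Delta R'}$ with $R'=R\cap\{0,\dots,j-2\}$, and the final run contributes $x^{r-j}$. Hoster's formula together with $x\,\underline{\H}_{U_{j-1,j}}(x)=d_j(x)$ then gives $\binom{n}{j}d_j(x)x^{r-j-1}$; $j=0$ gives $x^{r-1}$, and $j=1$ cannot occur. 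Grouping instead by the rank of the flat just below $H$, as you wrote, does not produce $d_j$.

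For (3), the substitution plan fails.
\begin{enumerate}
\item After inserting $d_j=\sum_m(-1)^{j-m}\binom{j}{m}A_m$ into (2), each $A_m$ with $m\ge 1$ is multiplied only by $x^{r-j-1}$ with $j\ge m$, hence by degree at most $r-m-1$. Formula (3) instead attaches $x^{r-m}$ to $A_m$.
\item The $\ell$-sum is not a Vandermonde convolution and does not collapse to $\binom{n-j-1}{i+r-j-1}$. For $r=2$, $n=4$, $i=1$, $j=0$ it equals $5$, not $3$.
\end{enumerate}
The two formulas agree only through relations among the polynomials $x^kA_m(x)$; already for $r=2$ you need $A_1=A_0$. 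No binomial identity supplies these.

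The paper gets (3) from the truncation sequence instead. Since $\delta_i$ is injective for all $i\ge 1$ (Lemma~\ref{lem:initInjective}), $H_i(K(\mathbb{Q}[\Sigma_{U_{r,n}}]))_{j+1}\cong\coker\delta_{i+1}$. K\"{u}nneth on the stars together with $\underline{\H}_{U_{r-1,r-1}}=A_{r-1}$ then yields the recurrence $\underline{\H}^i_{U_{r,n}}(x)=x\binom{n}{r-1}\binom{n-r}{i}A_{r-1}(x)-x\,\underline{\H}^{i+1}_{U_{r-1,n}}(x)$. Unrolling it from $\underline{\H}^i_{U_{1,n}}=\binom{n-1}{i}$ gives (3), with exactly one term $A_jx^{r-j}$ per step. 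The ``recursion'' you mention in passing would have to be this one. It is not equivalent to your substitution, and it requires the injectivity of the connecting maps and the star computation.
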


    \subsection{Lefschetz properties in the singular cohomology ring of a uniform matroid}
    One of the miraculous properties of the Chow ring of a matroid is that it acts like the cohomology 
    ring of a smooth projective variety even though the variety $ X_{\Sigma_{M}} $ is often non-proper
    and thus only quasi-projective. Specifically, Adiprasito, Huh, and Katz \cite{AHK} 
    proved that $ A^{\bullet}(X_{\Sigma_{M}}) $ satisfies the \emph{K\"{a}hler package} of Poincar\'{e} duality, 
    Hard Lefschetz, and the Hodge--Riemann relations. A natural question is the following.

    \begin{question}
        Does the singular cohomology ring of a matroid satisfy anything like the K\"{a}hler package?
    \end{question}

    Unfortunately, the K\"{a}hler package itself quickly fails for the singular cohomology rings of matroids. 
    By \cite[Theorem 6.6]{binder}, the Betti diagram for the singular cohomology of a simple rank $r$ matroid $M$ on 
    ground set $ [n] $ takes the form 
    \vspace*{1em}
    \begin{center}
        \begin{tabular}{c|ccccc}
            & $0$ & $1$ & $ \cdots $ & $ n-r-1$ & $ n-r $ \\ \hline
            $0$ & $1$ & $0$ & $ \cdots $ & $ 0$ & $0$ \\ 
            $1$ & * & * & $ \cdots $ & * & * \\
            $\vdots $ & $ \vdots $ & $ \vdots $& $ \ddots$& $ \vdots $ & $ \vdots $ \\
            $ r-2 $ & * & * & $ \cdots $ & *&* \\
            $ r-1$ & $1$ & * & $ \cdots $ & * & $ \mu(M) $
        \end{tabular}
    \end{center}
    \vspace*{1em}
    where asterisks indicate the locations of potentially non-zero entries. The dimension of
    the top-degree singular cohomology is $ \mu(M) $, the 
    \emph{M\"{o}bius invariant} of $M$. For loopless matroids, $ \mu(M) \geq 1 $, and often this inequality 
    is strict. This immediately implies that Poincar\'{e} duality fails in general, as the dimension the top-degree cohomology is not 1.  
    Hard Lefschetz, which implies Poincar\'{e} duality,
    thereby fails as well. Finally, as there is no degree map on the top-degree cohomology, we cannot even define the Hodge--Riemann relations.

    While there does not appear to be a way to salvage the Poincar\'{e} duality and Hodge--Riemann relations aspects of the K\"{a}hler 
    package, there are weaker Lefschetz properties which the singular cohomology ring may satisfy.
    We show in Section \ref{section:lefschetz} that the singular cohomology ring of a uniform matroid satisfies 
    what we call the \emph{quasi-projective strong Lefschetz property}.

    \begin{thm}
        Let $ \ell \in \Gr_{2}^{W}H^{2}(X_{\Sigma_{U_{r,n}}}) $ be generic. Then the map
        \[ 
            \times \ell^{d} \colon \Gr_{j}^{W}H^{k}(X_{\Sigma_{U_{r,n}}}) \to
            \Gr_{j+2d}^{W}H^{k+2d}(X_{\Sigma_{U_{r,n}}})
        \]
        is injective for $ 0 \leq d \leq r + j-2k-1 $ and surjective for $ d > r + j-2k-1 $.
    \end{thm}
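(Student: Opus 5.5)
The plan is to reduce the statement, column by column in the Koszul grading, to Hard Lefschetz for Chow rings of smaller matroids \cite{AHK}.

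\emph{Reindexing.} Under $H_{i}(K(\mathbb{Q}[\Sigma_{U_{r,n}}]))_{j} \cong \Gr_{2j}^{W}H^{2j-i}$, the weight $2j$ and degree $k = 2j-i$ give
\[ r + 2j - 2k - 1 = (r-1) - 2m, \qquad m := j - i. \]
Also, $\ell \in \Gr_{2}^{W}H^{2} = A^{1}(X_{\Sigma_{U_{r,n}}})$ has Koszul bidegree $(0,1)$. So the theorem says the following. Fix $i$ and set $V^{m} = H_{i}(K)_{i+m}$, whose Hilbert series is $\underline{\H}^{i}_{U_{r,n}}(x)$. Then the map $\ell^{d}\colon V^{m}\to V^{m+d}$ is injective for $m+d \le (r-1)-m$ and surjective otherwise. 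This is a strong Lefschetz property for the $A^{\bullet}$-module $V$ centered at $(r-1)/2$.

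It suffices to show that $\ell^{r-1-2m}\colon V^{m}\to V^{r-1-m}$ is an isomorphism for every $m \le (r-1)/2$.
\begin{itemize}
\item Injectivity for smaller $d$ follows because $\ell^{d}$ is a factor of $\ell^{r-1-2m}$.
\item Surjectivity for larger $d$ follows by writing $\ell^{d} = \ell^{d'}\circ \ell^{r-1-2m'}$ with $m' = m+d-(r-1-m+d)+\dots$, that is, by factoring through the symmetric isomorphism landing in $V^{m+d}$.
\end{itemize}
For $i=0$, $V$ is the Chow ring, which lives in degrees $0,\dots,r-1$, and the claim is exactly Hard Lefschetz from \cite{AHK}.

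\emph{The case $i\ge 1$.} Here I use the basis of weakly retral chains with admissible $\xi$, together with formula (2) of Theorem \ref{thm:formulasHP}. That formula writes $\underline{\H}^{i}$ as a nonnegative combination of $d_{j}(x)\, x^{r-j-1}$. Recall that $x\,\underline{\H}_{U_{j-1,j}}(x) = d_{j}(x)$ by \cite{HRS}, and $d_{j}$ is palindromic on degrees $1,\dots,j-1$. So each summand is the Hilbert series of a shifted Chow ring of a Boolean-type matroid, centered at exactly $(r-1)/2$.

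The plan is therefore to construct a filtration of $V$ by $A^{\bullet}$-submodules and identify its graded pieces. I would filter by the data ``top part'' of the weakly retral chain $\mathscr{F}$ together with $\xi$, ordered by the rank of the top flat and the Gröbner order used in Section \ref{section:basis}. The goal is that each graded piece is isomorphic, as an $A^{\bullet}(X_{\Sigma_{U_{r,n}}})$-module, to
\[ A^{\bullet}(\text{matroid of a flat } F \text{ of rank } j) \otimes x^{r-j}, \]
with $A^{\bullet}(X_{\Sigma_{U_{r,n}}})$ acting through the restriction map $x_{G}\mapsto x_{G}$ for $G\subseteq F$ and $x_{G}\mapsto 0$ otherwise, modulo the linear relations. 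Concretely, multiplying a basis element $\prod x_{F}\otimes\xi$ by flats below the bottom of the chain should behave like the Chow ring of the restriction, and the product with other $x_{G}$ should drop to lower filtration level.

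Each graded piece then has Hard Lefschetz for the image of $\ell$. The restriction of an ample class of $\Sigma_{U_{r,n}}$ to the star or restriction fan is ample, and the pieces all share the center $(r-1)/2$. Consequently $\ell^{r-1-2m}$ induces isomorphisms on the associated graded of a finite filtration, hence is an isomorphism on $V^{m}\to V^{r-1-m}$.

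\emph{Genericity.} For each of the finitely many pairs $(i,m)$, the set of $\ell$ making these maps isomorphisms is Zariski open in $A^{1}$. It is nonempty because it contains any strictly convex piecewise linear (ample) class, so a generic $\ell$ works.

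\emph{Main obstacle.} The hard step is the filtration: proving that the $\ell$-action, or more generally the $A^{1}$-action, on the weakly retral basis is triangular with diagonal blocks genuinely isomorphic to the shifted Chow rings above. Matching Hilbert series via formula (2) is only a consistency check. I would first try to realize the pieces via the long exact sequences used in the rank induction of Section \ref{section:basis}. These are deletion or star-restriction sequences, which are $A^{\bullet}$-linear, and I would check that the connecting maps preserve the Lefschetz center.
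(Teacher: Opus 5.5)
\textbf{The gap is in your reformulation.} The theorem is deliberately asymmetric. With $V^m=H_i(K(\mathbb{Q}[\Sigma_{U_{r,n}}]))_{i+m}$, it asserts that $\ell^d\colon V^m\to V^{m+d}$ is injective for $d\le r-1-2m$, but surjective only for $d\ge r-2m$. At $d=r-1-2m$ only injectivity is claimed.

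Your reduction to ``$\ell^{r-1-2m}\colon V^m\to V^{r-1-m}$ is an isomorphism'' would force $V$ to be palindromic about $(r-1)/2$. That fails for $i\ge 1$ already in the paper's examples:
\begin{itemize}
\item For $U_{3,4}$ and $i=1$, the basis gives $\dim V^0=0$, $\dim V^1=6$, $\dim V^2=3$, so $\ell^2\colon V^0\to V^2$ is not onto.
\item For $U_{4,5}$ and $i=1$, $\dim V^1=10\neq 30=\dim V^2$.
\end{itemize}

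The computation you offer in support is also off. The summand $d_j(x)\,x^{r-j-1}=x^{r-j}\,\underline{\H}_{U_{j-1,j}}(x)$ lives in degrees $r-j,\dots,r-2$, or in degree $r-1$ when $j=0$. It is therefore centered at $r-1-j/2$, which equals $(r-1)/2$ only when $j=r-1$.

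So the filtration strategy fails even if you overcame your ``main obstacle.'' The pieces indexed by $j=0$ are concentrated in degree $r-1$. For $r\ge 3$, the associated graded of $\ell\colon V^{r-2}\to V^{r-1}$ is zero onto such a piece. An argument of the form ``each graded piece is Lefschetz, hence so is $V$'' therefore cannot yield even the true surjectivity half of the theorem, let alone the isomorphisms you aim for.

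\textbf{The missing idea.} Injectivity and surjectivity are governed by two different Lefschetz centers, $(r-1)/2$ and $r/2$. The paper obtains them by sandwiching $V$ between sums of star cohomologies via the truncation sequence \eqref{eq:truncationLES}, rather than by filtering $V$.

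\emph{Injectivity ($i\ge 1$).} Write $\ell$ as the pullback of a generic $\hat\ell$ from $\Sigma_{U_{r+1,n}}$. The connecting map for $U_{r+1,n}\to U_{r,n}$ is injective by Lemma~\ref{lem:initInjective}. It embeds $V$ compatibly with $\ell$ into $\bigoplus_{H\in\mathscr{H}(U_{r+1,n})}H_{i-1}(K(\mathbb{Q}[\overline{\Star}(\rho_H)]))$, preserving the refined degree $m$. These stars have the cohomology of a permutohedral variety of dimension $r-1$ times a torus, whose Lefschetz center is $(r-1)/2$.

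\emph{Surjectivity.} The map $\delta_i$ out of $U_{r-1,n}$ is injective for $i\ge1$. Hence $\bigoplus_{H\in\mathscr{H}(U_{r,n})}H_{i}(K(\mathbb{Q}[\overline{\Star}(\rho_H)]))\to V$ is onto, compatible with $\ell$, and raises the refined degree by one. These stars are permutohedral varieties of dimension $r-2$ times a torus, with center $(r-2)/2$, which the shift moves to $r/2$.

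Lemma~\ref{lem:lefschetzPermutohedral} applied on each side gives exactly the bounds $d\le r-1-2m$ and $d\ge r-2m$. The case $i=0$ is Hard Lefschetz for the Chow ring, as you correctly note.
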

    Our proof uses the construction of our basis for the singular cohomology ring to deduce this
    Lefschetz property from the Hard Lefschetz property of the permutohedral variety.

    This strong Lefschetz property also has an interesting combinatorial implication: Each refined Hodge--Poincar\'{e}
    polynomial $ \underline{\H}_{U_{r,n}}^{i}(x) $ has unimodal coefficients. In fact, using the formulas from
    Theorem \ref{thm:formulasHP}, we have verified that the polynomials $\underline{\H}^{i}_{U_{r,n}}(x) $ have ultra-log-concave coefficients
    for $ n \leq 230 $. This suggests these polynomials may be real-rooted.

    \subsection*{Acknowledgements}
    This work was supported by NSF RTG grants DMS-2231492 and DMS-2231565. We thank Christin Bibby and Eric Katz for helpful conversations.

    \section{Background and notation}\label{section:notation}
    We assume that the reader is familiar with matroids and toric varieties, and we mostly set notation here. We refer to 
    \cite{oxley} and \cite{CLS}, respectively, for further details.

    \subsection{Matroids and Bergman fans}
     Let $M$ be a loopless matroid of rank $ r $ on ground set $ [n] $.
     We write $ \mathscr{L}(M) $ for the 
     \emph{lattice of flats} of $M$. A \emph{proper flat} is flat of rank at most $ r-1 $,  
     and a \emph{hyperplane} is flat of rank exactly $ r-1 $. We write $ \mathscr{H}(M) $ for the 
     set of hyperplanes of $ M $. For a flat $ F \in \mathscr{L}(M) $, we write the \emph{restriction
     of $ M $ to $ F $} as $ M^{F} $. 

     Throughout this paper we will mostly be restricting ourselves to uniform matroids.
     For integers $ 1 \leq r \leq n $, we write $U_{r,n} $ for the uniform matroid of rank $r$ on 
     ground set $[n] $. 

     Associated to $ M $ is a rational fan in $ \mathbb{R}^{n}/ \mathbb{R} \cdot (1,\dots, 1) $
     called the \emph{Bergman fan}, $\Sigma_{U_{r,n}} $. 
     To construct it, let $ N $ be the lattice $ \mathbb{Z}^{n}/ \mathbb{Z} \cdot (1,\dots,1) $, and let
     $ e_{i} $ be the image of the $i$th basis vector of $ \mathbb{Z}^{n} $ in $N$. For a proper, non-empty flat
     $ F \in \mathscr{L}(M) $, define $ e_{F} = \sum_{i \in F} e_{i} $. The rays of 
     $ \Sigma_{M} $ are given by $ \rho_{F} = \pos(e_{F}) $ for each proper, non-empty flat $F$. 
     The higher-dimensional cones in $ \Sigma_{M} $ are given by chains of flats in $ \mathscr{L}(M) $:
     $ \pos(\rho_{F_{1}}, \dots, \rho_{F_{s}}) \in \Sigma_{M} $ if and only if (up to reordering)
     $ F_{1} < \cdots < F_{s} $. Figure \ref{fig:BergmanFan} depicts $ \Sigma_{U_{3,3}} $.

          \begin{figure}
        \begin{tikzpicture}
            \filldraw[gray]  (2,0) -- (1.45, 1.45) -- (0,2) -- (-2,0) -- (-1.45,-1.45) -- (0,-2) -- cycle;
            \draw[->] (0,0) -- (2.2,0) node[anchor = west] {$\rho_{1}$};
            \draw[->] (0,0) -- (1.6,1.6) node[anchor = south west] {$\rho_{12}$};
            \draw[->] (0,0) -- (0,2.2) node[anchor = south] {$\rho_{2}$};
            \draw[->] (0,0) -- (-2.2,0) node[anchor = east] {$\rho_{23}$};
            \draw[->] (0,0) -- (-1.6,-1.6) node[anchor = north east] {$\rho_{3}$};
            \draw[->] (0,0) -- (0,-2.2) node[anchor = north] {$\rho_{13}$};
        \end{tikzpicture}
        \caption{The Bergman fan of $U_{3,3}$.}\label{fig:BergmanFan}
     \end{figure}

    \subsection{Singular cohomology of smooth toric varieties}
     Let $N$ be a lattice and $ N_{\mathbb{R}} = N \otimes \mathbb{R} $. Given a unimodular fan
     $ \Sigma \subseteq N_{\mathbb{R}} $, we write $ X_{\Sigma} $ for the associated smooth toric 
     variety defined over $ \mathbb{C} $. We denote the character lattice of $ X_{\Sigma} $ by  $N^{\vee} $
     and define $ N^{\vee}_{\mathbb{Q}} = N^{\vee} \otimes \mathbb{Q} $. 

    The singular cohomology of $ X_{\Sigma} $ (with rational coefficients)
    has a description in terms of the Stanley--Reisner ring of $ \Sigma $ and Tor algebras.

    \begin{definition}
        The \emph{Stanley--Reisner ring} of $ \Sigma $ is the quotient ring
        \[ 
            \mathbb{Q}[\Sigma] = \frac{\mathbb{Q}[x_{\rho} : \textrm{$ \rho $ a ray in $ \Sigma$}]}
                {(x_{\rho_{1}} \cdots x_{\rho_{s}} : \pos(\rho_{1}, \dots, \rho_{s}) \notin \Sigma).}
        \]
    \end{definition}
    The rational embedding $ \Sigma \subseteq N_{\mathbb{R}} $ endows $ \mathbb{Q}[\Sigma] $ with the 
    structure of a $ \Sym(N_{\mathbb{Q}}^{\vee}) $-algebra via the map
    \[ 
        N^{\vee} \longrightarrow \mathbb{Q}[\Sigma]\;\;\;\;\;\;\; m \longmapsto \sum_{\textrm{$ \rho $ ray}} 
        \langle m, u_{\rho} \rangle \cdot x_{\rho},
    \]
    where $ \langle -,- \rangle $ is the pairing $ N^{\vee} \otimes N $ and $ u_{\rho} \in N  $ is the primitive 
    generator of the ray $ \rho $.

    \begin{thm}[\cite{franzRing}]
        If $ X_{\Sigma} $ is a smooth toric variety, there is a natural ring isomorphism
        \[ 
            H^{\bullet}(X_{\Sigma}, \mathbb{Q}) \cong \Tor_{\bullet}^{\Sym(N_{\mathbb{Q}^{\vee}})}
            \left( \mathbb{Q}[\Sigma], \mathbb{Q}\right)_{\bullet},
    \]
        where $ \mathbb{Q} $ is the residue field of the maximal homogeneous ideal.
    \end{thm}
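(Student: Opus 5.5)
The plan is to realize $X_{\Sigma}$ as the fibre of its Borel construction and to compute its cohomology from the equivariant cohomology with an Eilenberg--Moore argument. Write $T$ for the torus of $X_{\Sigma}$ and $X_{T} = ET \times_{T} X_{\Sigma}$. There is a fibration $X_{\Sigma} \to X_{T} \to BT$, and $H^{\bullet}(BT,\mathbb{Q}) = \Sym(N^{\vee}_{\mathbb{Q}})$ with the characters in degree $2$. The first step is to identify $H^{\bullet}_{T}(X_{\Sigma},\mathbb{Q})$ with $\mathbb{Q}[\Sigma]$ as $\Sym(N^{\vee}_{\mathbb{Q}})$-algebras.

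\textbf{Step 1: equivariant cohomology.} Cover $X_{\Sigma}$ by the $T$-stable affine opens $U_{\sigma}$, $\sigma \in \Sigma$. By smoothness each $U_{\sigma} \simeq_{T} T/T_{\sigma}$ up to equivariant homotopy, with $T_{\sigma}$ the subtorus spanned by $\sigma$. Hence $H_{T}^{\bullet}(U_{\sigma}) = \Sym(\spann(\sigma)^{\vee}_{\mathbb{Q}})$, the polynomial functions on $\sigma$.

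A Mayer--Vietoris / homotopy-colimit argument over the face poset identifies $H^{\bullet}_{T}(X_{\Sigma})$ with the ring of piecewise polynomial functions on $\Sigma$. The vanishing of odd cohomology needed to make that spectral sequence degenerate comes from the freeness of the local pieces. For a unimodular fan the ring of piecewise polynomials is $\mathbb{Q}[\Sigma]$: $x_{\rho}$ is the Courant function of $\rho$. The $\Sym(N^{\vee}_{\mathbb{Q}})$-structure sends $m$ to the global linear function $m$, which equals $\sum_{\rho}\langle m,u_{\rho}\rangle x_{\rho}$.

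\textbf{Step 2: formality.} Applying Eilenberg--Moore to the pullback square with $ET \to BT$ (valid since $BT$ is simply connected) gives a spectral sequence
\[ E_{2} = \Tor^{\Sym(N^{\vee}_{\mathbb{Q}})}_{\bullet}(\mathbb{Q}[\Sigma],\mathbb{Q}) \Longrightarrow H^{\bullet}(X_{\Sigma},\mathbb{Q}). \]
To get both collapse and a \emph{ring} isomorphism rather than an associated graded, I would prove a formality statement. The goal is a zig-zag of quasi-isomorphisms of commutative dg algebras over $A_{PL}(BT)$ between Sullivan's $A_{PL}(X_{T})$ and $\mathbb{Q}[\Sigma]$ with zero differential, where $A_{PL}(BT)$ is itself formal, quasi-isomorphic to $\Sym(N^{\vee}_{\mathbb{Q}})$.

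I would build this locally: each $A_{PL}((U_{\sigma})_{T})$ is formal over $BT$, being modelled on $\Sym(\spann(\sigma)^{\vee}_{\mathbb{Q}})$. These local formality maps can be chosen compatibly with restriction to faces, because the polynomial rings are free and restrictions are surjective. One then glues via the homotopy limit over the face poset of $\Sigma$. Since the diagram $\sigma \mapsto \Sym(\spann(\sigma)^{\vee})$ has limit $\mathbb{Q}[\Sigma]$ with vanishing higher derived limits, the gluing yields the global formality.

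\textbf{Step 3: conclusion.} With formality in hand, the derived tensor product $A_{PL}(X_{T}) \otimes^{L}_{A_{PL}(BT)} \mathbb{Q}$ models $A_{PL}(X_{\Sigma})$, by Eilenberg--Moore at the cochain level. It is quasi-isomorphic, as a cdga, to $\mathbb{Q}[\Sigma] \otimes^{L}_{\Sym(N^{\vee}_{\mathbb{Q}})} \mathbb{Q}$. I would compute the latter by the Koszul resolution of $\mathbb{Q}$, giving exactly the Koszul complex $\mathbb{Q}[\Sigma]\otimes\bigwedge^{\bullet}N^{\vee}_{\mathbb{Q}}$ from the introduction. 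Taking cohomology gives the natural ring isomorphism with $\Tor$, and naturality follows since every step is functorial in toric morphisms.

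\textbf{Main obstacle.} The hard step is Step 2: the multiplicative, strictly commutative gluing of local formality maps. Commutative models are not homotopy-invariant enough to glue naively. I would first try to work with $E_{\infty}$ or strongly homotopy commutative algebra maps, as in Franz's approach. The aim would be to show that the Koszul complex computes the cup product on $\Tor$, using that the resolution $\bigwedge^{\bullet}N^{\vee}_{\mathbb{Q}}\otimes\Sym$ is itself a commutative dg algebra.
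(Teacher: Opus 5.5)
The paper does not prove this statement; it quotes it from \cite{franzRing}. Franz's theorem is designed to hold for coefficient rings well beyond $\mathbb{Q}$, where no strictly commutative cochain model exists. That is why his argument works with strongly homotopy commutative, rather than strictly commutative, cochain models.

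Your route is the classical rational one, and it is a genuinely different proof. You identify $H^\bullet_T(X_\Sigma)$ with $\mathbb{Q}[\Sigma]$, prove formality of the Borel construction relative to $BT$ using Sullivan's $A_{PL}$, and apply the cochain-level Eilenberg--Moore theorem. You then resolve $\mathbb{Q}$ by the Koszul cdga $\Sym(N^\vee_{\mathbb{Q}})\otimes\bigwedge^\bullet N^\vee_{\mathbb{Q}}$. This is a relative Sullivan algebra, so tensoring with it computes the derived tensor product as a cdga and yields exactly the paper's Koszul complex, together with its product. The argument is correct. Since the paper only needs rational coefficients, it is also the more economical proof; what Franz's machinery buys is statements with integral and finite-characteristic coefficients.

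The step you flag as the main obstacle is not an obstacle over $\mathbb{Q}$, and you do not need $E_\infty$ or shc maps.
\begin{itemize}
\item \textbf{Gluing.} $A_{PL}$ turns a union of sub-simplicial sets into a limit and inclusions into surjections. So the diagram of $A_{PL}$'s of the pieces has surjective matching maps, and its limit is already a homotopy limit. The same holds for $\sigma\mapsto\Sym(\spann(\sigma)^\vee_{\mathbb{Q}})$, because each map $\mathbb{Q}[\sigma]\to\mathbb{Q}[\partial\sigma]$ is surjective. This surjectivity, not the vanishing of odd cohomology, is what collapses the spectral sequence in your Step 1. Step 1 is in any case a by-product of Step 2.
\item \textbf{Compatible local formality maps.} These are cleanest in the Davis--Januszkiewicz model $\bigcup_\sigma(\mathbb{CP}^\infty)^{\sigma(1)}\times\{*\}^{\Sigma(1)\setminus\sigma(1)}\subseteq(\mathbb{CP}^\infty)^{\Sigma(1)}$. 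This is equivalent to $X_T$, since both are homotopy colimits of $\sigma\mapsto BT_\sigma$ over the face poset. Fix one cocycle $c\in A^2_{PL}(\mathbb{CP}^\infty)$ and send $x_\rho\mapsto\mathrm{pr}_\rho^*c$. On a piece not involving $\rho$, this cocycle is pulled back from a point, where $A^2_{PL}=0$, so compatibility holds on the nose. In the open-cover model, freeness and surjective restrictions alone are not quite enough: extending cocycles across each cone also needs a cohomological surjectivity check.
\item \textbf{Compatibility with $BT$.} This only has to hold up to homotopy. $\Sym(N^\vee_{\mathbb{Q}})$ is a Sullivan algebra with zero differential generated in degree $2$, so maps out of it are determined up to homotopy by their effect on $H^2$.
\end{itemize}

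Finally, your claim that every step is functorial in toric morphisms is too quick. The zig-zag involves choices, so naturality is evident only for maps compatible with those choices, such as inclusions of subfans. That is all the paper uses (the truncation sequence and closed stars). A toric morphism that does not send rays to rays needs an extra argument.
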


    The singular cohomology has the additional structure of the \emph{weight filtration}
    \[ 
        0 = W_{i-1} H^{i}(X_{\Sigma}) \subseteq W_{i}H^{i}(X_{\Sigma}) \subseteq \cdots \subseteq 
        W_{2i-1}H^{i}(X_{\Sigma}) \subseteq W_{2i}H^{i}(X_{\Sigma}) = H^{i}(X_{\Sigma}).
    \]
    By \cite{totaro}, the weight filtration splits, and by \cite{weberWeights}, the associated gradeds correspond to 
    the bigrading of the Tor algebra
    \[ 
        \Gr_{2j}^{W}H^{2j-i}(X_{\Sigma}) \cong \Tor_{i}^{\Sym(N_{\mathbb{Q}}^{\vee})}(\mathbb{Q}[\Sigma], \mathbb{Q})_{j}.
    \]
    We call the dimensions of the associated gradeds \emph{Hodge numbers}. 

    One obtains a combinatorial description of the singular cohomology by computing the Tor algebra from the Koszul resolution
    of $ \mathbb{Q} $. Let $ \mathbb{Q}[\Sigma]_{i} $ denote the space of degree $i$ homogeneous forms.
    \begin{definition}
        The \emph{Koszul complex} of $ \mathbb{Q}[\Sigma] $ is the bigraded differential algebra
        \[ 
            K_{i}(\mathbb{Q}[\Sigma])_{j} = \mathbb{Q}[\Sigma]_{j-i} \otimes_{\mathbb{Q}} \bigwedge^{i} N_{\mathbb{Q}}^{\vee}
        \]
        with differential $ d $ defined by the Leibniz rule, $ d(x_{\rho}) = 0 $, and $ d(m) = \sum_{\textrm{$\rho $ ray}}
        \langle m, u_{\rho} \rangle \cdot x_{\rho} $. The algebra structure is induced from the algebra structure on
        $ \mathbb{Q}[\Sigma] $ and $ \bigwedge^{\bullet} N_{\mathbb{Q}}^{\vee} $.

        Taking homology of the Koszul complex yields \emph{Koszul homology} $H_{\bullet}(K(\mathbb{Q}[\Sigma]))_{\bullet} $, and
        we have isomorphisms of graded rings
        \[ 
            \Gr^{W}_{2j}H^{2j-i}(X_{\Sigma}) \cong \Tor_{i}^{\Sym(N_{\mathbb{Q}}^{\vee})}(\mathbb{Q}[\Sigma], \mathbb{Q})_{j} 
            \cong H_{i}(K(\mathbb{Q}[\Sigma]))_{j}.
        \]
    \end{definition}

    \subsection{Singular cohomology of matroids}

    The main object of our paper is the following.

    \begin{definition}
        For a loopless matroid $M$, the \emph{singular cohomology ring} of $M$ is the singular cohomology ring
        of the toric variety $ X_{\Sigma_{M}} $.
    \end{definition}

        We use Koszul homology to work with these cohomology rings, and we adopt some notation for our purposes.

        The indeterminates of $ \mathbb{Q}[\Sigma_{M}] $ are in bijection with the proper, non-empty flats in 
        $ \mathscr{L}(M) $. For a flat $ F $ we write $ x_{F} $ for the corresponding indeterminate.
        The square-free monomials in $ \mathbb{Q}[\Sigma_{M}] $ are in bijection with chains of proper, non-empty flats in
        $ \mathscr{L}(M) $. For such a chain $ \mathscr{F} $, we define
        $ x_{\mathscr{F}} = \prod_{F \in \mathscr{F}} x_{F} $. Often we will want to consider chains containing the empty flat,
        so by convention we define $ x_{\emptyset} = 1 $ and extend the definition of $ x_{\mathscr{F}} $ to chains of proper flats.

        For $ \Sigma_{M} $, $ N_{\mathbb{Q}}^{\vee} $ is spanned by elements $ \ell_{i} - \ell_{j} $ for 
        $ i,j \in [n] $, where the $\ell_{i} $ form  a dual basis to the canonical vectors $ e_{i} \in \mathbb{Z}^{n} $.

        We now give some explicit examples of computations in the Koszul complex.
        \begin{example}\label{ex:U1n}
         For $ U_{1,n} $, $ \mathbb{Q}[\Sigma_{U_{1,n}}] \cong \mathbb{Q} $. Therefore
         $ K_{\bullet}(\mathbb{Q}[\Sigma_{U_{1,n}}]) \cong \bigwedge^{\bullet} N_{\mathbb{Q}}^{\vee} $ with trivial differential,
         and $H_{\bullet}(\mathbb{Q}[\Sigma_{U_{1,n}}]) \cong \bigwedge^{\bullet} N_{\mathbb{Q}}^{\vee} $.
        \end{example}

        \begin{example}
            For $ U_{2,3} $, $ \mathbb{Q}[\Sigma_{U_{2,3}}] \cong \frac{\mathbb{Q}[x_{1}, x_{2}, x_{3}]}{x_{1} x_{2}, x_{1} x_{3}, x_{2}x_{3}} $.
            By \cite[Lemma A.5]{binder}, we can restrict to the quasi-isomorphic \emph{square-free Koszul complex} with generators
            \[ 
                1, x_{1}, x_{2}, x_{3}, (\ell_{1}- \ell_{2}), (\ell_{2}-\ell_{3}), (\ell_{1} -\ell_{2}) \wedge (\ell_{2} -\ell_{3}), 
                x_{1} \otimes (\ell_{2} - \ell_{3}), x_{2} \otimes (\ell_{1} - \ell_{3}), x_{3} \otimes (\ell_{1} - \ell_{2}).
            \]
            The only non-zero differentials are
            $ d(\ell_{1} - \ell_{2}) = x_{1} - x_{2} $, $ d(\ell_{2} - \ell_{3}) = x_{2} - x_{3}$, and 
            $d((\ell_{1}- \ell_{2}) \wedge (\ell_{2} - \ell_{3})) = x_{1} \otimes (\ell_{2} - \ell_{3}) 
                - x_{2} \otimes (\ell_{1} - \ell_{3}) + x_{3} \otimes (\ell_{1} - \ell_{2}) $.

            Therefore, $ H_{\bullet}(\mathbb{Q}[\Sigma_{U_{2,3}}]) $ has a basis
            $ 1 $, $ x_{3} $, $x_{2} \otimes (\ell_{1} - \ell_{3})$, $ x_{3} \otimes (\ell_{1} - \ell_{2})$.
        \end{example}

        \begin{example}
            Take the element $ \alpha = x_{2} x_{235} \otimes (\ell_{3}-\ell_{5}) \wedge (\ell_{1} - \ell_{6}) \in 
            K_{2}(\mathbb{Q}[\Sigma_{U_{5,6}}])_{4} $. We compute its differential to be
            \[ 
                d(\alpha) = x_{2} x_{23} x_{235} \otimes (\ell_{1} - \ell_{6}) - x_{2} x_{25} x_{235} \otimes (\ell_{1}- \ell_{6})
                - x_{2} x_{235} x_{1235} \otimes (\ell_{3} - \ell_{5}) + x_{2} x_{235} x_{2356} \otimes (\ell_{3} - \ell_{5}).
            \]
            On the other hand, the chain $ \beta = x_{3} x_{1234} \otimes (\ell_{5} - \ell_{6}) \in K_{1}(\mathbb{Q}[\Sigma_{U_{5,6}}])_{3} $
            is a cycle.
        \end{example}

\section{A basis for the singular cohomology ring of a uniform matroid}\label{section:basis}
    In this section we construct an explicit, combinatorial basis for the singular cohomology ring of a uniform 
    matroid in terms of Koszul homology $H_{\bullet}(K(\mathbb{Q}[\Sigma_{U_{r,n}}]))$.

\subsection{Retral and weakly retral chains}
    We describe our basis for the singular cohomology ring in terms of \emph{retral} and \emph{weakly retral} chains of 
    flats in $ \mathscr{L}(U_{r,n}) $ which we define here. Roughly speaking, the retral property is
    a maximality condition which is enforced on each covering relation appearing in the chain. The weakly  retral
    condition is the same, only it is \emph{not} enforced for hyperplanes covering another flat.

	\begin{definition}
        For proper flats $ F, F' $ in $ \mathscr{L}(U_{r,n}) $, we say that $F'$ \emph{covers} $ F $ 
        if $ F < F' $ and there is no $ F''  \in \mathscr{L}(U_{r,n}) $ with 
		$ F < F'' < F' $. Moreover, $ F' $ is the \emph{maximal cover} of $ F $ if $F'$ is
		the lexicographically largest flat which covers $ F$. As $ F' \setminus F $ is a singleton whenever
        a proper flat $ F' $ covers $F$, this last condition is equivalent to $ \max ([n] \setminus F) = F' \setminus F $.
	\end{definition}

    For notation, we will always assume that a chain of proper flats in $ \mathscr{L}(U_{r,n}) $ 
    contains the empty flat. We also write $ \max(\mathscr{F}) $ for the maximal element in $ \mathscr{F} $.
	
	\begin{definition}\label{def:retralChains}
        Let $ \mathscr{F} = \left\{ \emptyset = F_{0} < F_{1} < \cdots < F_{s} \right\} $ be a chain of proper flats
        in $ \mathscr{L}(U_{r,n}) $, and extend notation so that $ F_{s+1} =[n] $. We say that
        $ \mathscr{F} $ is \emph{retral} in $ \mathscr{L}(U_{r,n}) $ if, for all $ 1 \leq i \leq s $,
        $ F_{i} $ covers $ F_{i-1} $ if and only if $F_{i}$ is the maximal cover of $ F_{i-1} $ in
		$ \mathscr{L}(U_{r,n}^{F_{i+1}}) $.
	\end{definition}

	\begin{example}
      Below is the set of all retral chains in $ \mathscr{L}(U_{3,4}) $.
        \begin{gather*}
            \emptyset, \\ \emptyset < 4, \; \emptyset < 12, \; \emptyset < 13, \;  \emptyset < 14, \; 
            \emptyset < 23, \; \emptyset < 24, \; \emptyset < 34, \\
            \emptyset < 4 < 34 
        \end{gather*}
      We note that all chains $ \emptyset < F $ for $ \rk(F) = 2 $ are retral as there are no covering relations in
      the chain; therefore, the maximal cover condition is vacuous.
    \end{example}

    \begin{example}
        The chain $ \emptyset < 3 < 123 < 1235 $ is a retral chain in $ \mathscr{L}(U_{5,5}) $. 
        Note, however, that $3$ is not the maximal cover of $ \emptyset $ in $ \mathscr{L}(U_{5,5}) $.
        The maximal cover condition only requires that $ 3 $ be the maximal cover of $ \emptyset $ 
        in $ \mathscr{L}(U_{5,5}^{123}) $.
    \end{example}

	For a chain of proper flats  containing a hyperplane, 
	we will remove the maximal cover condition on the hyperplane to define \emph{weakly retral chains}.

	\begin{definition}\label{def:weaklyRetralChains}
        Let $ \mathscr{F} = \left\{ \emptyset =F_{0} < F_{1} < \cdots < F_{s} \right\} $ be a chain of proper flats  
        in $ \mathscr{L}(U_{r,n}) $ such that $ \max(\mathscr{F}) = F_{s} $ is a hyperplane. 
        We say that $ \mathscr{F} $ is \emph{weakly retral} in $ \mathscr{L}(U_{r,n}) $ if,
        for all $ 1 \leq i < s $, $ F_{i} $ covers $ F_{i-1} $ if and only if $F_{i}$ is the maximal
		cover of $ F_{i-1} $ in  $ \mathscr{L}(U_{r,n}^{F_{i+1}}) $.
	\end{definition}

	\begin{example}
	    The weakly retral chains in $ \mathscr{L}(U_{3,4}) $ are 
        \begin{gather*}
            \emptyset < 12, \; \emptyset < 13, \; \emptyset < 14, \; 
            \emptyset < 23, \; \emptyset < 24, \; \emptyset < 34, \\
            \emptyset < 2 < 12, \; \emptyset < 3 <  13, \; \emptyset < 4 <  14, \; 
            \emptyset < 3 < 23, \; \emptyset <  4 < 24, \; \emptyset <4 <  34.
        \end{gather*}
        Note that, even though $ \emptyset < 4 $ is a retral chain, it is not weakly retral as
        it does not contain  a hyperplane.
	\end{example}

    For a proper, non-empty flat $ F $, we define retral and weakly retral chains of 
    flats in $ \mathscr{L}(U_{r,n}^{F}) $.

    \begin{definition}
        A chain of proper flats $ \mathscr{F} $ in $ \mathscr{L}(U_{r,n}^{F}) $ 
        is \emph{retral} (\emph{weakly retral}) if and only if it is  the image of a retral (weakly retral) chain
        of $ \mathscr{L}(U_{\left| F \right|, \left|F \right|}) $ under the unique lexicographic-order preserving
        isomorphism $ U_{\left| F \right|, \left| F \right|} \to U_{r,n}^{F} $.
    \end{definition}

    We need one final definition before describing our basis for the singular cohomology ring.

	\begin{definition}\label{def:admissible}
            Let $ \mathscr{F} $ be a weakly retral chain in $ \mathscr{L}(U_{r,n}) $
			containing the hyperplane $ \max(\mathscr{F}) $.
			A pure wedge $ \xi = (\ell_{u_{1}} - \ell_{v_{1}}) \wedge \cdots \wedge
				(\ell_{u_{i}} - \ell_{v_{i}}) \in \bigwedge^{i >0 } N^{\vee}_{\mathbb{Q}} $
			is \emph{admissible} to $ \mathscr{F} $ if:
			\begin{enumerate}
                \item Each $ u_{m} $ and $ v_{m} $ is in $ [n] \setminus \max(\mathscr{F})  $.
				\item $ u_{1} < \dots < u_{i} $, and for all $m$, $v_{m} $ is the successor of
                        $ u_{m} $ in the  ordered set $ [n] \setminus \max(\mathscr{F}) $.
                \item If $ \max(\mathscr{F}) $ covers $ F \in \mathscr{F} $, then
                        $ u_{1} $ is less than the single element of $ \max(\mathscr{F}) \setminus F $.
			\end{enumerate}
	\end{definition}

    We note that condition (1) ensures that $ x_{\mathscr{F}} \otimes \xi $ is a cycle in the
    Koszul complex, and condition (2) ensures the collection of admissible wedges to $ \mathscr{F} $ are linearly independent.
    Only condition (3) has non-trivial content for describing our basis.

    \begin{example}
        For the weakly retral chain $ \mathscr{F} = \left\{ \emptyset < 4 < 24 \right\} $ in $ \mathscr{L}(U_{3,4}) $, the only admissible wedge is
        $ (\ell_{1} - \ell_{3}) $.
        On the other hand, the weakly retral chain $ \mathscr{F} = \left\{ \emptyset < 4 < 14 \right\} $ has no admissible wedges.
    \end{example}

    \begin{example}
        If $ \mathscr{F} $ is a weakly retral chain in $ \mathscr{L}(U_{n,n}) $, then there are no admissible wedges.
        This is because $ [n] \setminus \max(\mathscr{F}) $ is a singleton, so conditions (1) and (2) cannot be 
        simultaneously satisfied.
    \end{example}

	\subsection{A basis of the singular cohomology ring}

        We now describe and construct our basis for the singular cohomology ring of a uniform matroid.

	\begin{thm}\label{thm:basis}
		The set
		\[ 
			\left\{ x_{\mathscr{F}} : \textrm{$\mathscr{F}$ is retral} \right\} \cup
					\left\{ x_{\mathscr{F}} \otimes \xi : \textrm{$ \mathscr{F} $ is weakly
					retral and $ \xi $ is admissible to $ \mathscr{F} $} \right\}
		\]
		is a basis of $ H_{\bullet}(K(\mathbb{Q}[\Sigma_{U_{r,n}}]))
		\cong H^{\bullet}(X_{\Sigma_{U_{r,n}}}) $ 
		as a $ \mathbb{Q} $-vector space.
	\end{thm}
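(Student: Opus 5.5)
The plan is to induct on the rank $r$, using the fact that killing the hyperplane variables lowers the rank by one. Let $J \subseteq \mathbb{Q}[\Sigma_{U_{r,n}}]$ be the ideal generated by the $x_{H}$ for $H \in \mathscr{H}(U_{r,n})$. The flats of rank at most $r-2$ in $U_{r,n}$ are exactly the proper flats of $U_{r-1,n}$, with the same rays $e_{F}$ in the same lattice $N$. Hence
\[
\mathbb{Q}[\Sigma_{U_{r,n}}]/J \cong \mathbb{Q}[\Sigma_{U_{r-1,n}}]
\]
as $\Sym(N_{\mathbb{Q}}^{\vee})$-algebras. Since the Koszul complex is exact in the module variable, the short exact sequence $0 \to J \to \mathbb{Q}[\Sigma_{U_{r,n}}] \to \mathbb{Q}[\Sigma_{U_{r-1,n}}] \to 0$ gives a long exact sequence in Koszul homology. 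The base case $r=1$ is Example~\ref{ex:U1n}. There the only retral chain is $\{\emptyset\}$, and the weakly retral chain $\{\emptyset\}$ (whose maximum $\emptyset$ is the hyperplane) carries the admissible wedges built from consecutive differences, which form a basis of $\bigwedge^{>0}N_{\mathbb{Q}}^{\vee}$.

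Next I would compute the Koszul homology of $J$ directly. Because $x_{H}x_{G}=0$ unless $G \le H$ and two distinct hyperplanes are incomparable, $J$ splits as a direct sum
\[
J = \bigoplus_{H} x_{H}\,\mathbb{Q}[x_{H}] \otimes \mathbb{Q}[\Sigma_{U^{H}_{r,n}}].
\]
On the $H$-summand, each form $\ell_{u}-\ell_{v}$ with $u,v \notin H$ acts by zero. Each form $\ell_{u}-\ell_{h}$ with $h \in H$ and $u\notin H$ acts by $-x_{H}$ plus terms in the flats of $U^{H}$ containing $h$. Choosing a basis of $N^{\vee}_{\mathbb{Q}}$ adapted to $H$, the Koszul homology of the $H$-summand should factor as $\bigwedge^{\bullet}$ of the span of the differences in $[n]\setminus H$ (basis: the consecutive-difference wedges of Definition~\ref{def:admissible}(1),(2)) tensored with a ring of the form $x_{H}\cdot(\text{Chow ring of } U^{H}_{r,n}) \cong x_{H}\cdot A^{\bullet}(X_{\Sigma_{U_{r-1,r-1}}})$, up to the relation on $x_{H}$. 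By induction, applied to the Boolean matroid on $H$, the latter has basis $x_{\mathscr{F}'}x_{H}$ with $\mathscr{F}'$ retral in $\mathscr{L}(U^{H}_{r,n})$. This produces exactly the products $x_{\mathscr{F}}\otimes\xi$ with $\max(\mathscr{F})=H$, but before imposing condition (3) and the maximal-cover condition at $H$.

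The key step, and the main obstacle, is computing the connecting homomorphism $\delta\colon H_{i}(K(\mathbb{Q}[\Sigma_{U_{r-1,n}}]))\to H_{i-1}(K(J))$. By induction, the source has basis $x_{\mathscr{G}}$ and $x_{\mathscr{G}}\otimes\xi$, with $\mathscr{G}$ (weakly) retral in $U_{r-1,n}$. Its top flats have rank $r-2$ and become non-hyperplanes in $U_{r,n}$.
\begin{itemize}
\item A class whose lift is still a cycle gives a retral chain of $U_{r,n}$ with no hyperplane. This is where the identity $x_{F}=x_{F'}+(\text{other terms})$ coming from $d(\ell_{a}-\ell_{b})$ enters.
\item For the remaining classes, $\delta$ is computed by lifting and applying $d$. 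For example, $\delta(x_{\mathscr{G}}\otimes(\ell_{u}-\ell_{v}))$ is a sum of $x_{\mathscr{G}}x_{H}$ over hyperplanes $H=\max(\mathscr{G})\cup\{u\}$ minus those with $\max(\mathscr{G})\cup\{v\}$.
\end{itemize}
I would choose a term order on monomials of $K(J)$ favoring lexicographically large covers. I would then show that the leading terms of $\delta$ of a basis of the source are distinct, so $\delta$ has full rank on that part. Its image is spanned, modulo lower terms, by the monomials violating either the maximal-cover condition at $H$ or condition (3). Then the cokernel of $\delta$ has basis the weakly retral $x_{\mathscr{F}}\otimes\xi$ (and the retral $x_{\mathscr{F}}$ with $\max(\mathscr{F})$ a hyperplane), while $\ker\delta$ gives the retral chains without a hyperplane.

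Finally, I would verify independently that each proposed element is a cycle; condition (1) ensures this for $x_{\mathscr{F}}\otimes\xi$. Their number matches $\dim\operatorname{coker}\delta+\dim\ker\delta$ computed from the long exact sequence, so they form a basis. If the leading-term bookkeeping for $\delta$ proves too delicate, I would instead prove linear independence of the proposed classes directly via initial terms with respect to the same order, and obtain spanning from the dimension count supplied by the long exact sequence.
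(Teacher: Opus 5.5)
Your outline is the paper's proof. Your $J$ is the image of $\bigoplus_{H}\mathbb{Q}[\overline{\Star}(\rho_{H})]\xrightarrow{\cdot x_{H}}\mathbb{Q}[\Sigma_{U_{r,n}}]$. Your adapted-basis computation of each hyperplane summand is Lemma~\ref{lem:hyperplaneBasis}: K\"{u}nneth with the exterior algebra on differences in $[n]\setminus H$ and the permutohedral Chow ring. The cokernel of $\delta$ is then read off from standard monomials. The gap is that the step you call the main obstacle carries all the content and is left open. Moreover, your one concrete specification, a term order \emph{favoring lexicographically large covers}, read literally points the wrong way.

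For a basis class $\alpha=x_{\mathscr{G}}\otimes\bigwedge_{m=1}^{i}(\ell_{u_m}-\ell_{v_m})$, the leading term must be the summand at the lexicographically \emph{smallest} hyperplane $H=\max(\mathscr{G})\cup u_1$, namely $x_{\mathscr{G}}\otimes\bigwedge_{m\ge 2}(\ell_{u_m}-\ell_{v_m})$.
\begin{itemize}
\item It occurs exactly once in $\delta(\alpha)$, since $u_1$ differs from $v_1$ and from all $u_m,v_m$ with $m\ge2$.
\item Its wedge lies in $P_H$, because $u_m,v_m>u_1$.
\item It is a genuine basis element of the $H$-summand precisely because admissibility condition (3) for $\mathscr{G}$ in $U_{r-1,n}$ says that $\max(\mathscr{G})$, if it covers a flat of $\mathscr{G}$, is the maximal cover of that flat in $\mathscr{L}(U^{H}_{r,n})$. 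That is what makes $\mathscr{G}$ retral in $U^{H}_{r,n}$.
\item You recover $u_1=H\setminus\max(\mathscr{G})$, with $v_1$ its successor, so $\alpha\mapsto\init(\alpha)$ is injective.
\end{itemize}
Hence $\delta_i$ is injective for every $i\ge1$. The image of $\init$ is exactly the set of basis elements that violate the maximal-cover condition at $H$ (empty wedge) or violate condition (3) (nonempty wedge).

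If large covers lead instead, the leading term sits at $\max(\mathscr{G})\cup v_i$. There $\mathscr{G}$ need not be retral in $U^{H}_{r,n}$, so the leading term need not be a basis element and triangularity is lost. For $i=1$ and $v_1=\max([n]\setminus\max(\mathscr{G}))$, the leading term would even satisfy the maximal-cover condition, contradicting your own description of the image.

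Two smaller points.
\begin{itemize}
\item Since $\delta_0=0$ and $\delta_{i}$ is injective for $i\ge1$, $\ker\delta$ is exactly $H_0(K(\mathbb{Q}[\Sigma_{U_{r-1,n}}]))$. Those classes lift as themselves, so no identity $x_F=x_{F'}+\cdots$ is involved, and no class of positive homological degree lifts.
\item Your closing argument (each proposed element is a cycle and the count matches) does not prove a basis, since cycles can be dependent in homology. The basis comes from splitting the long exact sequence: push forward the standard monomials and lift $\ker\delta$. Your fallback is circular for the same reason, because the dimension count from the long exact sequence needs the rank of $\delta$, which is the leading-term computation you wanted to avoid.
\end{itemize}
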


	 We will prove Theorem \ref{thm:basis} through the following five lemmas, inducting on $ r $. We
	 start with the base case.

	 \begin{lem}\label{lem:baseCase}
			Theorem \ref{thm:basis} holds in rank $ 1 $.	
	 \end{lem}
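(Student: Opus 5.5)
In rank $1$ the lattice $\mathscr{L}(U_{1,n})$ has only two flats, $\emptyset$ and $[n]$, so the only proper flat is $\emptyset$ and there are no proper non-empty flats. By Example~\ref{ex:U1n}, $\mathbb{Q}[\Sigma_{U_{1,n}}]\cong\mathbb{Q}$, the Koszul differential is zero, and
\[
H_{\bullet}(K(\mathbb{Q}[\Sigma_{U_{1,n}}]))\cong\bigwedge^{\bullet}N_{\mathbb{Q}}^{\vee}.
\]
So the plan is to show that the proposed set is exactly a basis of $\bigwedge^{\bullet}N^{\vee}_{\mathbb{Q}}$, a space of dimension $2^{n-1}$ since $\dim N^{\vee}_{\mathbb{Q}}=n-1$.

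First I enumerate the chains. The only chain of proper flats is $\mathscr{F}=\{\emptyset\}$, with $x_{\mathscr{F}}=1$.
\begin{itemize}
\item It is retral, since the condition in Definition~\ref{def:retralChains} ranges over $1\le i\le s=0$ and is therefore vacuous. This contributes the basis element $1\in\bigwedge^0$.
\item It is also weakly retral. In rank $1$ the hyperplanes are the flats of rank $0$, so $\emptyset$ is the unique hyperplane and $\max(\mathscr{F})=\emptyset$ is a hyperplane. Again the covering condition is vacuous.
\end{itemize}
Next I list the admissible wedges to $\{\emptyset\}$ using Definition~\ref{def:admissible}. Condition (1) requires $u_m,v_m\in[n]\setminus\emptyset=[n]$. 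Condition (2) forces $v_m=u_m+1$ with $u_1<\dots<u_i$. Condition (3) is vacuous, because $\max(\mathscr{F})=\emptyset$ covers no flat in the chain. Hence the admissible wedges are exactly
\[
(\ell_{u_1}-\ell_{u_1+1})\wedge\cdots\wedge(\ell_{u_i}-\ell_{u_i+1})
\]
for nonempty subsets $\{u_1<\dots<u_i\}\subseteq[n-1]$.

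Finally I check that these elements together with $1$ form a basis. The vectors $\ell_u-\ell_{u+1}$ for $u\in[n-1]$ form a basis of $N^{\vee}_{\mathbb{Q}}$: they span every $\ell_i-\ell_j$ by telescoping, and there are $n-1=\dim N^{\vee}_{\mathbb{Q}}$ of them. The wedges of increasing subsets of a basis form a basis of the exterior algebra. Therefore $1$ together with the admissible wedges (each tensored with $x_{\emptyset}=1$) is a basis of $\bigwedge^{\bullet}N^{\vee}_{\mathbb{Q}}$, which is the Koszul homology.

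The only delicate step is the conventions: $\emptyset$ must count as a hyperplane in rank $1$, and the empty chain $\{\emptyset\}$ is simultaneously retral and weakly retral, giving $1$ and the positive-degree wedges respectively.
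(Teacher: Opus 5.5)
Your proposal is correct and follows the paper's own proof: identify the Koszul homology with $\bigwedge^{\bullet}N^{\vee}_{\mathbb{Q}}$ via Example~\ref{ex:U1n}, observe that $\{\emptyset\}$ is the unique (weakly) retral chain, and check that its admissible wedges $\bigwedge_m(\ell_{u_m}-\ell_{u_m+1})$ together with $1$ give a basis. You simply spell out the details the paper leaves implicit: that $\emptyset$ is a hyperplane in rank $1$, that condition (3) is vacuous, and that $\{\ell_u-\ell_{u+1}\}_{u\in[n-1]}$ is a basis of $N^{\vee}_{\mathbb{Q}}$.
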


	 \begin{proof}
		From Example \ref{ex:U1n}, 
		$ H_{\bullet}(K(\mathbb{Q}[\Sigma_{U_{1,n}}])) \cong
		\bigwedge^{\bullet} N_{\mathbb{Q}}^{\vee} $.
		The chain $\emptyset $ is the only weakly retral chain (it is also retral), and the 
		wedges which are admissible to $ \emptyset $ form a basis of 
		$ \bigwedge^{>0} N_{\mathbb{Q}}^{\vee} $. 
	 \end{proof}

	 For the inductive step of the proof, we use the
		\emph{truncation exact sequence} 
		\begin{equation}\label{eq:truncationLES}
            \cdots \to \bigoplus_{H \in \mathscr{H}(U_{r,n})} 
			H_{i}(K(\mathbb{Q}[\overline{\Star}(\rho_{H})] ) )_{j}
			\to H_{i} ( K ( \mathbb{Q}[\Sigma_{U_{r,n}}] ) )_{j+1} \to 
			H_{i}( K ( \mathbb{Q}[\Sigma_{U_{r-1,n}}] ) )_{j+1} \to \cdots
		\end{equation}
		where the subfan $ \overline{\Star}(\rho_{H}) = \left\{ \sigma \in \Sigma_{U_{r,n}} : 
		\pos(\sigma, \rho_{H} ) \in \Sigma_{U_{r,n}} \right\} $ is the \emph{closed star} of
		$ \rho_{H} $. 	The exact sequence is induced by applying Koszul homology to the exact sequence of 
	  $ \Sym(N_{\mathbb{Q}}^{\vee}) $-algebras
	  \[ 
          0 \to \bigoplus_{H \in \mathscr{H}(U_{r,n})} \mathbb{Q}[\overline{\Star}(\rho_{H})]
			\xrightarrow{\oplus \cdot x_{\rho_{H}}} \mathbb{Q}[\Sigma_{U_{r,n}}] \to
			\mathbb{Q}[\Sigma_{U_{r-1,n}}] \to 0.
	 	\]
	 	We will write the  connecting homomorphism as
	 	\[ 
            \delta_{i} \colon H_{i}( K(\mathbb{Q}[\Sigma_{U_{r-1,n}}] ))_{j}  \to
			\bigoplus_{H} H_{i-1}(K(\mathbb{Q}[\overline{\Star}(\rho_{H})]))_{j-1}.
	 	\]

	 The inductive hypothesis provides
	 bases for $ H_{i}( K(\mathbb{Q}[\Sigma_{U_{r-1,n}}]))_{j} $ and each 
	 $ H_{i}(K(\mathbb{Q}[\overline{\Star}(\rho_{H})]))_{j} $ 
	 which we describe in Lemma \ref{lem:hyperplaneBasis}. 
	 In Lemmas \ref{lem:initialBasis} and \ref{lem:initInjective}, we use these bases to 
     compute a Gr\"{o}bner basis for the image of the connecting homomorphism $ \delta_{i} $. We describe
     the standard monomials for $ \coker \delta_{i} $ in Lemma \ref{lem:standardMonomials},
	 and these produce the basis for Theorem \ref{thm:basis}.

	 \begin{definition}
         For a hyperplane $ H \in \mathscr{H}(U_{r,n}) $, define $P_{H} \subseteq \bigwedge^{>0} N_{\mathbb{Q}}^{\vee} $ to be the set of pure 
			wedges $ \xi = (\ell_{u_{1}} - \ell_{v_{1}}) \wedge \cdots \wedge (\ell_{u_{i}} -
					 \ell_{v_{i}}) $ such that
			\begin{enumerate}
				\item Each $ u_{m} $ and $ v_{m} $ is in $ [n] \setminus H $.
				\item $ u_{1} < \cdots <  u_{i}$, and for all $ m $, $v_{m} $ is the 
							successor of $ u_{m} $ in the ordered set $[n] \setminus H $.
			\end{enumerate}
		\end{definition}

	 \begin{lem}\label{lem:hyperplaneBasis}
			Assume that Theorem \ref{thm:basis} holds in rank $ r- 1 $. Then
			\[ 
					\left\{ x_{\mathscr{F}} \otimes \xi : \textrm{$ \mathscr{F} $ is retral in 
					 $ \mathscr{L}(U_{r,n}^{H}) $ and $ \xi \in P_{H} $} \right\} 
			\]
            is a basis for $H_{\bullet}(K(\mathbb{Q}[\overline{\Star}(\rho_{H})])) $, where $ H \in \mathscr{H}(U_{r,n})$.
		\end{lem}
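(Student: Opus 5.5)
The plan is to identify the closed star $\overline{\Star}(\rho_{H})$ with a product fan, so that its Koszul complex splits as a tensor product. Then I apply the inductive hypothesis to the factor coming from $U_{r,n}^{H}$ and compute the other factor by hand.

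\textbf{Step 1: the fan.} The cones of $\overline{\Star}(\rho_{H})$ are indexed by chains of proper flats of $U_{r,n}$ that can be extended by $H$. Every such chain consists of flats contained in $H$, with $H$ possibly included. So as a simplicial complex, $\overline{\Star}(\rho_{H})$ is the join of $\{\rho_{H}\}$ with the fan whose cones are chains of proper flats of $\mathscr{L}(U_{r,n}^{H})$. Since $H$ is a hyperplane of a uniform matroid, $U_{r,n}^{H} \cong U_{r-1,|H|}$, a Boolean-type restriction with $|H|=r-1$. This gives
\[ \mathbb{Q}[\overline{\Star}(\rho_{H})] \cong \mathbb{Q}[\Sigma_{U^{H}_{r,n}}] \otimes \mathbb{Q}[x_{H}]. \]

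\textbf{Step 2: the module structure and the Koszul complex.} Next I track the $\Sym(N_{\mathbb{Q}}^{\vee})$-module structure. Choose a splitting of $N_{\mathbb{Q}}^{\vee}$ into three pieces:
\begin{itemize}
\item the span $V_{H}$ of the $\ell_{a}-\ell_{b}$ with $a,b \in H$, which is the character space of the Bergman fan of $U^{H}_{r,n}$;
\item one element $\ell_{h}-\ell_{c}$ with $h \in H$ and $c\notin H$;
\item the span $V'$ of the $\ell_{u}-\ell_{v}$ with $u,v\notin H$.
\end{itemize}
For elements $m \in V'$, the image $d(m)=\sum_{\rho}\langle m,u_{\rho}\rangle x_{\rho}$ vanishes on $\overline{\Star}(\rho_{H})$, because every ray of the star is some $e_{F}$ with $F\subseteq H$. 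The element $\ell_{h}-\ell_{c}$ maps to $x_{H}$ plus terms $x_{F}$ with $h\in F\subsetneq H$. A linear change of variables preserves Koszul homology up to isomorphism. Using this, I expect the Koszul complex to decompose as
\[ K(\mathbb{Q}[\Sigma_{U^{H}_{r,n}}]) \otimes K(\mathbb{Q}[x_{H}]\ \text{over}\ \mathbb{Q}[t]) \otimes \bigwedge^{\bullet} V'. \]
Here the variable $t$ acts by $x_{H}$ modulo the first factor, which should be arranged by an upper-triangular change of basis. The middle factor is acyclic except for $\mathbb{Q}$ in degree $0$, and the last factor has zero differential. By the Künneth formula,
\[ H_{\bullet}(K(\mathbb{Q}[\overline{\Star}(\rho_{H})])) \cong H_{\bullet}(K(\mathbb{Q}[\Sigma_{U^{H}_{r,n}}])) \otimes \bigwedge^{\bullet} V'. \]

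\textbf{Step 3: the two bases.} For the first factor, $U^{H}_{r,n}\cong U_{r-1,r-1}$ has rank $r-1$, so the inductive hypothesis applies. By the example above, weakly retral chains in a Boolean matroid carry no admissible wedges, so the basis consists exactly of the $x_{\mathscr{F}}$ with $\mathscr{F}$ retral in $\mathscr{L}(U^{H}_{r,n})$. This uses the definition of retral via the lex-preserving isomorphism. For the last factor, $V'$ has dimension $n-r$ and basis $\ell_{u}-\ell_{\mathrm{succ}(u)}$, where $u$ runs over $[n]\setminus H$ except its maximum. Its exterior powers therefore have basis exactly $P_{H}\cup\{1\}$. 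The statement lists $\xi\in P_{H}$ but must implicitly include $1$ as the empty wedge, and I would read it that way.

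\textbf{Main obstacle.} The delicate step is justifying the tensor splitting in Step 2. The element $\ell_{h}-\ell_{c}$ does not act purely as $x_{H}$. I would handle this by filtering, or by noting that $\mathbb{Q}[\overline{\Star}]$ is free over $\mathbb{Q}[\,d(\ell_{h}-\ell_{c})\,]$ with quotient isomorphic to $\mathbb{Q}[\Sigma_{U^{H}}]$, compatibly with the $V_{H}$-action. That gives the Tor identity
\[ \Tor^{\Sym(N^{\vee})}(\mathbb{Q}[\overline{\Star}],\mathbb{Q}) \cong \Tor^{\Sym(V_{H})}(\mathbb{Q}[\Sigma_{U^{H}}],\mathbb{Q})\otimes\bigwedge V'. \]
I would then check that the explicit representatives $x_{\mathscr{F}}\otimes\xi$ are cycles whose classes realize this isomorphism. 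This holds since $d\xi=0$ for $\xi\in\bigwedge V'$.
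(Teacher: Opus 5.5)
Your proposal is correct and is essentially the paper's argument. The paper cites \cite[Lemma A.3]{binder} to replace $\overline{\Star}(\rho_{H})$ by $\Star(\rho_{H}) \cong \Sigma_{U_{r-1,r-1}} \times 0$, then applies K\"{u}nneth to $K(\mathbb{Q}[\Sigma_{U_{r-1,r-1}}]) \otimes \bigwedge^{\bullet} N^{\vee}_{[n]\setminus H,\mathbb{Q}}$ (your $V'$) together with the inductive hypothesis; you prove that closed-star-to-star reduction by hand instead of citing it.

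Your ``main obstacle'' is harmless. The variable $x_{H}$ satisfies no Stanley--Reisner relations in the closed star, so substituting $y = x_{H} + \sum_{h \in F \subsetneq H} x_{F}$ makes the splitting an honest isomorphism of differential graded algebras. Your reading that $P_{H}$ must include the empty wedge $1$ also matches how the paper uses the lemma in the case $i=1$.
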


		\begin{proof}
			By \cite[Lemma A.3]{binder},  $
			H_{\bullet}(K(\mathbb{Q}[\Star(\rho_{H})])) \cong
			H_{\bullet}(K(\mathbb{Q}[\overline{\Star}(\rho_{H})])) $, where 
			$ \Star(\rho_{H}) $ is the \emph{star} of $ \rho_{H} $ in $ \Sigma_{U_{r,n}} $
			(see \cite[Definition (3.2.8)]{CLS}). 
			This isomorphism is induced by the natural inclusion of complexes
			$ K_{\bullet}(\mathbb{Q}[\Star(\rho_{H})]) \hookrightarrow
			K_{\bullet}(\mathbb{Q}[\overline{\Star}(\rho_{H})]) $. In our case,
			$ \Star(\rho_{H}) \cong  \Sigma_{U_{r-1,r-1}} \times 0 $ in the product vector space $ N_{H,\mathbb{R}}
            \times N_{[n] \setminus H, \mathbb{R}} $ where $N_{H, \mathbb{R}} = \spann(e_{i} :
            i \in H)/e_{H} $ and $ N_{[n] \setminus H, \mathbb{R}} = {\spann(e_{i}  : i \in [n] \setminus H)}/e_{[n] \setminus H} $.
			Therefore, $ K_{\bullet}(\mathbb{Q}[\Star(\rho_{H})]) $ is isomorphic to the tensor
			complex $ K_{\bullet}(\mathbb{Q}[\Sigma_{U_{r-1,r-1}}]) \otimes_{\mathbb{Q}}
            \bigwedge^{\bullet} N_{[n] \setminus H, \mathbb{Q}}^{\vee} $, with trivial differential
            on $ \bigwedge^{\bullet} 
			N_{[n] \setminus H, \mathbb{Q}}^{\vee} $ . As $ P_{H} $ forms 
			a basis for $ N_{[n] \setminus H, \mathbb{Q}}^{\vee} $, the lemma follows from the
			K\"{u}nneth formula and the basis for $H_{\bullet}(K(\mathbb{Q}[\Sigma_{U_{r-1,r-1}}])) $
			described in Theorem \ref{thm:basis}.
		\end{proof}

        For each $H \in \mathscr{H}(U_{r,n}) $, choose an arbitrary ordering of the basis elements from
		Lemma \ref{lem:hyperplaneBasis}. Then extend this to an ordering of the basis elements of
        $ \bigoplus_{H} H_{\bullet}(K(\mathbb{Q}[\overline{\Star}(\rho_{H})])) $
		such that a basis element of $H_{\bullet}(K(\mathbb{Q}[\overline{\Star}(\rho_{H_{1}})])) $
		precedes a basis element of $H_{\bullet}(K(\mathbb{Q}[\overline{\Star}(\rho_{H_{2}})])) $
		if and only if $ H_{1} <_{\lex} H_{2} $.

        We now study the connecting homomorphisms $ \delta_{i} $ through Gr\"{o}bner theory.
		For a basis element $ \alpha  $ of $ H_{i}(K(\mathbb{Q}[\Sigma_{U_{r-1,n}}])) $,
		we are interested in the first basis element of 
        $ \bigoplus_{H} H_{i-1}(K(\mathbb{Q}[\overline{\Star}(\rho_{H})])) $
        that appears in $ \delta_{i}(\alpha) $.

		\begin{definition}
			For a basis element $\displaystyle \alpha = x_{\mathscr{F}} \otimes \bigwedge_{m =1}^{i}
				(\ell_{u_{m}} - \ell_{v_{m}}) \in H_{i}(K(\mathbb{Q}[\Sigma_{U_{r-1,n}}]))$,
			define \[ \init(\alpha) =  x_{\mathscr{F}} \otimes
			\bigwedge_{m = 2}^{i} (\ell_{u_{m}} - \ell_{v_{m}})
                \in H_{i-1}(K(\mathbb{Q}[\overline{\Star}(\rho_{H})])),\]
                where $  H = \max(\mathscr{F})\cup u_{1} $ is a hyperplane of $ U_{r,n} $.
		\end{definition}

		\begin{lem}\label{lem:initialBasis}
			Assume that Theorem \ref{thm:basis} holds in rank $r-1 $.
			Then for each basis element $ {\alpha \in H_{i>0}(K(\mathbb{Q}[\Sigma_{U_{r-1,n}}]))} $, 
            $ \init(\alpha) $ is the first basis element appearing in $ \delta_{i}(\alpha) $.
		\end{lem}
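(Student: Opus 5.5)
The plan is to compute $\delta_{i}(\alpha)$ explicitly with the usual snake-lemma recipe. Lift $\alpha$ to $K(\mathbb{Q}[\Sigma_{U_{r,n}}])$, apply the Koszul differential, and divide by the hyperplane variables. Then identify the lexicographically first hyperplane summand and check that what appears there is exactly the basis element $\init(\alpha)$ of Lemma \ref{lem:hyperplaneBasis}, with coefficient $\pm 1$.

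\emph{Setting up.} Since $i>0$, the inductive hypothesis says $\alpha = x_{\mathscr{F}} \otimes \xi$ with $\mathscr{F} = \{\emptyset = F_{0} < \cdots < F_{s}\}$ weakly retral in $\mathscr{L}(U_{r-1,n})$ and $\xi = \bigwedge_{m}(\ell_{u_{m}} - \ell_{v_{m}})$ admissible to $\mathscr{F}$. Put $G = F_{s}$; it has rank $r-2$ and is a hyperplane of $U_{r-1,n}$. Every $F_{k}$ is also a flat of $U_{r,n}$, so the same expression is a lift $\tilde\alpha$ of $\alpha$. By the Leibniz rule,
\[
    d(\tilde\alpha) = \sum_{m} \pm\, x_{\mathscr{F}}\, d(\ell_{u_{m}} - \ell_{v_{m}}) \otimes \xi_{\hat m},
\]
where $\xi_{\hat m}$ omits the $m$th factor. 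In $d(\ell_{u} - \ell_{v}) = \sum_{F \ni u} x_{F} - \sum_{F' \ni v} x_{F'}$, multiplication by $x_{\mathscr{F}}$ kills every flat that is incomparable with the chain. Because $u, v \notin G$, no flat contained in $G$ survives. The only other proper flats of $U_{r,n}$ comparable with $\mathscr{F}$ are the rank $r-1$ flats $G \cup \{w\}$, and such a flat contains $u$ only when $w = u$. Hence
\[
    d(\tilde\alpha) = \sum_{m} \pm \left( x_{G\cup u_{m}} - x_{G \cup v_{m}} \right) x_{\mathscr{F}} \otimes \xi_{\hat m}.
\]
Dividing by the hyperplane variables gives
\[
    \delta_{i}(\alpha) = \sum_{m} \pm \Bigl( [x_{\mathscr{F}}\otimes \xi_{\hat m}]_{G \cup u_{m}} - [x_{\mathscr{F}} \otimes \xi_{\hat m}]_{G\cup v_{m}} \Bigr),
\]
with subscripts indicating the summand of $\bigoplus_{H}$.

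\emph{Identifying the first summand.} All the hyperplanes that occur have the form $G \cup \{w\}$ with $w \in \{u_{1},\dots,u_{i},v_{1},\dots,v_{i}\}$. Since $u_{1}$ is the smallest of these elements, $H = G \cup u_{1}$ is lexicographically first. I will double-check the lex convention on sets so that a smaller added element gives a smaller set. The only contribution in the $H$-summand is $\pm x_{\mathscr{F}} \otimes \xi_{\hat 1}$. It therefore remains to show this is itself a basis element from Lemma \ref{lem:hyperplaneBasis}, so that no rewriting in the arbitrary within-$H$ ordering is needed.

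For the wedge: each $u_{m}, v_{m}$ with $m \geq 2$ lies in $[n] \setminus H$, and the successor of $u_{m}$ in $[n]\setminus H$ equals its successor in $[n]\setminus G$. This holds because the only removed element is $u_{1} < u_{m}$. So $\xi_{\hat 1} \in P_{H}$.

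For the chain: we need $\mathscr{F} = \{F_{0} < \cdots < F_{s} = G\}$ to be retral in $\mathscr{L}(U_{r,n}^{H})$, now with $F_{s+1} = H$.
\begin{enumerate}
    \item For $k < s$ the conditions are identical to the weakly retral conditions already assumed, since $F_{k+1}$ is unchanged.
    \item For $k = s$, suppose first that $G$ covers $F_{s-1}$. Write $G \setminus F_{s-1} = \{g\}$. Admissibility condition (3) gives $u_{1} < g$, so $\max(H \setminus F_{s-1}) = \max\{g, u_{1}\} = g$. Thus $G$ is the maximal cover of $F_{s-1}$ in $U_{r,n}^{H}$, as required.
    \item If $G$ does not cover $F_{s-1}$, the condition is vacuous.
\end{enumerate}
The one point needing care is the lex-order-preserving identification $U_{|H|,|H|} \cong U_{r,n}^{H}$ used to define retrality in a restriction. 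I will check that it preserves "maximal cover", which it should since it preserves the order on elements.

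\emph{Main obstacle.} The delicate step is making the sign and ordering bookkeeping airtight. One point is confirming that the lex order on hyperplanes puts $G \cup u_{1}$ strictly first. The other is ruling out any other term of $d(\tilde\alpha)$ landing in the $H$-summand, for instance from some $v_{m}$ with $G \cup v_{m} = H$. That cannot happen, since $v_{m} > u_{m} \geq u_{1}$. The role of condition (3) in Definition \ref{def:admissible} is exactly to make $\init(\alpha)$ a genuine basis element, and I expect this is where the argument must be stated most carefully.
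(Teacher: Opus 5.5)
Your proposal is correct and follows essentially the same route as the paper: you derive the same explicit formula for $\delta_{i}(\alpha)$, observe that $\max(\mathscr{F})\cup u_{1}$ is the lexicographically first hyperplane and receives only the single term $\pm\, x_{\mathscr{F}}\otimes\bigwedge_{m\geq 2}(\ell_{u_{m}}-\ell_{v_{m}})$, and use admissibility condition (3) to see that $\max(\mathscr{F})$ is the maximal cover of the preceding flat in $\mathscr{L}(U_{r,n}^{H})$, so that $\init(\alpha)$ is a genuine basis element. Your version also spells out details the paper leaves implicit: the snake-lemma lift, the consistency of the lex convention with the paper's definition of maximal cover, and the membership $\bigwedge_{m\geq 2}(\ell_{u_{m}}-\ell_{v_{m}})\in P_{H}$.
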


		\begin{proof}
			We first show that $ \init(\alpha) $ is a basis element. Write $ \alpha =
			x_{\mathscr{F}} \otimes \bigwedge_{m=1}^{i}(\ell_{u_{m}} - \ell_{v_{m}}) $ 
			and $ H = \max(\mathscr{F}) \cup u_{1} $ as above. By assumption, $ \mathscr{F} $ 
			is a weakly retral chain in $ \mathscr{L}(U_{r-1,n}) $, and we must show that it is retral
			in $ \mathscr{L}(U_{r,n}^{H}) $. The only thing to check is the maximal cover condition for
			$ \max(\mathscr{F}) $. If $ \max(\mathscr{F}) $ covers $ F \in \mathscr{F} $,
			then $ u_{1} < \max(\mathscr{F}) \setminus F $ by the definition of admissible wedges.
			The two covers of $ F $ in $ \mathscr{L}(U_{r,n}^{H}) $ are therefore $ F \cup u_{1} $ and 
			$ \max(\mathscr{F}) $, and $ \max(\mathscr{F}) $ is the maximal one.
			Finally, one immediately verifies that $ \bigwedge_{m=2}^{i} (\ell_{u_{m}} - \ell_{v_{m}}) 
			\in P_{H} $.

            We now show that $\init(\alpha) $ is the first basis element appearing in $ \delta_{i}(\alpha)$.
			For a hyperplane $H_{1}$, denote the inclusion
			$ \iota_{H_{1}}\colon H_{\bullet}(K(\mathbb{Q}[\overline{\Star}(\rho_{H_{1}})]))
				\hookrightarrow \bigoplus_{H} H_{\bullet}(K(\mathbb{Q}[\overline{\Star}(\rho_{H})])) $.
            Then $ \delta_{i} $ is given by
			\[
                \delta_{i}(\alpha) = \sum_{m=1}^{i}(-1)^{m+1} \left[\iota_{\max(\mathscr{F}) \cup u_{m}} 
				\left( x_{\mathscr{F}} \otimes \bigwedge_{p\neq m} (\ell_{u_{p}} - \ell_{v_{p}}) \right) 
                -\iota_{\max(\mathscr{F}) \cup v_{m}} 
                \left( x_{\mathscr{F}} \otimes \bigwedge_{p\neq m} (\ell_{u_{p}} - \ell_{v_{p}}) \right)\right].
			\]
			By the definition of admissible wedges, $ \init(\alpha) $ is the only term in 
            $ \delta_{i}(\alpha) $ involving $ \max(\mathscr{F}) \cup u_{1}  $. 
            This is the lexicographically minimal hyperplane involved in $ \delta_{i}(\alpha) $.		
		\end{proof}

		\begin{lem}\label{lem:initInjective}
			Assume that Theorem \ref{thm:basis} holds in rank $r-1 $. Then
			$ \alpha \mapsto \init(\alpha) $ defines an injective map from basis elements 
			of $ H_{i>0}(K(\mathbb{Q}[U_{r-1,n}])) $ to basis elements of
			$ \bigoplus_{H} H_{i-1}(K(\mathbb{Q}[\overline{\Star}(\rho_{H})])) $.
		\end{lem}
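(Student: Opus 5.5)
The natural approach is to write down an explicit inverse on the image of $\init$. Lemma \ref{lem:initialBasis} already shows that $\init(\alpha)$ is a basis element. So it suffices to show that $\alpha$ can be reconstructed from the data of $\init(\alpha)$ together with the summand it lives in, namely the hyperplane $H$.

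Let $\alpha = x_{\mathscr{F}} \otimes \bigwedge_{m=1}^{i} (\ell_{u_m}-\ell_{v_m})$ be a basis element of $H_{i}(K(\mathbb{Q}[\Sigma_{U_{r-1,n}}]))$ with $i>0$. By the inductive hypothesis (Theorem \ref{thm:basis} in rank $r-1$), $\mathscr{F}$ is weakly retral in $\mathscr{L}(U_{r-1,n})$ and the wedge is admissible to $\mathscr{F}$. The element $\init(\alpha)$ lies in the summand indexed by $H = \max(\mathscr{F}) \cup u_1$, and it records the chain $\mathscr{F}$ and the wedge $\bigwedge_{m\ge 2}(\ell_{u_m}-\ell_{v_m})$. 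Since the summands for distinct hyperplanes are distinct, and the basis elements within a summand are distinct pure tensors, $\init(\alpha)=\init(\alpha')$ forces three equalities: the hyperplanes agree, the chains agree, $\mathscr{F}=\mathscr{F}'$, and the wedges $\bigwedge_{m\ge2}$ agree as basis elements of $P_H$.

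From $\mathscr{F}=\mathscr{F}'$ we get $\max(\mathscr{F})=\max(\mathscr{F}')$. Because $H\setminus \max(\mathscr{F})$ is a single element, this gives $u_1=u_1'$. Next, $v_1$ is determined as the successor of $u_1$ in $[n]\setminus\max(\mathscr{F})$ (admissibility condition (2)), so $v_1=v_1'$.

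For the remaining factors, the elements $u_2<\cdots<u_i$ belong to $[n]\setminus\max(\mathscr{F})$ and are all greater than $u_1$. The successor of $u_m$ in $[n]\setminus\max(\mathscr{F})$ agrees with its successor in $[n]\setminus H$ unless that successor is $u_1$. This cannot happen, since $u_m>u_1$. Hence the factors $\ell_{u_m}-\ell_{v_m}$ for $m\ge2$ are literally the same linear forms, whether read as part of $\alpha$ or as the element of $P_H$. So the identification of $\bigwedge_{m\ge2}$ with an element of $P_H$ is injective, and $\{(u_m,v_m)\}_{m\ge2}$ is recovered exactly. Therefore $\alpha=\alpha'$.

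The main point to check carefully is this last compatibility of the successor conventions in $[n]\setminus\max(\mathscr{F})$ versus $[n]\setminus H$. It is exactly where the ordering $u_1<u_2<\cdots$ from admissibility is used. I would also note explicitly that pure wedges in $P_H$ with distinct index data are distinct basis elements, so equality of basis elements means equality of the index data.
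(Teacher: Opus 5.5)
Your proposal is correct and matches the paper's proof: both show injectivity by recovering $\alpha$ from $\init(\alpha)$ and its hyperplane $H$, setting $u_1 = H\setminus\max(\mathscr{F})$, taking $v_1$ to be the successor of $u_1$ in $[n]\setminus\max(\mathscr{F})$, and reading the remaining factors off the wedge. Your extra comparison of successors in $[n]\setminus H$ and in $[n]\setminus\max(\mathscr{F})$ amounts to checking that the tail wedge lies in $P_H$, which Lemma \ref{lem:initialBasis} already establishes, so it does no harm but is not needed for injectivity itself.
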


		\begin{proof}
			If $ \alpha $ is a basis element of $ H_{i>0}(K(\mathbb{Q}[\Sigma_{U_{r-1,n}}])) $
			with $ \init(\alpha) = x_{\mathscr{F}} \otimes \bigwedge_{m=2}^{i} (\ell_{u_{m}} -
			\ell_{v_{m}}) $ in $ H_{i-1}(K(\mathbb{Q}[\overline{\Star}(\rho_{H})])) $, let
			$ u_{1} = H \setminus \max(\mathscr{F}) $ and
			$ v_{1} $ the successor of $ u_{1} $ in $ [n] \setminus \max(\mathscr{F}) $.
			Then we find that $  \alpha = x_{\mathscr{F}} \otimes \bigwedge_{m=1}^{i} (\ell_{u_{m}} -
						\ell_{v_{m}}) $.
		\end{proof}

		A consequence of Lemmas \ref{lem:initialBasis} and \ref{lem:initInjective} is
		that, for $ i > 0 $,
		\[ 
            \left\{ \delta_{i}(\alpha) : \textrm{$\alpha $ is a basis element of 
            $H_{i}(K(\mathbb{Q}[\Sigma_{U_{r-1,n}}]))$} \right\} 
		\]
        is a Gr\"{o}bner basis for $ \im \delta_{i} $. 

		\begin{lem}\label{lem:standardMonomials}
			Assume that Theorem \ref{thm:basis} holds in rank $ r -1 $.
			With respect to the above Gr\"{o}bner basis, the standard monomials for
            $ \coker \delta_{i} $ are the following basis elements of
			$ H_{i-1}(K(\mathbb{Q}[\overline{\Star}(\rho_{H})])) $.

            \noindent For $ i > 1 $,
            \[\bigcup_{H \in \mathscr{H}(U_{r,n})} \left\{ x_{\mathscr{F}} \otimes \bigwedge_{m=1}^{i-1}
										(\ell_{u_{m}} - \ell_{v_{m}}) 
										\in H_{i-1}(K(\mathbb{Q}[\overline{\Star}(\rho_{H})])) 
										\middle\vert 
										\begin{array}{l}\textrm{1. $H$ does not cover $ \max(\mathscr{F}) $}, or \\
                                    \textrm{2. $ u_{1} < H \setminus \max(\mathscr{F})$} \end{array} \right\}.\] 
            For $i = 1 $,
            \[ \bigcup_{H \in \mathscr{H}(U_{r,n})} \left\{ x_{\mathscr{F}}
							\in H_{0}(K(\mathbb{Q}[\overline{\Star}(\rho_{H})])) \middle\vert
									\begin{array}{l}\textrm{1. $H$ does not cover $ \max(\mathscr{F}) $, 
													or} \\ \textrm{2. $ H $ is the maximal cover of $ \max(\mathscr{F}) $ 
													in $ \mathscr{L}(U_{r,n}) $}
									\end{array}
                            \right\}.\]
		\end{lem}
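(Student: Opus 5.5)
Since the $\delta_{i}(\alpha)$ form a Gröbner basis of $\im \delta_{i}$ with respect to the chosen ordering (Lemmas \ref{lem:initialBasis} and \ref{lem:initInjective}), the standard monomials for $\coker \delta_{i}$ are exactly the basis elements of $\bigoplus_{H} H_{i-1}(K(\mathbb{Q}[\overline{\Star}(\rho_{H})]))$ that are \emph{not} of the form $\init(\alpha)$ for a basis element $\alpha$ of $H_{i}(K(\mathbb{Q}[\Sigma_{U_{r-1,n}}]))$. So I will describe the image of $\init$ precisely and show that its complement is the displayed set. Concretely, I fix a basis element $\beta = x_{\mathscr{F}} \otimes \bigwedge_{m=1}^{i-1}(\ell_{u_{m}} - \ell_{v_{m}})$ of $H_{i-1}(K(\mathbb{Q}[\overline{\Star}(\rho_{H})]))$ as in Lemma \ref{lem:hyperplaneBasis}, where $\mathscr{F}$ is retral in $\mathscr{L}(U_{r,n}^{H})$ and the wedge lies in $P_{H}$. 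I then ask when $\beta = \init(\alpha)$ for some $\alpha$. By the proof of Lemma \ref{lem:initInjective}, the only candidate is obtained by setting $u_{0} = H \setminus \max(\mathscr{F})$ (which requires $H$ to cover $\max(\mathscr{F})$) and letting $v_{0}$ be the successor of $u_{0}$ in $[n] \setminus \max(\mathscr{F})$. The candidate is
\[
\alpha = x_{\mathscr{F}} \otimes (\ell_{u_{0}} - \ell_{v_{0}}) \wedge \bigwedge_{m=1}^{i-1}(\ell_{u_{m}} - \ell_{v_{m}}).
\]

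Next I will check exactly when this candidate is a basis element of $H_{i}(K(\mathbb{Q}[\Sigma_{U_{r-1,n}}]))$, i.e.\ when $\mathscr{F}$ is weakly retral in $\mathscr{L}(U_{r-1,n})$ and the wedge is admissible. First, $\max(\mathscr{F})$ must be a hyperplane of $U_{r-1,n}$, which is equivalent to $H$ covering it. Weak retrality in $\mathscr{L}(U_{r-1,n})$ imposes conditions only below $\max(\mathscr{F})$, and these are inherited from retrality in $\mathscr{L}(U_{r,n}^{H})$, since the restrictions to flats $F_{i+1} \subseteq \max(\mathscr{F})$ agree. For admissibility, condition (1) holds automatically. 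Condition (2) needs $u_{0} < u_{1}$ and needs the successors $v_{m}$ to be computed in $[n]\setminus\max(\mathscr{F})$ rather than in $[n]\setminus H$. Condition (3) needs $u_{0} < \max(\mathscr{F}) \setminus F$ whenever $\max(\mathscr{F})$ covers $F \in \mathscr{F}$.

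The main obstacle is reconciling these conditions, and especially the successor subtlety, with the statement. If some $u_{m}$ has $v_{m} = $ successor across the removed element $u_{0}$, then the successor sets differ. However, $u_{0} < u_{1} < \cdots$ forces $u_{0}$ to precede every $u_{m}$, so the successor of each $u_{m}$ in $[n]\setminus \max(\mathscr{F})$ equals its successor in $[n]\setminus H$. Hence, when $u_{0} < u_{1}$, condition (2) reduces to that inequality; if $u_{1} < u_{0}$, no $\alpha$ exists. Condition (3) I expect to be automatic from retrality of $\mathscr{F}$ in $\mathscr{L}(U_{r,n}^{H})$. There, $\max(\mathscr{F})$ covering $F$ forces $\max(\mathscr{F})$ to be the maximal cover of $F$ inside $H = F \cup \{u_{0}, w\}$ with $w = \max(\mathscr{F}) \setminus F$, which gives $u_{0} < w$. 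I will verify this carefully using the definition of maximal cover.

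Putting this together for $i>1$: $\beta$ lies in the image of $\init$ iff $H$ covers $\max(\mathscr{F})$ and $H\setminus\max(\mathscr{F}) < u_{1}$. The complement is therefore the displayed set, noting that $u_{1} \neq H\setminus \max(\mathscr{F})$ because $u_{1} \notin H$. For $i=1$ the wedge of $\beta$ is empty, so $\alpha = x_{\mathscr{F}} \otimes (\ell_{u_{0}} - \ell_{v_{0}})$ and condition (2) reduces to $u_{0}$ having a successor in $[n]\setminus\max(\mathscr{F})$, i.e.\ $u_{0} \neq \max([n]\setminus \max(\mathscr{F}))$. Failure of this means exactly that $H$ is the maximal cover of $\max(\mathscr{F})$ in $\mathscr{L}(U_{r,n})$, which gives the second description.
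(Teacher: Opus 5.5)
Your proposal is correct and follows the paper's proof. Both arguments characterize the standard monomials as the basis elements not of the form $\init(\alpha)$, rebuild the unique candidate $\alpha$ by prepending $u_0 = H\setminus\max(\mathscr{F})$ and its successor, and check weak retrality and admissibility. In both, condition (3) comes from retrality of $\mathscr{F}$ in $\mathscr{L}(U_{r,n}^{H})$, and for $i=1$ existence of the successor is equivalent to $H$ not being the maximal cover. Your explicit check that successors in $[n]\setminus H$ and in $[n]\setminus\max(\mathscr{F})$ agree once $u_0<u_1$ fills in a detail the paper leaves implicit.
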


		\begin{proof}
			By definition, the standard monomials are the basis elements of $ \bigoplus_{H} 
			H_{\bullet}(K(\mathbb{Q}[\overline{\Star}(\rho_{H})])) $ which are not
			$ \init(\alpha) $ for any basis element $ \alpha \in H_{>0}(K(\mathbb{Q}[\Sigma_{U_{r-1,n}}])) $.
            We will prove that the basis elements of the form $ \init(\alpha) $ are precisely those not listed in
            the statement.

			For notation, write $ \alpha = x_{\mathscr{F}} \otimes \bigwedge_{m=1}^{i} (\ell_{u_{m}} -\ell_{v_{m}}) $
			for a basis element of $ H_{i}(K(\mathbb{Q}[\Sigma_{U_{r-1,n}}])) $, and
			let $ H = \max(\mathscr{F}) \cup u_{1} $. 

			\textbf{Case $ i > 1 $:} We have
			$ \init(\alpha) = x_{\mathscr{F}}\otimes \bigwedge_{m=2}^{i} (\ell_{u_{m}} - \ell_{v_{m}}) 
			\in H_{i-1}(K(\mathbb{Q}[\overline{\Star}(\rho_{H})])) $.
			Then $ H $ covers $ \max(\mathscr{F}) $ and $ u_{2} \geq H \setminus \max(\mathscr{F}) $.

			Conversely, suppose we have a basis element $ x_{\mathscr{F}} \otimes \bigwedge_{m=2}^{i} (\ell_{u_{m}}-
			\ell_{v_{m}}) $  in $ H_{i-1}(K(\mathbb{Q}[\overline{\Star}(\rho_{H})])) $ with
			$ H $ covering $ \max(\mathscr{F}) $ and $ u_{2} \geq H \setminus \max(\mathscr{F}) $. Then
            define $ u_{1} = H \setminus \max(\mathscr{F}) $. We claim that
			$ \alpha = x_{\mathscr{F}} \otimes \bigwedge_{m=1}^{i}(\ell_{u_{m}} - \ell_{v_{m}}) $
			is a basis element of $ H_{\bullet}(K(\mathbb{Q}[\Sigma_{U_{r-1,n}}])) $ with
			$ \init(\alpha) = x_{\mathscr{F}} \otimes \bigwedge_{m=2}^{i} (\ell_{u_{m}}-
            \ell_{v_{m}}) $. To see that it is a basis element, note that $ \max(\mathscr{F}) $ is a hyperplane
            of $ U_{r-1,n} $, as it is covered by $H$, a hyperplane of $U_{r,n}$. As $ \mathscr{F} $ is retral in $ U_{r,n}^{H} $, 
            $ \mathscr{F} $ is weakly retral in $ U_{r-1,n} $. The wedge $ \bigwedge_{m=1}^{i}(\ell_{u_{m}} - \ell_{v_{m}}) $
            is admissible to $ \mathscr{F} $, as if $ \max(\mathscr{F}) $ covers a flat $F$ in $ \mathscr{F} $, then
            $ \max(\mathscr{F}) \setminus F > H \setminus \max(\mathscr{F}) = u_{1} $ because $ \mathscr{F} $ is retral
            in $U_{r,n}^{H} $. Finally, it is clear that
            $ \init(\alpha) = x_{\mathscr{F}} \otimes \bigwedge_{m=2}^{i} (\ell_{u_{m}}- \ell_{v_{m}}) $.

			\textbf{Case $ k =1 $:}
			We have  $ \init(\alpha) = x_{\mathscr{F}} \in H_{0}(K(\mathbb{Q}[\overline{\Star}(\rho_{H})])) $.
			Then $ H $ covers $ \max(\mathscr{F}) $, but it is not the maximal cover 
			of $ \max(\mathscr{F}) $ in $ \mathscr{L}(U_{r,n}) $. In particular,
			$ \max(\mathscr{F}) \cup v_{1} $ is a cover that is lexicographically greater than $H$.

			Conversely, suppose that $ x_{\mathscr{F}} $ is a basis element of 
			$ H_{0}(K(\mathbb{Q}[\overline{\Star}(\rho_{H})])) $ such that $H$ covers $ \max(\mathscr{F}) $
			and $ H $ is not the maximal cover of $ \max(\mathscr{F}) $ in $ \mathscr{L}(U_{r,n}) $.
			Let $ u_{1} = H \setminus \max(\mathscr{F}) $, and let $ v_{1} $ be the successor of 
			$ u_{1} $ in $ [n] \setminus \max(\mathscr{F}) $. This successor exists because $H$ is not 
			the maximal cover of $ \mathscr{F} $. We claim that $ \alpha = 
			x_{\mathscr{F}} \otimes (\ell_{u_{1}} - \ell_{v_{1}}) $ is a basis element of 
			$H_{1}(K(\mathbb{Q}[\Sigma_{U_{r-1,n}}])) $.
			It suffices to check that if $  \max(\mathscr{F}) $ covers $ F $ in $ \mathscr{F} $,
			then $ u_{1} < \max(\mathscr{F}) \setminus F $.
			Suppose this is the case.
			As $ \mathscr{F} $ is retral in $ \mathscr{L}(U_{r,n}^{H}) $,
			$ \max(\mathscr{F}) $ is the maximal cover of $ F $ in $ \mathscr{L}(U_{r,n}^{H}) $.
			This means that $ \max(\mathscr{F}) \setminus F > H \setminus \max(\mathscr{F}) = u_{1} $.
            Clearly, $ \init(\alpha) = x_{\mathscr{F}} \in H_{0}(K(\mathbb{Q}[\overline{\Star}(\rho_{H})])) $.
			\end{proof}

		We now deduce our main result.

		\begin{proof}[Proof of Theorem \ref{thm:basis}]
				Inducting on the rank $ r $, the base case is Lemma \ref{lem:baseCase}.
				By Lemma \ref{lem:initInjective}, the connecting homomorphism
				$ \delta_{i}\colon H_{i}(K(\mathbb{Q}[\Sigma_{U_{r-1,n}}])) 
						\to \bigoplus_{H} H_{i-1}(K(\mathbb{Q}[\overline{\Star}(\rho_{H})])) $
				is injective for $ i \geq 1 $. For $ i = 0 $, the connecting homomorphism
				vanishes. Therefore, 
				$ H_{\bullet}(K(\mathbb{Q}[\Sigma_{U_{r,n}}])) \cong
				H_{0}(K(\mathbb{Q}[\Sigma_{U_{r-1,n}}])) \oplus \bigoplus_{i \geq 1} \coker \delta_{i} $.
				To get a basis, we pull back the basis for $ H_{0}(K(\mathbb{Q}[\Sigma_{U_{r-1,n}}])) $
				and push forward the standard monomials from Lemma \ref{lem:standardMonomials}.
				One verifies that the basis elements from the pull-back are
					\[ \left\{ x_{\mathscr{F}} \mid \textrm{$ \mathscr{F} $ retral, not containing a hyperplane}
						\right\}, \]
				and the basis elements from the push-forward are
				\[ 
					\left\{ x_{\mathscr{F}} \mid \textrm{$ \mathscr{F} $ retral, containing a hyperplane}
						\right\} \cup 
					\left\{ x_{\mathscr{F}} \otimes \xi \mid \textrm{$ \mathscr{F} $ weakly
					retral and $ \xi $ admissible to $ \mathscr{F} $} \right\}. 
				\]
			Together, these elements form the desired basis.
		\end{proof}

		\begin{cor}\label{cor:basisChow}
			The set
			\[ 
				\left\{ x_{\mathscr{F}} : \textrm{$\mathscr{F} $ retral in $ \mathscr{L}(U_{r,n})$}
						\right\}
			\]
            is a basis for the Chow ring $ A^{\bullet}(X_{\Sigma_{U_{r,n}}}) $ as a $ \mathbb{Q}$-vector space.
		\end{cor}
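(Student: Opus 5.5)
The plan is to obtain Corollary \ref{cor:basisChow} by restricting Theorem \ref{thm:basis} to homological degree $0$ and identifying that piece with the Chow ring.

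First, recall from the introduction that $A^{\bullet}(X_{\Sigma_{U_{r,n}}})$ is the quotient of $\mathbb{Q}[\Sigma_{U_{r,n}}]$ by the linear forms $\sum_{i \in F} x_F - \sum_{j \in G} x_G$. These are exactly the images $d(\ell_i - \ell_j)$ under the Koszul differential. Since $K_0(\mathbb{Q}[\Sigma_{U_{r,n}}]) = \mathbb{Q}[\Sigma_{U_{r,n}}]$ and every element of $K_0$ is a cycle, we have
\[ H_0(K(\mathbb{Q}[\Sigma_{U_{r,n}}])) = \mathbb{Q}[\Sigma_{U_{r,n}}] / d\bigl(K_1\bigr). \]
The image $d(K_1)$ is the ideal generated by the $d(\ell_i - \ell_j)$, because $d$ satisfies the Leibniz rule and $d(x_F) = 0$, so $d(f \otimes m) = f \cdot d(m)$. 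Hence $H_0(K(\mathbb{Q}[\Sigma_{U_{r,n}}])) \cong A^{\bullet}(X_{\Sigma_{U_{r,n}}})$, compatibly with the internal grading.

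Next, the Koszul homology is bigraded, and the basis in Theorem \ref{thm:basis} consists of bihomogeneous elements. A basis of a bigraded space made of homogeneous elements restricts to a basis of each homological component. The elements $x_{\mathscr{F}}$ with $\mathscr{F}$ retral lie in $H_0$. The elements $x_{\mathscr{F}} \otimes \xi$ lie in $H_i$ with $i \geq 1$, because admissible wedges live in $\bigwedge^{i>0} N_{\mathbb{Q}}^{\vee}$ by Definition \ref{def:admissible}. Therefore the retral monomials alone form a basis of $H_0$, and so of the Chow ring.

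There is no substantial obstacle here. The only step needing care is the identification of $H_0$ with the Feichtner--Yuzvinsky presentation. That amounts to noting that the linear forms generated by the $d(\ell_i - \ell_j)$ span the image of $N_{\mathbb{Q}}^{\vee} \to \mathbb{Q}[\Sigma_{U_{r,n}}]$. This holds because the $\ell_i - \ell_j$ span $N_{\mathbb{Q}}^{\vee}$.
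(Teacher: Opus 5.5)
Your proposal is correct and matches the paper's approach: the corollary is exactly the homological-degree-$0$ part of Theorem \ref{thm:basis}, using $H_0(K(\mathbb{Q}[\Sigma_{U_{r,n}}])) \cong A^{\bullet}(X_{\Sigma_{U_{r,n}}})$ and the fact that every element $x_{\mathscr{F}} \otimes \xi$ lies in positive homological degree. Your explicit check that $d(K_1)$ is the Feichtner--Yuzvinsky ideal spells out the identification of $H_0$ with the Chow ring, which the paper only states in the introduction.
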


        \begin{example}
            The Betti diagram for the  singular cohomology of $ U_{3,4} $ is 
            \vspace*{1em}
            \begin{center}
                \begin{tabular}{c|cc}
                     & 0 & 1 \\ \hline
                    0 & 1 & 0  \\
                    1 & 7 & 6 \\
                    2 & 1 & 3
                \end{tabular}
            \end{center}
            \vspace*{1em}
            The basis we have given for $ H_{\bullet}(K(\mathbb{Q}[\Sigma_{U_{3,4}}])) $ is
            \begin{center}
            \setlength{\tabcolsep}{8pt}
            \renewcommand{\arraystretch}{1.5}
                \begin{tabular}{c|cc}
                    & $0$ & $1$ \\ \hline
                    $0$ & $1$ & - \\  
                    $1$ & $ x_{4} $, $ x_{12} $, $ x_{13} $, $ x_{14} $, $ x_{23} $, $ x_{24} $, $ x_{34} $ &
                    \makecell{$ x_{12} \otimes (\ell_{3} - \ell_{4}) $, $ x_{13} \otimes (\ell_{2} - \ell_{4}) $,
                    $ x_{14} \otimes (\ell_{2} - \ell_{3}) $,\\ $ x_{23} \otimes (\ell_{1} - \ell_{4}) $, 
                    $ x_{24} \otimes (\ell_{1} - \ell_{3}) $, $ x_{34} \otimes (\ell_{1} - \ell_{2}) $} \\
                    $2$ & $ x_{4} x_{34} $ & $x_{3} x_{23} \otimes (\ell_{1} -\ell_{4}) $, $ x_{4} x_{24} \otimes (\ell_{1} - \ell_{3})$,
                    $ x_{4} x_{34} \otimes (\ell_{1} - \ell_{2}) $
                \end{tabular}
            \end{center}
            \vspace*{1em}
            arranged according to this Betti diagram.
        \end{example}

\section{Combinatorics of retral chains}\label{section:combinatorics}

    In this section, we study the combinatorics of retral and weakly retral chains in $ \mathscr{L}(U_{r,n}) $. From this
    we produce multiple formulas for the refined Hodge--Poincar\'{e} polynomials. When restricted to the
    special case of Chow polynomials, these formulas give combinatorial proofs for results of 
    Hoster \cite{hoster}, Hameister, Rao, and Simpson \cite{HRS}, and Ferroni et al. \cite{FMSV}.
    	
    \subsection{Rank-selected retral chains}
	We first count retral chains whose flats are of specified rank. This produces 
    a combinatorial proof of a formula for the Chow polynomial of a uniform matroid due to Hoster \cite{hoster}.

	\begin{definition}
		Let $ \mathscr{F} $ be a chain of proper flats of $ \mathscr{L}(U_{r,n}) $. 
		We define the \emph{ranks} of $ \mathscr{F} $ to be the subset
		$ \rks(\mathscr{F}) = \left\{ j \in \left\{0, \dots, r-1\right\} : 
		\textrm{$\mathscr{F} $ contains a flat of rank $j $} \right\} $.
        For $ i \in \rks(\mathscr{F}) $, we write $ F^{i} $ for the flat in $ \mathscr{F} $ of rank $i$.
	\end{definition}

	Recall our convention that every chain $ \mathscr{F} $ contains the empty flat.

	\begin{definition}
			Let $ R  \subseteq \left\{ 0, \dots, r-1 \right\} $ be a subset containing $0$.
			Define $ C_{R}(U_{r,n}) $ to be the set of retral chains $ \mathscr{F} $ in
			$ \mathscr{L}(U_{r,n}) $ such that 
			$ \rks(\mathscr{F}) = R $.
	\end{definition}

	For $ R \subseteq \left\{ 0, \dots, r-1 \right\} $,
	partition $ R = R_{1} \sqcup \cdots \sqcup R_{s} $ into its maximal consecutive subsets,
	ordered so that $ \min R_{i}  < \min R_{i+1} $. 
	Define the multinomial coefficient
		\[ 
				\binom{n}{\Delta R} = \binom{n}{\min R_{1} - 0, \min R_{2} - \min R_{1}, 
				\dots, \min R_{s} - \min R_{s-1}}
		\]
    and the index $ \lfloor R \rfloor = \min R_{s} $.

	One way to construct a retral chain $ \mathscr{F} \in C_{R}(U_{r,n}) $ is to choose an arbitrary
	chain of flats $ \mathscr{G} $ with $ \rks(\mathscr{G}) = \left\{ \min R_{1}, \dots, \min R_{s} \right\} $
	and then uniquely extend using the retral condition.

	\begin{lem}\label{lem:rankSelectedRetralConstruction}
		Let $ R $ be a subset of $ \left\{0, \dots, r-1 \right\} $ which contains $0$, and
		partition $ R = R_{1} \sqcup \cdots \sqcup R_{s} $ as above. 
		Given a chain of flats $ \mathscr{G} \subseteq \mathscr{L}(U_{r,n}) $ with
		$ \rks(\mathscr{G}) = \left\{ \min R_{1}, \dots, \min R_{s} \right\} $,
		there is a unique $ \mathscr{F} \in C_{R}(U_{r,n}) $ which extends $ \mathscr{G} $.
	\end{lem}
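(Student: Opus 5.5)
The plan is to build $\mathscr{F}$ block by block and show that at every step the retral condition forces a single choice. In a uniform matroid, a proper flat of rank $k$ is simply a $k$-subset of $[n]$. So a chain with ranks $R$ consists of nested subsets whose sizes are the elements of $R$. Fix a block $R_{t}=\{a, a+1, \dots, b\}$ with $a=\min R_{t}$. The flat $F^{a}$ is prescribed by $\mathscr{G}$, and we must produce $F^{a+1}, \dots, F^{b}$. Inside the chain, $F^{a+1},\dots,F^{b}$ each cover their predecessor. Since $a-1\notin R$ for $t>1$ (and $a=0$ starts the chain when $t=1$), $F^{a}$ does not cover its predecessor. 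Also $b+1\notin R$, so the flat following $F^{b}$ is $F^{\min R_{t+1}}$, or $[n]$ if $t=s$, and it does not cover $F^{b}$. The one exception is $b=r-1$, where $F^{b}$ is a hyperplane and its successor $[n]$ sits at rank $r$. I will read the definition as treating $F_{s+1}=[n]$ uniformly, so this case is handled the same way. The retral condition in Definition \ref{def:retralChains} therefore says two things. For each $a<k\le b$, $F^{k}$ must be the maximal cover of $F^{k-1}$ in $\mathscr{L}(U_{r,n}^{F^{k+1}})$, where $F^{b+1}$ means the next flat in the chain. And the non-covering steps impose the condition that $F^{a}$ is \emph{not} the maximal cover of its predecessor in the relevant restriction, which is vacuous because the "only if" direction applies only when covering occurs. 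Here I need to check the exact logical form: the condition is "$F_i$ covers $F_{i-1}$ iff $F_i$ is the maximal cover of $F_{i-1}$ in $\mathscr{L}(U^{F_{i+1}}_{r,n})$". When $F_i$ does not cover $F_{i-1}$, the right side is automatically false, so no constraint arises.

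The key computation is to solve these maximal-cover conditions from the top of each block downward. Write $T$ for the flat after $F^{b}$, so $T=F^{\min R_{t+1}}$ or $[n]$. The condition at $k=b$ is that $F^{b}=F^{b-1}\cup \max(T\setminus F^{b-1})$. Then at $k=b-1$, $F^{b-1}=F^{b-2}\cup\max(F^{b}\setminus F^{b-2})$, and so on. I claim the unique solution is the following. Let $S=T\setminus F^{a}$, which has size $|T|-a\ge b-a+1$ since $T$ has rank greater than $b$. Then $F^{a+j}=F^{a}\cup\{$the $j$ largest elements of $S\}$. To verify this, each $F^{k}\setminus F^{k-1}$ must be the maximum of $F^{k+1}\setminus F^{k-1}$, which is a two-element set for $k<b$ and is $T\setminus F^{b-1}$ for $k=b$. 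By downward induction this forces $F^{b}\setminus F^{b-1}=\max S'$, where $S'$ is what remains of $S$. Unwinding the recursion shows that the added elements must be the top $b-a$ elements of $S$, listed in increasing order as $k$ increases. I will prove uniqueness by noting that the conditions for $a<k\le b$ are equivalent to requiring $F^{k}\setminus F^{k-1}>$ every element of $T\setminus F^{k}$, i.e.\ each newly added element exceeds all elements of $S$ not yet added. This is equivalent to taking the top elements of $S$ in increasing order.

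There is a subtlety: the blocks interact, because $T$ for block $t$ is $F^{\min R_{t+1}}$, which belongs to $\mathscr{G}$ and is not changed by the extension. So the blocks can be filled in independently, each depending only on $\mathscr{G}$, and the resulting sets are nested correctly since $F^{b}\subseteq T$. Existence then follows: the constructed chain has ranks exactly $R$, contains $\mathscr{G}$, and satisfies all the retral conditions. Uniqueness follows because any $\mathscr{F}\in C_{R}(U_{r,n})$ extending $\mathscr{G}$ must satisfy the same forced equations. The main obstacle I expect is the careful bookkeeping of which covering relations occur: the first flat of each block does not cover its predecessor, and the hyperplane case $b=r-1$ needs care with the convention $F_{s+1}=[n]$. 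The combinatorial solving itself is routine once the condition is rewritten as "each added element exceeds all remaining elements of $T\setminus F^{k}$."
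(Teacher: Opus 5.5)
Your proposal is correct and matches the paper's proof. Within each block $R_t=\{a,\dots,b\}$ the non-covering steps impose nothing, and the maximal-cover conditions force the added elements to be the $b-a$ largest elements of $T\setminus F^{a}$, where $T$ is the next flat of $\mathscr{G}$ (or $[n]$); so each block is determined by $\mathscr{G}$ alone.

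One correction: these elements are added in \emph{decreasing} order as $k$ increases. For example, $\emptyset<4<34$ is retral in $\mathscr{L}(U_{3,4})$ but $\emptyset<3<34$ is not. This is exactly what your formula $F^{a+j}=F^{a}\cup\{\text{the $j$ largest elements of } S\}$ and your criterion ``each newly added element exceeds all elements not yet added'' say. So both places where you wrote ``increasing order'' should read ``decreasing order''.
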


	\begin{proof}
        A chain of proper flats $ \mathscr{F} $ with $ \rks(\mathscr{F}) = R $ is retral if and only if each sequence of integers
		$ (F^{\min R_{j} + 1} \setminus F^{\min R_{j}}, \cdots, F^{\max R_{j}} \setminus F^{\max R_{j}-1}) $ is decreasing and 
		$ F^{\max R_{j}} \setminus F^{\max R_{j}-1} $ equals the maximal element
		of $ F^{\min R_{j+1}} \setminus F^{\max R_{j}-1} $ (here we use the convention that
		$ [n] = F^{\min R_{s+1}} $). This is satisfied if and only if $ (F^{\min R_{j} + 1} \setminus F^{\min R_{j}}, \cdots, 
		F^{\max R_{j}} \setminus F^{\max R_{j}-1}) $ is the sequence of the $ (\max R_{j} - \min R_{j}) $ largest
		elements of $ F^{\min R_{j+1}} \setminus F^{\min R_{j}} $ written in decreasing order. Therefore, there is
        exactly one such $ \mathscr{F} \in C_{R}(U_{r,n}) $ which extends $ \mathscr{G} $, as $ \mathscr{G} $ determines each
        $ F^{\min R_{i}} $.
	\end{proof}

		The number of chains $ \mathscr{G} \subseteq \mathscr{L}(U_{r,n}) $
        with $ \rks(\mathscr{G}) = \left\{ \min R_{1}, \dots, \min R_{s} \right\} $ is given by
		$ \binom{n}{\Delta R} $. As a consequence, we have the following count of rank-selected retral chains.
		
		\begin{prop}\label{prop:rankSelectedRetralChains}
			Let $ R $ be a subset of $ \left\{ 0, \dots, r-1 \right\} $ which contains
			$0$. Then $ \left| C_{R}(U_{r,n}) \right| = \binom{n}{\Delta R} $.
		\end{prop}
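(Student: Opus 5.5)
The proof is a bijection followed by a count. By Lemma \ref{lem:rankSelectedRetralConstruction}, the map $\mathscr{F} \mapsto \mathscr{G} = \{F^{\min R_{1}}, \dots, F^{\min R_{s}}\}$ is a bijection. It sends $C_{R}(U_{r,n})$ to the set of chains of flats $\mathscr{G}$ in $\mathscr{L}(U_{r,n})$ with $\rks(\mathscr{G}) = \{\min R_{1}, \dots, \min R_{s}\}$. The lemma supplies the inverse: each such $\mathscr{G}$ extends to exactly one element of $C_{R}(U_{r,n})$, and restricting that element to the ranks $\min R_{j}$ returns $\mathscr{G}$. Injectivity of the forward map is the uniqueness part of the lemma, since the flats of $\mathscr{F}$ in each block $R_{j}$ are determined by $F^{\min R_{j}}$ and $F^{\min R_{j+1}}$. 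Hence $|C_{R}(U_{r,n})|$ equals the number of such chains $\mathscr{G}$.

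It remains to count the chains $\mathscr{G}$. Every subset of $[n]$ of size at most $r-1$ is a flat of $U_{r,n}$, and every $\min R_{j} \leq r-1$. So a chain $\emptyset = G_{1} \subsetneq G_{2} \subsetneq \cdots \subsetneq G_{s}$ with $|G_{j}| = \min R_{j}$ is just a nested sequence of subsets of these sizes. We can choose it successively: at step $j$ we pick the $\min R_{j} - \min R_{j-1}$ new elements $G_{j} \setminus G_{j-1}$ from the remaining $n - \min R_{j-1}$ elements of $[n]$. Multiplying the binomial coefficients gives the multinomial coefficient $\binom{n}{\Delta R}$. Here the multinomial is read with an implicit final part $n - \min R_{s}$, which accounts for the unused elements; the first part $\min R_{1} - 0 = 0$ contributes nothing.

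The main point to check is the conventions rather than any real obstacle. The chain always contains $\emptyset$ because $0 \in R$. The flats are proper because every element of $R$ is at most $r-1$, so no top flat is included. Finally, when $R_{s}$ reaches $r-1$ the lemma still applies with $F^{\min R_{s+1}} = [n]$, so the bijection holds in that case too. With these checked, the equality $|C_{R}(U_{r,n})| = \binom{n}{\Delta R}$ follows.
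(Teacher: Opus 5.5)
Your proposal is correct and follows the paper's argument: Lemma \ref{lem:rankSelectedRetralConstruction} gives a bijection between $C_{R}(U_{r,n})$ and chains $\mathscr{G}$ with $\rks(\mathscr{G}) = \{\min R_{1}, \dots, \min R_{s}\}$, and these chains are counted by the multinomial coefficient $\binom{n}{\Delta R}$. You also correctly read that coefficient as carrying the implicit final part $n - \min R_{s}$.
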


		The basis of Theorem \ref{thm:basis} thus gives a combinatorial proof, in terms of retral chains of flats, of 
        the following result of Hoster \cite[Theorem 3]{hoster}\footnote{Our indices differ by a shift from Hoster's.}.

		\begin{thm}
            For $ r \geq 1 $,
			\[ 
				\underline{\H}_{U_{r,n}}(x) = \sum_{\substack{R \subseteq \{0,\dots, r-1\} \\
						0 \in R}} \binom{n}{\Delta R} \cdot x^{\left| R \right| -1}.
			\]
		\end{thm}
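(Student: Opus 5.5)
The plan is to read off the Chow polynomial directly from the basis of Corollary \ref{cor:basisChow}, and then count the basis elements degree by degree using Proposition \ref{prop:rankSelectedRetralChains}. Recall that $\underline{\H}_{U_{r,n}}(x) = \underline{\H}^{0}_{U_{r,n}}(x) = \sum_{j} \dim H_{0}(K(\mathbb{Q}[\Sigma_{U_{r,n}}]))_{j}\, x^{j}$. By Theorem \ref{thm:basis}, a basis of $H_{0}$ is $\{x_{\mathscr{F}} : \mathscr{F} \text{ retral}\}$. The wedge-containing elements $x_{\mathscr{F}}\otimes\xi$ have $\xi \in \bigwedge^{>0}$, so they live in $H_{i}$ with $i>0$ and do not contribute.

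The next step is to determine the bidegree of each basis element. For a chain $\mathscr{F} = \{\emptyset = F_0 < F_1 < \cdots < F_s\}$, the monomial $x_{\mathscr{F}} = x_{F_1}\cdots x_{F_s}$ lies in $\mathbb{Q}[\Sigma]_{s} = K_0(\mathbb{Q}[\Sigma])_{s}$, recalling the convention $x_{\emptyset}=1$. The chain always contains $\emptyset$ and contains exactly one flat of each rank in $\rks(\mathscr{F})$. Hence $s = |\rks(\mathscr{F})| - 1$, and $x_{\mathscr{F}}$ contributes $x^{|\rks(\mathscr{F})|-1}$ to the Chow polynomial.

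Finally, partition the retral chains according to $R = \rks(\mathscr{F})$. Here $R$ ranges over the subsets of $\{0,\dots,r-1\}$ containing $0$: every chain contains $\emptyset$, which has rank $0$, and all flats in the chain are proper. The retral chains are therefore the disjoint union of the sets $C_R(U_{r,n})$. Proposition \ref{prop:rankSelectedRetralChains} gives $|C_R(U_{r,n})| = \binom{n}{\Delta R}$, and summing $\binom{n}{\Delta R}\, x^{|R|-1}$ over these $R$ yields exactly the stated formula.

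There is no real obstacle, since all the content is in Theorem \ref{thm:basis} and Proposition \ref{prop:rankSelectedRetralChains}. The only care needed is the bookkeeping of the grading: one must check that the internal degree $j$ of $x_{\mathscr{F}} \in H_0(K)_j$ equals the number of nonempty flats in $\mathscr{F}$. One must also check that with $i=0$ the exponent in $\underline{\H}^{0}$ is $j - 0 = j$, so no shift is needed; this is why the formula uses $|R|-1$ rather than Hoster's indexing.
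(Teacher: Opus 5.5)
Your proposal is correct and is exactly the paper's argument. You read the Chow polynomial off the retral-chain basis of Corollary~\ref{cor:basisChow}, where $x_{\mathscr{F}}$ sits in $K_{0}(\mathbb{Q}[\Sigma_{U_{r,n}}])_{s}$ with $s = |\rks(\mathscr{F})|-1$, and you count basis elements rank set by rank set using Proposition~\ref{prop:rankSelectedRetralChains}, handling correctly the grading bookkeeping that the paper leaves implicit.
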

		
	\subsection{Rank-selected weakly retral chains and admissible wedges}
		Our next goal is count the other basis elements
		$ x_{\mathscr{F}} \otimes \xi $, where $ \mathscr{F} $ is a weakly
		retral chain in $ \mathscr{L}(U_{r,n}) $ with prescribed ranks and $ \xi $ is admissible to 
		$ \mathscr{F} $.

		\begin{definition}
			Let $ R $ be a subset of $ \left\{0, \dots, r-1 \right\} $ which contains
			$ 0 $ and $ r- 1 $. Define $ W_{R, i}(U_{r,n}) $ to be the
			set of pairs $ (\mathscr{F}, \xi) $ such that $ \mathscr{F} $ is a weakly
			retral chain in $ \mathscr{L}(U_{r,n}) $, $ \rks(\mathscr{F}) = R $, and
			$ \xi \in \bigwedge^{i} N_{\mathbb{Q}}^{\vee} $  is admissible to $ \mathscr{F} $.
		\end{definition}

        Our method of constructing pairs $ (\mathscr{F}, \xi) \in W_{R,i}(U_{r,n}) $ is similar to our construction
        of elements of $ C_{R}(U_{r,n}) $ in that we will begin with an arbitrary chain $ \mathscr{G} $ with $ \rks(\mathscr{G}) = 
        \left\{ \min R_{1}, \dots, \min R_{s} \right\} $. However, this is not enough to uniquely determine
        $ (\mathscr{F}, \xi) $ from $ \mathscr{G} $, as there is some freedom in the choice of hyperplane of $ \mathscr{F} $, and
        $ \mathscr{G} $ imposes few restrictions on $ \xi = \bigwedge^{i}_{m=1} (\ell_{u_{m}} - \ell_{v_{m}})$. Our method of determining elements
        $ (\mathscr{F}, \xi) \in W_{R,i}(U_{r,n}) $ is outlined as follows:
        \begin{enumerate}
            \item Choose an arbitrary chain $ \mathscr{G} $ with $ \rks(\mathscr{G}) = \left\{ \min R_{1}, \dots, \min R_{s} \right\} $,
            \item choose $u_{1} $,
            \item choose the hyperplane of $ \mathscr{F} $,
            \item choose the remaining $ u_{2}, \dots, u_{i} $,
        \end{enumerate}
        then extend to a unique pair $ (\mathscr{F}, \xi) $.
        The choice in (1) will be arbitrary, but there will be some restrictions in (2)--(4).

        \begin{lem}\label{lem:rankSelectedWeaklyRetral}
			Let $R $ be a subset of $ \left\{ 0, \dots, r-1 \right\} $ which contains
			$ 0 $ and $ r-1 $. Partition $ R = R_{1} \sqcup \cdots \sqcup R_{s} $ as above. 		
			Given 
				\begin{enumerate}
					\item a chain of flats $ \mathscr{G} \subseteq \mathscr{L}(U_{r,n}) $ with $ \rks(\mathscr{G})
						  = \left\{ \min R_{1}, \dots, \min R_{s} \right\} $,
                    \item an integer $ 1 \leq \ell \leq n-r-i+1 $, 
                    \item an $ (r - \lfloor R \rfloor -1) $ element subset $F \subseteq [n] \setminus G^{\lfloor R \rfloor} $ 
                            with $ \min F $ greater than the $ \ell$th smallest element of $[n] \setminus G^{\lfloor R \rfloor} $,
                        \item an $ (i-1) $-element subset $I \subseteq [n] \setminus (G^{\lfloor R \rfloor} \cup F)$  with
                            $ \min I $ greater than the $\ell$th smallest element of $ [n] \setminus G^{\lfloor R \rfloor} $ and
                            $ \max I < \max [n] \setminus (G^{\lfloor R \rfloor} \cup F)$,
				\end{enumerate}
                there is a unique pair $ (\mathscr{F}, \xi= \bigwedge_{m=1}^{i} (\ell_{u_{m}} - \ell_{v_{m}})) \in W_{R,i}(U_{r,n}) $ such that
                \begin{enumerate}[label = (\alph*)]
                                \item $ \max(\mathscr{F}) = F \cup G^{\lfloor R \rfloor} $,
								\item $ \mathscr{F} $ extends $ \mathscr{G} $,
                                \item $ u_{1} $ is the $\ell$th smallest element of $[n] \setminus G^{\lfloor R \rfloor} $,
                                \item $ I = \left\{u_{2}, \dots, u_{i} \right\} $.
				\end{enumerate}
		\end{lem}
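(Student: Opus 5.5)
We construct the pair explicitly and then check that the choices made are forced.

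\textbf{Hyperplane and wedge.} Set $H = F \cup G^{\lfloor R \rfloor}$. Since $|F| = r - \lfloor R\rfloor - 1$, $H$ has rank $r-1$, so (a) forces $\max(\mathscr{F}) = H$. Condition (3) gives $H \cap [n]\setminus G^{\lfloor R\rfloor} = F$, and the $\ell$th smallest element $u_1$ of $[n]\setminus G^{\lfloor R\rfloor}$ is smaller than $\min F$, so $u_1 \notin H$. Thus (c) and (d) determine $u_1 < u_2 < \cdots < u_i$, all in $[n]\setminus H$. By condition (4), $\min I > u_1$ and each $u_m$ is not the maximum of $[n]\setminus H$. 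Hence every $u_m$ has a successor $v_m$ in $[n]\setminus H$, and admissibility condition (2) forces $v_m$. This fixes $\xi$ uniquely, and conditions (1) and (2) of Definition \ref{def:admissible} hold.

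\textbf{The chain.} Below $H$, $\mathscr{F}$ must contain the flats $G^{\min R_j}$ of $\mathscr{G}$. The flats at ranks in $R_j \setminus\{\min R_j\}$ are forced exactly as in Lemma \ref{lem:rankSelectedRetralConstruction}. We apply that argument with $[n]$ replaced by the next flat up and note the following points.
\begin{itemize}
\item For blocks $R_j$ with $j<s$, the weakly retral condition is identical to the retral condition. So the flats are the largest elements of $G^{\min R_{j+1}} \setminus G^{\min R_j}$ adjoined in decreasing order.
\item For the last block $R_s$, which contains $r-1$, the flats between $G^{\lfloor R\rfloor}$ and $H$ have ranks $\lfloor R\rfloor+1,\dots,r-2$. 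For these the covering condition is taken relative to the next flat, which ends at $H$. So they are obtained by adjoining the elements of $F$ in decreasing order, omitting $\min F$, which is the last element added in reaching $H$.
\item No maximal-cover condition is imposed on $H$ itself.
\end{itemize}
This gives existence and uniqueness of a weakly retral $\mathscr{F}$ with $\rks(\mathscr{F}) = R$ satisfying (a) and (b). The block decomposition ensures that non-covering steps occur exactly between blocks, so the "if and only if" in Definition \ref{def:weaklyRetralChains} holds.

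\textbf{Admissibility condition (3).} This is the main point to check. If $H$ covers a flat of $\mathscr{F}$, that flat is $H\setminus\{\min F\}$. Indeed, by the construction in the last block, the final element adjoined is $\min F$. If $\lfloor R\rfloor = r-2$, then $F$ is a singleton and the covered flat is $G^{\lfloor R\rfloor}$, which gives the same conclusion. So condition (3) asks that $u_1 < \min F$, which is exactly condition (3) of the hypotheses.

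If $H$ covers no flat, which happens when $|F|>1$ would contradict the construction of the last block (since $r-2\in R_s$ in that case), the condition is vacuous. We must check this case split carefully. Note that $r-2 \in R$ exactly when $H$ covers its predecessor, and that happens precisely when $R_s$ has at least two elements or $\lfloor R\rfloor = r-2$.

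\textbf{Expected obstacle.} The main delicacy is this bookkeeping for the top block $R_s$. In particular, when $R_s=\{r-1\}$, so that $\lfloor R\rfloor=r-1$ and $F=\emptyset$, the covered-flat discussion is vacuous and we must verify that condition (3) does not apply. We will handle this case separately.
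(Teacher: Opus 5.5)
Your proposal is correct and follows the paper's proof. As in the paper, the weakly retral chain is forced block by block exactly as in Lemma \ref{lem:rankSelectedRetralConstruction}, with no condition on the hyperplane. Then $\xi$ is forced by (c), (d) and the successor rule, and admissibility condition (3) reduces to $u_1 < \min F$ when $r-2 \in R$ and is vacuous when $F=\emptyset$. The case $F=\emptyset$ is exactly when $H$ covers nothing; your sentence invoking $|F|>1$ there is garbled.

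One small slip, which the paper shares: when $i=1$, hypothesis (4) is vacuous, so it does not give the successor $v_1$. That comes instead from $\ell \le n-r-i+1$, which leaves $n-r-\ell+1 \ge i \ge 1$ elements of $[n]\setminus H$ above $u_1$.
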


		\begin{proof}
            Similar to the proof of Lemma \ref{lem:rankSelectedRetralConstruction}, there is a unique weakly retral chain
            $ \mathscr{F} $ with $ \rks(\mathscr{F}) = R $ satisfying conditions (a) and (b). Conditions (c) and (d) uniquely determine
            $ \xi = \bigwedge_{m=1}^{i} (\ell_{u_{m}}- \ell_{v_{m}}) $, as we must have $ u_{1} < u_{2} < \cdots < u_{i} $
            and $ v_{m} $ must be the successor of 
            $ u_{m} $ in $ [n] \setminus \max(\mathscr{F}) $. Note that the successor of $ u_{i} $ exists because of the restriction
            in (4).

            It remains to see that $ \xi $ is admissible to $ \mathscr{F} $. Indeed, if $ F^{r-1} $ covers $ F^{r-2} $ in $ \mathscr{F} $,
            then $ u_{1} < F^{r-1} \setminus F^{r-2} $ due to the restriction in (3) and condition (c).
		\end{proof}

        It is straightforward to see that every pair $(\mathscr{F}, \xi) \in W_{R,i}(U_{r,n}) $ arises from Lemma \ref{lem:rankSelectedWeaklyRetral}
        for a unique set of data $ \mathscr{G}, \ell, F, I $. 
        The following counts the possible choices of $ \mathscr{G} $, $ \ell $, $F$, and $I$ above.

		\begin{prop}\label{prop:rankSelectedWeaklyRetral}
			Let $R$ be a subset of $ \left\{0, \dots, r-1 \right\} $ containing $ 0$ and $ r-1 $.
            Then 
            \[ 
                \left| W_{R}(U_{r,n}) \right| = \binom{n}{\Delta R} \, \sum_{\ell=1}^{n-r-i+1} \, \binom{n - \lfloor R \rfloor - \ell}{r-
                \lfloor R \rfloor - 1} \binom{n-r-\ell}{i-1}.
            \]
		\end{prop}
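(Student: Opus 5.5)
The plan is to count the data $(\mathscr{G}, \ell, F, I)$ of Lemma \ref{lem:rankSelectedWeaklyRetral}, using that this lemma together with the subsequent remark gives a bijection between such data and $W_{R,i}(U_{r,n})$. It then suffices to show three things. First, the number of choices of $\mathscr{G}$ is $\binom{n}{\Delta R}$. Second, for fixed $\mathscr{G}$ and $\ell$, the number of valid $F$ is $\binom{n-\lfloor R\rfloor-\ell}{r-\lfloor R\rfloor-1}$. Third, for fixed $\mathscr{G},\ell,F$, the number of valid $I$ is $\binom{n-r-\ell}{i-1}$. Summing over $\ell$ then gives the formula.

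\textbf{Counting $\mathscr{G}$.} As in the discussion preceding Proposition \ref{prop:rankSelectedRetralChains}, a chain with flats of ranks $\min R_1 = 0 < \min R_2 < \cdots < \min R_s = \lfloor R\rfloor$ amounts to choosing successive disjoint blocks of sizes $\min R_{j}-\min R_{j-1}$. In $U_{r,n}$ every subset of size less than $r$ is a flat, so the count is $\binom{n}{\Delta R}$. This number is independent of the later choices, because the remaining counts depend on $\mathscr{G}$ only through $|G^{\lfloor R\rfloor}| = \lfloor R\rfloor$.

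\textbf{Counting $F$ and $I$.} Set $S = [n]\setminus G^{\lfloor R\rfloor}$, so $|S| = n-\lfloor R\rfloor$. Let $s_\ell$ be its $\ell$th smallest element. Condition (3) asks for an $(r-\lfloor R\rfloor-1)$-subset of the $n-\lfloor R\rfloor-\ell$ elements of $S$ exceeding $s_\ell$, which gives the binomial $\binom{n-\lfloor R\rfloor-\ell}{r-\lfloor R\rfloor-1}$.

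Next let $T = S\setminus F = [n]\setminus \max(\mathscr{F})$, so $|T| = n-r+1$. Since $F$ lies strictly above $s_\ell$, the $\ell$ smallest elements of $S$ all lie in $T$. The number of elements of $T$ greater than $s_\ell$ is therefore $n-r+1-\ell$. Condition (4) asks for an $(i-1)$-subset of these elements that avoids $\max T$. This is possible exactly when $\max T > s_\ell$, i.e. $\ell \le n-r$. The count is then $\binom{n-r-\ell}{i-1}$, which vanishes automatically unless $\ell \le n-r-i+1$.

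This vanishing is consistent with the range of $\ell$ in condition (2). It also handles $i=1$: there $u_1 = s_\ell$ must have a successor in $T$, which forces $\ell\le n-r$.

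\textbf{Main obstacle.} The main point to verify is the claim that every element of $W_{R,i}(U_{r,n})$ arises from unique data, i.e. that the map is a bijection; the remark after the lemma only asserts it. To check this, start from $(\mathscr{F},\xi)$. Take $\mathscr{G}$ to be the sub-chain of $\mathscr{F}$ at ranks $\min R_j$; set $F = \max(\mathscr{F})\setminus G^{\lfloor R\rfloor}$, let $\ell$ be the position of $u_1$ in $S$, and set $I=\{u_2,\dots,u_i\}$.

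Then verify the constraints. First, $\min F > u_1$. This needs a case split. If $r-1 = \lfloor R\rfloor$, then $F$ is empty and there is nothing to check. Otherwise the top block of $R$ is consecutive up to $r-1$, so the hyperplane covers $F^{r-2}$. Admissibility condition (3) then gives $u_1 < F^{r-1}\setminus F^{r-2}$. The weakly retral condition on the lower covers in the top block makes those elements the largest ones of $F^{r-2}\setminus G^{\lfloor R\rfloor}$, arranged decreasingly, so all of $F$ exceeds $u_1$. Second, the positions of $u_1$ in $S$ and in $T$ agree, because everything below $u_1$ in $S$ lies in $T$. Third, the existence of the successors $v_m$ gives $\max I<\max T$. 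Uniqueness then follows from conditions (a)--(d).

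Finally sum over $\ell$, and note that the proposition's $W_R$ should be read as $W_{R,i}$.
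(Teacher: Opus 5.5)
Your proposal is correct and follows the paper's own route: the paper obtains the formula by counting the data $(\mathscr{G},\ell,F,I)$ of Lemma \ref{lem:rankSelectedWeaklyRetral}, and only asserts, without detail, that every pair in $W_{R,i}(U_{r,n})$ arises from unique such data. Your explicit inverse map fills in exactly that step: you check $\min F > u_1$ via admissibility and the decreasing top block, and that $u_1$ has the same position in $S$ and $T$, which is what gives $\ell \le n-r-i+1$ (worth stating explicitly). You are also right that $W_R$ should read $W_{R,i}$.
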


		Summing over appropriate subsets $ R $  produces the refined
		Hodge--Poincar\'{e} polynomials.

        \begin{thm}\label{thm:hosterHP}
			For $ 1 \leq i \leq n-r $, 
			\[ 
						\underline{\H}^{i}_{U_{r,n}}(x) = \sum_{\substack{ R \subseteq 
						\left\{ 0,\dots, r-1 \right\} \\ 0, r-1 \in R}}
                        \sum_{\ell=1}^{n-r-i+1} \, \binom{n}{\Delta R} \binom{n - \lfloor R \rfloor - \ell}{r-\lfloor R \rfloor -1} 
                        \binom{n-r-\ell}{i-1} \cdot x^{\left| R \right| -1}.
			\]
		\end{thm}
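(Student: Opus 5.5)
The plan is to read off $\underline{\H}^{i}_{U_{r,n}}(x)$ directly from the basis of Theorem \ref{thm:basis} and then apply Proposition \ref{prop:rankSelectedWeaklyRetral}. For $i \geq 1$ the basis elements of $H_{i}(K(\mathbb{Q}[\Sigma_{U_{r,n}}]))$ are exactly the classes $x_{\mathscr{F}} \otimes \xi$ with $\mathscr{F}$ weakly retral and $\xi \in \bigwedge^{i} N_{\mathbb{Q}}^{\vee}$ admissible to $\mathscr{F}$. The elements $x_{\mathscr{F}}$ with $\mathscr{F}$ retral all lie in homological degree $0$ and so do not contribute.

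The first step is to determine the bigrading. The element $x_{\mathscr{F}} \otimes \xi$ lies in $K_{i}(\mathbb{Q}[\Sigma_{U_{r,n}}])_{j}$ with $j - i = \deg x_{\mathscr{F}}$. Since $\mathscr{F}$ contains the empty flat by convention and $x_{\emptyset}=1$, we have $\deg x_{\mathscr{F}} = |\rks(\mathscr{F})| - 1$. So each such element contributes $x^{j-i} = x^{|R|-1}$ to $\underline{\H}^{i}_{U_{r,n}}(x)$, where $R = \rks(\mathscr{F})$.

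The second step is to record which $R$ occur. A weakly retral chain contains a hyperplane, of rank $r-1$, and the empty flat, of rank $0$. Hence $R$ ranges over subsets of $\{0,\dots,r-1\}$ containing both $0$ and $r-1$. Grouping basis elements by $R$ gives
\[
\underline{\H}^{i}_{U_{r,n}}(x) = \sum_{\substack{R \subseteq \{0,\dots,r-1\} \\ 0, r-1 \in R}} |W_{R,i}(U_{r,n})| \cdot x^{|R|-1}.
\]

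The last step is to substitute the count from Proposition \ref{prop:rankSelectedWeaklyRetral}, which rests on the bijection between $W_{R,i}(U_{r,n})$ and the data $(\mathscr{G},\ell,F,I)$ of Lemma \ref{lem:rankSelectedWeaklyRetral}. The factor $\binom{n}{\Delta R}$ counts the choices of $\mathscr{G}$. For fixed $\ell$, the flat $F$ is chosen from the $n - \lfloor R \rfloor - \ell$ elements above the $\ell$th smallest element of $[n]\setminus G^{\lfloor R\rfloor}$, giving $\binom{n-\lfloor R\rfloor-\ell}{r-\lfloor R\rfloor-1}$. The set $I$ is then chosen from the $n-r-\ell$ elements of $[n]\setminus \max(\mathscr{F})$ lying above $u_{1}$, excluding the largest of them, giving $\binom{n-r-\ell}{i-1}$.

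The main obstacle is making sure this bijection is genuinely bijective, and in particular that every admissible pair arises from Lemma \ref{lem:rankSelectedWeaklyRetral}. For that direction I would take $u_{1}=\min$ of the wedge, $\ell$ its position in $[n]\setminus G^{\lfloor R\rfloor}$, $F=\max(\mathscr{F})\setminus G^{\lfloor R\rfloor}$, and $I=\{u_{2},\dots,u_{i}\}$, and then check the constraints. The condition $\min F > u_{1}$ comes from two sources. When $\lfloor R\rfloor = r-2$, the hyperplane covers $F^{r-2}$, and this is exactly admissibility condition (3). Otherwise the hyperplane sits at the top of a consecutive rank block, and the retral-type condition forces its added elements to be the largest ones, which I would need to reconcile with the ordering. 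Finally, $u_{1} \notin \max(\mathscr{F})$ together with the existence of successors yields the range $1 \le \ell \le n-r-i+1$; terms outside the valid range vanish, so the sum is correct as written.
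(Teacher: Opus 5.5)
Your route is the paper's, which proves the theorem in one line: for $i\ge 1$ only the classes $x_{\mathscr{F}}\otimes\xi$ contribute, each contributes $x^{|R|-1}$ with $R=\rks(\mathscr{F})\ni 0,r-1$, and grouping by $R$ and substituting Proposition \ref{prop:rankSelectedWeaklyRetral} gives the formula. Once that proposition is granted, your argument is complete.

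The part to fix is your last paragraph, your sketch of why every admissible pair arises from Lemma \ref{lem:rankSelectedWeaklyRetral}. Both the case split and the mechanism there are off. Put $k=\lfloor R\rfloor$ and $a_m=F^m\setminus F^{m-1}$.

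\begin{itemize}
\item If $k=r-1$, then $F=\emptyset$ and there is nothing to check.
\item If $k\le r-2$ (not only $k=r-2$), the top block $\{k,\dots,r-1\}$ contains $r-2$. So the hyperplane always covers $F^{r-2}$, and admissibility condition (3) gives $u_1<a_{r-1}$.
\item In that second case, the weakly retral condition on the non-hyperplane flats $F^{k+1},\dots,F^{r-2}$ says $a_m$ is the larger of $a_m,a_{m+1}$. This forces $a_{k+1}>a_{k+2}>\cdots>a_{r-1}$, so $a_{r-1}=\min F$ and hence $\min F>u_1$.
\end{itemize}

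Your claim that the retral-type condition forces the hyperplane's added elements to be the largest ones is false here. The hyperplane is exempt from the maximal-cover condition, so only the internal order of $F$ is forced, not which elements it contains. That freedom is exactly what the factor $\binom{n-\lfloor R\rfloor-\ell}{r-\lfloor R\rfloor-1}$ counts. If your claim held, $F$ would be determined by $G^{\lfloor R\rfloor}$ and there would be no such factor.
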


		\subsection{Refined Hodge--Poincar\'{e} polynomials and derangement polynomials}
		In this subsection, we give another way of constructing retral chains, this time through concatenation. This leads to combinatorial proofs of
		results from \cite{HRS} and \cite{FMSV} which express the Chow polynomial $\underline{\H}_{U_{r,n}}(x) $ in
		terms of the \emph{derangement polynomials}. We then obtain similar formulas for the other 
		refined Hodge--Poincar\'{e} polynomials.

		\begin{definition}
			Let $ \mathfrak{D}_{n} $ denote the set of \emph{derangements} on $[n]$---these
			are the permutations in $ \mathfrak{S}_{n} $ with no fixed points. Recall
            that an \emph{excedence} of $ \sigma \in \mathfrak{S}_{n} $ is an index $ i \in [n] $ such that 
		    $ i < \sigma(i) $. We write $ \exc(\sigma) $ for the number of excedences of $ \sigma $.

			For $ n \geq 1 $, the $n$th \emph{derangement polynomial} is
			\[ 
					d_{n}(x) = \sum_{\sigma \in \mathfrak{D}_{n}} x^{\exc(\sigma)},
			\]
			with $ d_{0}(x) = 1 $ by convention.
		\end{definition}

        For a chain of flats $ \mathscr{F} $, we define $ \mathscr{F}_{\geq j} = \left\{ F^{i} : i \in \rks(\mathscr{F}) \textrm{ and }
        i \geq j \right\} $ and $ \mathscr{F}_{<j} $ similarly.

		\begin{lem}\label{lem:retralConstruction}
			Let $ j > 1 $ and suppose $ \mathscr{F} $ is a retral chain in $ \mathscr{L}(U_{r,n}) $ with
			$ j \in \rks(\mathscr{F}) $ but $ j-1 \notin \rks(\mathscr{F}) $. Then
            $ \emptyset < \mathscr{F}_{\geq j} $ is a retral chain in $ \mathscr{L}(U_{r,n}) $
            with $ \min \rks(\mathscr{F}_{\geq j}) = j $, and
			$ \mathscr{F}_{<j} $ is a
            retral chain in $ \mathscr{L}(U_{r,n}^{F^{j}}) $ with $ \max \rks(\mathscr{F}_{<j}) < j-1 $.

			The converse is also true: If $ \emptyset  < \mathscr{F}_{\geq j} $ is a retral chain
			in $ \mathscr{L}(U_{r,n}) $ with $ \min \rks(\mathscr{F}_{\geq j}) = j $
			and $ \mathscr{F}_{<j} $ is a retral chain in $ \mathscr{L}(U_{r,n}^{F^{j}}) $ 
			with $ \max \rks(\mathscr{F}_{<j}) < j-1 $, then the concatenation 
			$ \mathscr{F}_{< j} < \mathscr{F}_{\geq j} $ is a retral chain in
			$ \mathscr{L}(U_{r,n}) $.
		\end{lem}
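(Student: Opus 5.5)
The plan is to unwind Definition \ref{def:retralChains} and observe that the retral condition is local: it is a family of conditions indexed by consecutive pairs $F_{i-1} < F_{i}$, each of which involves only $F_{i-1}$, $F_{i}$, and the next flat $F_{i+1}$. Splitting $\mathscr{F}$ at rank $j$ therefore splits these conditions into those belonging to $\mathscr{F}_{<j}$ and those belonging to $\emptyset < \mathscr{F}_{\geq j}$. The hypothesis $j-1 \notin \rks(\mathscr{F})$ ensures that no condition straddles the cut in a nontrivial way.

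\textbf{Forward direction.} Write $\mathscr{F} = \{\emptyset = F_{0} < \cdots < F_{s}\}$ with $F_{t} = F^{j}$, so that $F_{t-1}$ has rank less than $j-1$.

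\emph{Conditions involving $F^{j}$ as the smaller flat.} In $\emptyset < \mathscr{F}_{\geq j}$, the flat $F^{j}$ does not cover $\emptyset$, because $j > 1$. It is also not the maximal cover of $\emptyset$, since maximal covers are rank-one flats. So the ``iff'' holds vacuously at $F^{j}$. The conditions at indices $i > t$ involve only $F_{i-1}, F_{i}, F_{i+1}$, which all lie in $\mathscr{F}_{\geq j}$ (with $F_{s+1} = [n]$). These conditions are identical in both chains, so $\emptyset < \mathscr{F}_{\geq j}$ is retral with minimal rank $j$.

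\emph{Conditions in $\mathscr{F}_{<j}$.} Here we work in $\mathscr{L}(U_{r,n}^{F^{j}})$, whose top element is $F^{j}$. The conditions at indices $i < t$ are literally the same as in $\mathscr{F}$, with $F_{t} = F^{j}$ playing the role of the ``$F_{s+1}$''. We must check that being retral in the restriction $U_{r,n}^{F^{j}}$ (defined via the order-preserving isomorphism from $U_{|F^{j}|,|F^{j}|}$) agrees with the covering and maximal-cover notions for flats strictly inside $F^{j}$. This holds because flats of $U_{r,n}$ of rank less than $j$ contained in $F^{j}$ are exactly the subsets of $F^{j}$ of size less than $j$. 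These coincide with the proper flats of the Boolean matroid on $F^{j}$ below the hyperplanes, and the covering relations and maximal covers (maximal elements of complements) are preserved by the lex-order isomorphism.

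The remaining issue is that in $U_{|F^{j}|,|F^{j}|}$ the hyperplanes have rank $|F^{j}| - 1 = j-1$. So $\mathscr{F}_{<j}$ has top rank less than $j-1$ and contains no hyperplane of the restriction, which is exactly what the hypothesis gives.

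\textbf{Converse.} The same bookkeeping runs in reverse. Each covering condition of the concatenation $\mathscr{F}_{<j} < \mathscr{F}_{\geq j}$ is one of the conditions already verified in one of the two pieces. The pair $(\max \mathscr{F}_{<j}, F^{j})$ is a non-cover, since the ranks differ by at least $2$, and a non-cover is never a maximal cover. So the condition at index $t$ holds vacuously. Here the condition at $t$ must be read in $\mathscr{L}(U_{r,n}^{F_{t+1}})$.

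\textbf{Main obstacle.} The only delicate point is the identification of retrality in $\mathscr{L}(U_{r,n}^{F^{j}})$ with the native conditions via the lex-preserving isomorphism. In particular, one must check that ``maximal cover in $\mathscr{L}(U^{F_{i+1}})$'' has the same meaning whether $F_{i+1}$ is viewed inside $U_{r,n}$ or inside $U_{r,n}^{F^{j}}$. I would settle this by noting that restriction is transitive, $(U_{r,n}^{F^{j}})^{F_{i+1}} = U_{r,n}^{F_{i+1}}$ for $F_{i+1} \subseteq F^{j}$, and that both are Boolean on $F_{i+1}$ in the relevant ranks. The maximal-cover criterion $F' \setminus F = \max(F_{i+1} \setminus F)$ is then visibly intrinsic.
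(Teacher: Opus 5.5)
Your proposal is correct and follows the paper's proof. The retral condition only involves consecutive triples $F_{i-1} < F_{i} < F_{i+1}$, and the condition at $F^{j}$ is vacuous in $\emptyset < \mathscr{F}_{\geq j}$ (because $j > 1$) and in the concatenation (because $j-1 \notin \rks(\mathscr{F})$); you additionally spell out the identification of retrality in $\mathscr{L}(U_{r,n}^{F^{j}})$ with the conditions in $\mathscr{L}(U_{r,n})$, which the paper treats as immediate from the definition.
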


		\begin{proof}
			The first statement is immediate from the definition of retral chains.
			The second statement follows because $ F^{j} $ covers no flat in
			$ \mathscr{F}_{<j} < \mathscr{F}_{\geq j} $, so the maximal cover condition on this flat is vacuous.
		\end{proof}

		We use this construction to count retral chains in $ \mathscr{L}(U_{r,n}) $, yielding a
		formula for $ \underline{\H}_{U_{r,n}}(x) $. For the following, we use the convention that $ \underline{\H}_{U_{0,1}}(x) = 0 $
		and $ \underline{\H}_{U_{-1,0}}(x) = \frac{1}{x} $. 

		\begin{prop}\label{prop:recursiveCorank1}
			For $ r \geq 1 $, 
			\[ 
				\underline{\H}_{U_{r,n}}(x) = \sum_{j=0}^{r-1}\, \binom{n}{j} \, \underline{\H}_{U_{j-1,j}}(x)
						\cdot (x + x^{2} + \cdots + x^{r-j}).
			\]
		\end{prop}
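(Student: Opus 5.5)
We prove the formula by sorting the retral chains $\mathscr{F}$ in $\mathscr{L}(U_{r,n})$ according to where the bottom consecutive block of ranks ends. Corollary \ref{cor:basisChow} says that $\underline{\H}_{U_{r,n}}(x)=\sum_{\mathscr{F}} x^{|\rks(\mathscr{F})|-1}$, the sum running over retral chains. Write $\rks(\mathscr{F})=R_1\sqcup\cdots\sqcup R_s$ as in the previous subsection. Set $j=\min R_s$, the rank at which the top consecutive block starts. Then $\mathscr{F}_{\geq j}$ is a run of consecutive ranks $j,j+1,\dots,j+k-1$ with $1\le k\le r-j$, and $\mathscr{F}_{<j}$ has top rank $<j-1$, or is just $\{\emptyset\}$ when $j=0$.

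\textbf{Case $j=0$.} Here $\mathscr{F}$ has ranks $\{0,1,\dots,k-1\}$. The proof of Lemma \ref{lem:rankSelectedRetralConstruction} shows that this chain is uniquely determined, namely $F^{m}$ consists of the $m$ largest elements of $[n]$. This gives one chain for each $k$, contributing $1+x+\cdots+x^{r-1}$. By the convention $\underline{\H}_{U_{-1,0}}(x)=1/x$, this equals the $j=0$ term $\binom{n}{0}\frac1x(x+\cdots+x^{r})$.

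\textbf{Case $j\geq1$.} We apply Lemma \ref{lem:retralConstruction} when $j>1$; when $j=1$ it holds trivially, with $\mathscr{F}_{<1}=\{\emptyset\}$. The lemma gives a bijection between such retral chains and triples consisting of:
\begin{itemize}
\item a flat $F^{j}$ of rank $j$, with $\binom{n}{j}$ choices;
\item a retral chain $\emptyset<\mathscr{F}_{\geq j}$ whose ranks are $\{0\}\cup\{j,\dots,j+k-1\}$;
\item a retral chain $\mathscr{F}_{<j}$ in $\mathscr{L}(U_{r,n}^{F^j})\cong\mathscr{L}(U_{j,j})$ whose top rank is at most $j-2$.
\end{itemize}
By Lemma \ref{lem:rankSelectedRetralConstruction} (or Proposition \ref{prop:rankSelectedRetralChains}), once $F^{j}$ and $k$ are fixed the upper part is unique. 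Its flats of ranks $j+1,\dots,j+k-1$ are forced to be $F^{j}$ together with the largest elements of $[n]\setminus F^{j}$. It contributes $x^{k}$ to $x^{|\rks|-1}$, counting the $k$ flats of ranks $j,\dots,j+k-1$.

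\textbf{Key identification.} We need to show that the generating function $\sum x^{|\rks(\mathscr{G})|-1}$ over retral chains $\mathscr{G}$ of $\mathscr{L}(U_{j,j})$ with top rank $\le j-2$ is exactly $\underline{\H}_{U_{j-1,j}}(x)$. The retral chains of $\mathscr{L}(U_{j-1,j})$ are precisely the chains of flats of rank $\le j-2$. Their retral condition refers to covers in restrictions $U^{F_{i+1}}$, with $F_{s+1}=[j]$, and the covering and maximal-cover conditions there agree with those in $U_{j,j}$. The one subtlety is the top flat $F_s$. In $U_{j-1,j}$ with $F_{s+1}=[j]$, a flat of rank $j-2$ is a hyperplane, and its "covering $F_{s+1}$" status must match the corresponding convention in $U_{j,j}$. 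Both definitions test only covering relations among $F_1,\dots,F_s$ against the next flat. We also need to check that the cover relation $F_{s-1}\lessdot F_s$ is evaluated in $U^{F_{s+1}}=U^{[j]}$ in both lattices, which is true since for proper flats the two lattices coincide below rank $j-1$. Finally, for $j=1$ the convention $\underline{\H}_{U_{0,1}}=0$ must be reconciled: for $j=1$ the lemma's condition "$j-1\notin\rks$" fails because $0\in\rks$ always. So $j=1$ contributes nothing, and the $j=1$ block is absorbed into the $j=0$ case, consistent with the convention.

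Summing over $j$ and $k$ then gives
\[
\underline{\H}_{U_{r,n}}(x)=\sum_{j=0}^{r-1}\binom{n}{j}\,\underline{\H}_{U_{j-1,j}}(x)\,(x+\cdots+x^{r-j}),
\]
since the exponents add: $(|\rks(\mathscr{F}_{<j})|-1)+k$.

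We expect the main obstacle to be the careful matching of retral chains of $U_{j,j}$ with top rank $\le j-2$ against the retral chains of $U_{j-1,j}$, including the edge conventions for $j=0,1$. We would verify it first on small cases such as $U_{3,4}$.
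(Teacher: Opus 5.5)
Your proposal is correct and follows the paper's proof essentially verbatim. The paper also sorts retral chains by the maximal $j$ with $j\in\rks(\mathscr{F})$ and $j-1\notin\rks(\mathscr{F})$, which is your $j=\min R_s$; it handles $j=0,1$ through the stated conventions; and for $j>1$ it uses Lemmas \ref{lem:retralConstruction} and \ref{lem:rankSelectedRetralConstruction}, simply asserting the identification with retral chains of $\mathscr{L}(U_{j-1,j})$ that you check in more detail. One correction: drop your early aside that the $j=1$ case ``holds trivially with $\mathscr{F}_{<1}=\{\emptyset\}$'', since, as you note yourself afterwards, $\min R_s=1$ never occurs because $0$ and $1$ would lie in the same block.
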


		\begin{proof}
			For $ j \geq 0 $, we claim that the generating function, with respect to length, 
            for retral chains $ \mathscr{F} $ in $ \mathscr{L}(U_{r,n}) $  
			such that $j$ is the maximal index with $ j \in \rks(\mathscr{F}) $ and $ j-1 \notin \rks(\mathscr{F}) $ is
			\[ 
				\binom{n}{j} \, \underline{\H}_{U_{j-1,j}}(x) \cdot (x + x^{2} + \cdots + x^{r-j}).
			\]
			We begin with the edge cases of $ j=0, 1 $. For $j=0 $, Lemma \ref{lem:rankSelectedRetralConstruction}
			implies that such retral chains are determined by their length, which ranges from $0$ to $ r-1 $. 
			The generating function is then $ 1 + \cdots + x^{r-1}$, which agrees with the claim by our convention 
            for $ \underline{\H}_{U_{-1,0}}(x) $.
			For $j=1 $, our convention that all retral chains contain $ \emptyset $ implies the generating function is $0$
            which also agrees with the claim by our convention for $ \underline{\H}_{U_{0,1}}(x) $.

			For $ j > 1 $, Lemma \ref{lem:retralConstruction} implies that such $ \mathscr{F} $ is determined uniquely by
			$ \mathscr{F}_{\geq j} $ and $ \mathscr{F}_{<j} $. Note that $ \rks(\mathscr{F}_{\geq j}) $
			is a consecutive sequence of integers, so  Lemma \ref{lem:rankSelectedRetralConstruction} implies that
			it is determined by $ F^{j} $ and its length from $ 1 $ to $ r-j $. On the other hand,
			$ \mathscr{F}_{<j} $ is a retral chain of $ \mathscr{L}(U_{r,n}^{F^{j}}) $ 
			with $ \max \rks(\mathscr{F}_{<j}) < j-1 $. This is equivalent to a retral chain in
			$ \mathscr{L}(U_{j-1,j}) $. Therefore, the generating function is as claimed, and the result follows by summing over $j$.
		\end{proof}

		We now relate the Chow polynomials of uniform matroids to the derangement polynomials.

		\begin{prop}[{\cite[Theorem 5.1]{HRS} and \cite[Theorem 1.9]{FMSV}}] \label{prop:HRS}
			The following formulas for the Chow polynomials hold:
			\begin{align*}
						x \cdot \underline{\H}_{U_{n-1,n}}(x) &= d_{n}(x) \\
						\underline{\H}_{U_{r,n}}(x) &= \sum_{j=0}^{r-1} \, \binom{n}{j} \, d_{j}(x) \cdot
			 (1 + x + \cdots + x^{r-1-j}).
			\end{align*}
		\end{prop}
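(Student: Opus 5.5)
The plan is to prove the second identity from the first, and the first by induction on $n$ using Proposition~\ref{prop:recursiveCorank1} together with a recurrence for derangement polynomials.

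\textbf{Reducing the second identity to the first.} Assume $x \cdot \underline{\H}_{U_{j-1,j}}(x) = d_{j}(x)$ for all $j \geq 2$. Substitute this into Proposition~\ref{prop:recursiveCorank1}, in the form $\underline{\H}_{U_{j-1,j}}(x)\,(x+\cdots+x^{r-j}) = d_j(x)\,(1+\cdots+x^{r-j-1})$. The edge cases are handled by the conventions:
\begin{itemize}
\item For $j=0$, $\underline{\H}_{U_{-1,0}}(x)=1/x$ and $d_0(x)=1$, so $\frac{1}{x}(x+\cdots+x^{r}) = 1+\cdots+x^{r-1}$.
\item For $j=1$, $\underline{\H}_{U_{0,1}}(x)=0=d_1(x)$, since $\mathfrak{D}_1=\emptyset$.
\end{itemize}
The convention $\underline{\H}_{U_{-1,0}}=1/x$ is exactly the one that makes the $j=0$ case work, so the second displayed formula follows term by term.

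\textbf{Setting up the induction for the first identity.} We induct on $n$; the base case $n=1$ reads $0 = d_1(x)$. Apply Proposition~\ref{prop:recursiveCorank1} with $r=n-1$, multiply by $x$, and use the inductive hypothesis for $2\le j \le n-2$ together with the $j=0,1$ conventions. This gives
\[
x\,\underline{\H}_{U_{n-1,n}}(x) = \sum_{j=0}^{n-2}\binom{n}{j}\, d_j(x)\,(x+x^2+\cdots+x^{n-1-j}).
\]
So it suffices to prove the derangement recurrence
\[
d_n(x) = \sum_{j=0}^{n-2}\binom{n}{j}\, d_j(x)\,(x+\cdots+x^{n-j-1}) \qquad (n\ge 1).
\]

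\textbf{Proving the derangement recurrence.} This is the step I expect to be the main obstacle, since the rest is bookkeeping. A direct combinatorial bijection is possible, for instance by decomposing a derangement according to the cycle containing $n$, but it is fiddly. The most reliable route is exponential generating functions, using the classical formula
\[
D(t) := \sum_{n\ge0} d_n(x)\frac{t^n}{n!} = \frac{1-x}{e^{xt}-xe^{t}}.
\]
This formula follows from $\sum_n A_n(x)t^n/n! = (1-x)/(e^{t(x-1)}-x)$ and the relation $A_n=\sum_j\binom{n}{j}d_j$, obtained by splitting off fixed points.

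Set $G(t)=\sum_{m\ge2}(x+\cdots+x^{m-1})\frac{t^m}{m!}$. Since $x+\cdots+x^{m-1} = x(1-x^{m-1})/(1-x)$, summing gives
\[
G(t)=\frac{x(e^t-1-t)-(e^{xt}-1-xt)}{1-x} = \frac{xe^t-e^{xt}}{1-x}+1 = 1-\frac{1}{D(t)}.
\]
Hence $D(t)\,G(t) = D(t)-1$. Comparing coefficients of $t^n/n!$ for $n\ge1$ yields exactly the recurrence, because $G$ has no terms of degree below $2$ and so only $j\le n-2$ contributes. This completes the induction, and hence both formulas.
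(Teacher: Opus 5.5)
Your proof is correct and follows the paper's argument: apply Proposition~\ref{prop:recursiveCorank1} with $r=n-1$ to get the recurrence $f_n=\sum_{j=0}^{n-2}\binom{n}{j}f_j\,(x+\cdots+x^{n-1-j})$, match it with the derangement recurrence using the conventions for $j=0,1$ as base cases, and then substitute $d_j(x)=x\cdot\underline{\H}_{U_{j-1,j}}(x)$ back into Proposition~\ref{prop:recursiveCorank1} for the second formula. The only difference is that the paper cites the derangement recurrence from \cite[Corollary 4.2]{JKMS}, while you derive it from $D(t)=(1-x)/(e^{xt}-xe^{t})$ and the identity $D\,G=D-1$; that computation is correct.
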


		\begin{proof}
			Proposition \ref{prop:recursiveCorank1} applied to $ U_{n-1,n} $ shows that
            $ \{ x \cdot \underline{\H}_{U_{n-1,n}}(x) \}_{n=0}^{\infty} $  satisfies the recurrence
            \[ 
                f_{n}(x) = \sum_{j=0}^{n-2} \, \binom{n}{j} \, f_{j}(x) \cdot (x + \cdots + x^{n-1-j})
            \]
            which the family $ \{ d_{n}(x) \}_{n=0}^{\infty} $ also satisfies by \cite[Corollary 4.2]{JKMS}. 
            As $ x \cdot \underline{\H}_{U_{n-1,n}}(x) =
			d_{n}(x) $ for $ n=0,1 $, we have $ x \cdot \underline{\H}_{U_{n-1,n}}(x) = d_{n}(x) $ 
			for all $n$. 

			Proposition \ref{prop:recursiveCorank1} yields the formula for 
			$ \underline{\H}_{U_{r,n}}(x) $ after the substitution 
			$ d_{j}(x) = x \cdot \underline{\H}_{U_{j-1,j}}(x) $.
		\end{proof}

		\subsubsection{Weakly retral chains and admissible wedges}

		A similar strategy of concatenation produces a method for constructing pairs $ (\mathscr{F}, \xi) $
		of a weakly retral chain $ \mathscr{F} $ and admissible wedge $ \xi $.

		\begin{lem}\label{lem:constructionWeaklyRetralConcatenation}
			Let $ j > 1 $ and suppose that $ \mathscr{F} $ is a weakly retral chain in
			$ \mathscr{L}(U_{r,n}) $ with $ j \in \rks(\mathscr{F}) $ but $ j-1 \notin
			\rks(\mathscr{F}) $. Suppose also that $ \xi $ is admissible to $ \mathscr{F} $.
			Then $ \emptyset < \mathscr{F}_{\geq j} $ is a weakly retral chain in $ \mathscr{L}(U_{r,n}) $
            with $\min \rks(\mathscr{F}_{\geq j}) = j $,
			$ \mathscr{F}_{<j} $ is a retral chain in $ \mathscr{L}(U_{r,n}^{F^{j}}) $ with
			$ \max \rks(\mathscr{F}_{<j}) < j-1 $, and $ \xi $ is admissible to $ \emptyset < \mathscr{F}_{\geq j} $.

			The converse also holds.
		\end{lem}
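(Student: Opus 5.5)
The argument runs parallel to Lemma \ref{lem:retralConstruction}: we check the local conditions in Definitions \ref{def:weaklyRetralChains} and \ref{def:admissible} one covering relation at a time. Write $\mathscr{F} = \{\emptyset = F_0 < \cdots < F_s\}$, with $F_s = \max(\mathscr{F})$ a hyperplane. Since $j-1 \notin \rks(\mathscr{F})$, the flat $F^j$ covers no member of $\mathscr{F}$. By the same token, in $\emptyset < \mathscr{F}_{\geq j}$ the flat $F^j$ covers nothing, because $j>1$. So at $F^j$ the retral condition reduces to ``$F^j$ does not cover its predecessor,'' and this holds on both sides. Every flat strictly above $F^j$ has the same predecessor and successor in $\mathscr{F}$ and in $\emptyset<\mathscr{F}_{\geq j}$. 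Hence the weakly retral condition at those flats, which refers only to the predecessor, to covering, and to the maximal cover in $\mathscr{L}(U_{r,n}^{F_{i+1}})$, is literally the same statement in both chains. The hyperplane is exempt in both, so $\emptyset<\mathscr{F}_{\geq j}$ is weakly retral with $\min\rks = j$.

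For the lower part, the flats of $\mathscr{F}_{<j}$ have the same predecessors and successors as in $\mathscr{F}$, except that the top flat of $\mathscr{F}_{<j}$ has successor $F^j$. That successor is exactly the ambient flat $[n]$ of $\mathscr{L}(U_{r,n}^{F^j})$. The convention $F_{s+1}$ equal to the whole ground set therefore matches. The retral condition in $\mathscr{L}(U_{r,n}^{F^j})$ is then the same as the weakly retral condition in $\mathscr{L}(U_{r,n})$ for these indices, and since $F^j$ is not the hyperplane (as $j\le r-1$ and $F^j\ne F_s$ is needed only if $j<r-1$; if $j=r-1$ the lower part's conditions are still the non-hyperplane ones), no exemption is lost.

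One point needs care here. Retrality in $\mathscr{L}(U_{r,n}^{F^j})$ is defined through the lex-order-preserving isomorphism $U_{|F^j|,|F^j|}\to U^{F^j}_{r,n}$. We must check that covering and ``maximal cover in the restriction to the next flat'' are transported correctly. They are, because the isomorphism preserves the order on elements, and maximal covers are determined by the maximal element of a complement. The condition $\max\rks(\mathscr{F}_{<j})<j-1$ is just $j-1\notin\rks(\mathscr{F})$.

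Admissibility of $\xi$ depends only on $\max(\mathscr{F})$ and, through condition (3), on its predecessor when that predecessor is covered. If $j<r-1$ or $j=r-1$, the maximal flat and its predecessor are the same in $\mathscr{F}$ and in $\emptyset<\mathscr{F}_{\geq j}$. The one exception is $j=r-1$: there the predecessor differs, but $F^{r-1}$ covers no flat in either chain, so condition (3) is vacuous in both. This gives the forward direction.

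For the converse, we reverse each step. Given the two pieces, the concatenation has exactly the same local data at every index: predecessor, successor, covering status, and ambient restriction. The weakly retral condition and admissibility therefore transfer back, and $F^j$ again covers nothing because $\max\rks(\mathscr{F}_{<j})<j-1$. The main (minor) obstacle is the $j=r-1$ boundary case together with the identification of retral chains in the restriction with those defined via $U_{|F^j|,|F^j|}$. Both are handled by the observations above.
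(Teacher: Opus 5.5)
Your proof is correct and takes essentially the paper's approach. The paper gives no separate proof of this lemma; it relies on the same local check as Lemma \ref{lem:retralConstruction}, where every condition is unchanged except at $F^{j}$, and there it is vacuous because $F^{j}$ covers nothing. Your treatment of admissibility adds the right observation: condition (3) only involves the hyperplane and its predecessor, and both sides are vacuous when $j=r-1$.

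Two wording slips to fix. ``If $j<r-1$ or $j=r-1$'' should read ``If $j<r-1$.'' Also, ``$F^{j}$ is not the hyperplane'' is false when $j=r-1$; what you actually need, as your parenthetical says, is that every flat of $\mathscr{F}_{<j}$ has rank at most $j-2$ and so is never exempt from the weakly retral condition.
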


		From this construction we derive a formula for the refined Hodge--Poincar\'{e} polynomials 
		in terms of the derangement polynomials.

        \begin{thm}\label{thm:derangementHP}
			For $ 1 \leq i \leq n-r $,
			\[ 
				\underline{\H}^{i}_{U_{r,n}}(x) = \sum_{j=0}^{r-1}
                \sum_{\ell=1}^{n-r-i+1} \, \binom{n}{j} \binom{n - j- \ell}{r-j - 1} \binom{n-r-\ell}{i-1} \, d_{j}(x) \cdot x^{r-j-1}.
			\]
		\end{thm}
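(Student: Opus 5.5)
We use the concatenation decomposition of Lemma \ref{lem:constructionWeaklyRetralConcatenation}, in the same way Proposition \ref{prop:recursiveCorank1} used Lemma \ref{lem:retralConstruction}. By Theorem \ref{thm:basis}, $\underline{\H}^{i}_{U_{r,n}}(x)$ is the generating function of pairs $(\mathscr{F},\xi)$ with $\mathscr{F}$ weakly retral and $\xi\in\bigwedge^{i}$ admissible, each pair weighted by $x^{|\mathscr{F}|-1}$ (counting $\emptyset$ in $\mathscr{F}$). Since $r-1\in\rks(\mathscr{F})$, the set $\rks(\mathscr{F})$ has a last maximal consecutive block $R_s$. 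Let $j=\lfloor R\rfloor=\min R_s$, so that $R_s=\{j,\dots,r-1\}$. We split the sum according to $j\in\{0,\dots,r-1\}$.

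\textbf{Case $j=0$.} Here $R=\{0,\dots,r-1\}$, and these pairs are counted directly by Proposition \ref{prop:rankSelectedWeaklyRetral}. We have $\binom{n}{\Delta R}=1$, $\lfloor R\rfloor=0$, and the exponent is $|R|-1=r-1$. This matches the $j=0$ term, since $d_0=1$ and $\binom{n}{0}=1$.

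\textbf{Case $j=1$.} Then $1\in R$ and $0\notin R$, which is impossible because every chain contains $\emptyset$. So there are no such pairs, consistent with $d_1(x)=0$.

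\textbf{Case $j\ge 2$.} Lemma \ref{lem:constructionWeaklyRetralConcatenation} gives a bijection between such pairs and triples. The first entry is a weakly retral chain $\emptyset<\mathscr{F}_{\geq j}$ with rank set $\{0\}\cup\{j,\dots,r-1\}$. The second is an admissible $\xi$ to that chain. The third is a retral chain $\mathscr{F}_{<j}$ in $\mathscr{L}(U^{F^{j}}_{r,n})$ with $\max\rks(\mathscr{F}_{<j})<j-1$.

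We count the first two components together using Proposition \ref{prop:rankSelectedWeaklyRetral} with $R=\{0,j,\dots,r-1\}$. Here $\binom{n}{\Delta R}=\binom{n}{j}$ and $\lfloor R\rfloor=j$, so the count is
\[
\binom{n}{j}\sum_{\ell=1}^{n-r-i+1}\binom{n-j-\ell}{r-j-1}\binom{n-r-\ell}{i-1}.
\]
The key point is that admissibility of $\xi$ depends only on $\max(\mathscr{F})$ and the flat of rank $r-2$ in $\mathscr{F}$. Both of these lie in $\mathscr{F}_{\geq j}$ when $j\le r-2$, and when $j=r-1$ the chain has no covering below the hyperplane. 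So the count does not depend on the third component.

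The third component ranges over retral chains of $U_{j,j}$ avoiding rank $j-1$, which are equivalent to retral chains of $U_{j-1,j}$. As in the proof of Proposition \ref{prop:recursiveCorank1}, their generating function is $\underline{\H}_{U_{j-1,j}}(x)$.

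For the weight, the chain $\mathscr{F}_{\geq j}$ contributes $r-j$ nonempty flats, giving $x^{r-j}$. Combining, we get $x^{r-j}\underline{\H}_{U_{j-1,j}}(x)=x^{r-j-1}d_j(x)$ by Proposition \ref{prop:HRS}. Summing over all $j$ gives the stated formula.

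The main obstacle is verifying that the count of pairs $(\emptyset<\mathscr{F}_{\geq j},\xi)$ is independent of $F^{j}$. The count is uniform because Lemma \ref{lem:rankSelectedWeaklyRetral} parametrizes these pairs by $F^{j}=G^{j}$ (which has $\binom{n}{j}$ choices), $\ell$, $F$ and $I$, and the ranges of $\ell$, $F$, $I$ depend only on the size of $[n]\setminus G^{j}$. A second point to check is the convention $d_1=0$, which covers the degenerate case $j=1$.
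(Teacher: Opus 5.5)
Your proposal is correct and follows essentially the same route as the paper. Both split by $j=\min R_s$ and use Lemma \ref{lem:constructionWeaklyRetralConcatenation} to factor each pair into $(\emptyset<\mathscr{F}_{\geq j},\xi)$, counted by Proposition \ref{prop:rankSelectedWeaklyRetral} with weight $x^{r-j}$, and a retral chain of $U_{j-1,j}$ contributing $\underline{\H}_{U_{j-1,j}}(x)=d_j(x)/x$ by Proposition \ref{prop:HRS}. Your explicit handling of $j=0,1$ is a bit more careful than the paper, which absorbs these cases into the conventions $\underline{\H}_{U_{-1,0}}=1/x$ and $\underline{\H}_{U_{0,1}}=0$. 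The product structure only needs the third factor's generating function to be independent of $F^{j}$, which holds since $U^{F^{j}}_{r,n}\cong U_{j,j}$; the independence you flag as the main obstacle is true but not actually needed.
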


		\begin{proof}
			For $j \geq 0 $, we claim the generating function, with respect to length of chain, for pairs $ (\mathscr{F}, \xi) 
            \in \bigcup_{R} W_{R,i}(U_{r,n}) $ such that $j$ is the maximal integer with $ j \in \rks(\mathscr{F}) $ 
			and $ j-1 \notin \rks(\mathscr{F}) $ is
			\[ 
                \sum_{\ell=1}^{n-r-i+1} \, \binom{n}{j} \binom{n - j- \ell}{r-j - 1} \binom{n-r-\ell}{i-1} \, d_{j}(x) \cdot x^{r-j-1}.
			\]
			Indeed, Lemma \ref{lem:constructionWeaklyRetralConcatenation} implies that such pairs are uniquely
			determined by $ (\emptyset < \mathscr{F}_{\geq j}, \xi) \in 
            W_{R, i}(U_{r,n}) $ with $ R = \left\{0, j, j+1, \dots, r-1 \right\} $ and a retral chain
			$ \mathscr{F}_{<j} \in \mathscr{L}\left( U_{r,n}^{F^{j}_{\geq j}}\right) $ with $ j-1 \notin 
                \rks(\mathscr{F}_{<j}) $. 
			By Proposition \ref{prop:rankSelectedWeaklyRetral}, the generating function for the former is
			\[
                \sum_{\ell=1}^{n-r-i+1} \, \binom{n}{j} \binom{n - j- \ell}{r-
                j - 1} \binom{n-r-\ell}{i-1} \cdot x^{r-j},
			\]
            while the generating function for the latter is $d_{j}(x) \cdot \frac{1}{x} $ by Proposition \ref{prop:HRS}. 
			The result follows by summing over $j$.
		\end{proof}

    \subsection{Recursive formulas for refined Hodge--Poincar\'{e} polynomials}
        The truncation exact sequence \eqref{eq:truncationLES} and 
        Lemma \ref{lem:initInjective} provide a simple recurrence for the singular cohomology
        of uniform matroids. In this subsection, we use this recurrence to produce closed formulas for the refined Hodge--Poincar\'{e}
        polynomials.

        Recall the truncation exact sequence
        \begin{equation*}
            \cdots \to \bigoplus_{H \in \mathscr{H}(U_{r,n})} 
			H_{i}(K(\mathbb{Q}[\overline{\Star}(\rho_{H})] ) )_{j}
			\to H_{i} ( K ( \mathbb{Q}[\Sigma_{U_{r,n}}] ) )_{j+1} \to 
			H_{i}( K ( \mathbb{Q}[\Sigma_{U_{r-1,n}}] ) )_{j+1} \to \cdots
		\end{equation*}
        with connecting homomorphism 
        \[ 
            \delta_{i} \colon H_{i}( K(\mathbb{Q}[\Sigma_{U_{r-1,n}}] ))_{j}  \to
			\bigoplus_{H} H_{i-1}(K(\mathbb{Q}[\overline{\Star}(\rho_{H})]))_{j-1}.
	 	\]
        Lemma \ref{lem:initInjective} shows that $ \delta_{i} $ is injective for $ i \geq 1 $,
        while $ \delta_{0} $ vanishes trivially. 

        In order to get a recurrence for the refined Hodge--Poincar\'{e} polynomials, we first express the dimensions
        of $ H_{i}(K(\mathbb{Q}[\overline{\Star}(\rho_{H})]))_{j} $ in terms of \emph{Eulerian polynomials}.

        \begin{definition}
            The \emph{$n$th Eulerian polynomial} $ A_{n}(x) $ is the generating function for permutations in $ \mathfrak{S}_{n} $ with respect to 
            the number of excedences:
            \[ 
                A_{n}(x) = \sum_{\sigma \in \mathfrak{S}_{n}} x^{\exc(\sigma)},
            \]
            and we set $ A_{0}(x) =1 $ by convention.
        \end{definition}

        \begin{lem}
            For $ i \geq 0 $,
            \[ 
                \sum_{H \in \mathscr{H}(U_{r,n})} \sum_{j=0}^{r-2} \, \dim H_{i}(K(\mathbb{Q}[\overline{\Star}(\rho_{H})]))_{j} \cdot x^{j} 
                = \, \binom{n}{r-1}\binom{n-r}{i} \, A_{r-1}(x).
            \]
        \end{lem}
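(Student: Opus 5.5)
The plan is to compute the left-hand side hyperplane by hyperplane, using the explicit description in Lemma~\ref{lem:hyperplaneBasis}. The summand is independent of $H$, and there are $\left|\mathscr{H}(U_{r,n})\right| = \binom{n}{r-1}$ hyperplanes. So it suffices to show, for a single $H$, that the Hilbert series of $H_{i}(K(\mathbb{Q}[\overline{\Star}(\rho_{H})]))$ is $\binom{n-r}{i} A_{r-1}(x)$. Throughout I read the grading as in the refined Hodge--Poincar\'{e} polynomials: I track the polynomial degree of $x_{\mathscr{F}}$, i.e.\ the internal degree shifted by $i$. This is what makes the range $0 \leq j \leq r-2$ correct.

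\textbf{Step 1: the Künneth splitting.} By the proof of Lemma~\ref{lem:hyperplaneBasis}, there is an isomorphism
\[
H_{\bullet}(K(\mathbb{Q}[\overline{\Star}(\rho_{H})])) \cong H_{\bullet}(K(\mathbb{Q}[\Sigma_{U_{r-1,r-1}}])) \otimes_{\mathbb{Q}} \textstyle\bigwedge^{\bullet} N_{[n]\setminus H,\mathbb{Q}}^{\vee}.
\]
Since $\left|[n]\setminus H\right| = n-r+1$, the space $N_{[n]\setminus H,\mathbb{Q}}^{\vee}$ has dimension $n-r$. The basis $P_{H}$ restricts in each wedge degree $i$ to exactly $\binom{n-r}{i}$ pure wedges: the set $\{u_{1}<\dots<u_{i}\}$ can be any $i$-subset of $[n]\setminus H$ other than its maximum.

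\textbf{Step 2: the Boolean factor is concentrated in Koszul degree $0$.} Apply Theorem~\ref{thm:basis} in rank $r-1$ to $U_{r-1,r-1}$. By the example following Definition~\ref{def:admissible}, weakly retral chains in $\mathscr{L}(U_{m,m})$ admit no admissible wedges. Hence $H_{\bullet}(K(\mathbb{Q}[\Sigma_{U_{r-1,r-1}}]))$ equals its $H_{0}$ part, namely the Chow ring, with basis the retral chains $x_{\mathscr{F}}$. Consequently
\[
\dim H_{i}(K(\mathbb{Q}[\overline{\Star}(\rho_{H})]))_{j} = \binom{n-r}{i}\cdot \#\{\mathscr{F} \text{ retral in } \mathscr{L}(U_{r-1,r-1}),\ |\mathscr{F}|-1 = j\},
\]
where $j$ is the polynomial degree of $x_{\mathscr{F}}$. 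This degree ranges over $0,\dots,r-2$, which matches the range of summation in the statement.

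\textbf{Step 3: the Chow polynomial of the Boolean matroid is Eulerian.} It remains to show $\underline{\H}_{U_{m,m}}(x) = A_{m}(x)$ with $m = r-1$. This is the expected main point, though it is classical: $X_{\Sigma_{U_{m,m}}}$ is the $(m-1)$-dimensional permutohedral variety, whose $h$-vector is Eulerian. If a self-contained argument is preferred, I would take Proposition~\ref{prop:HRS} with $r=n=m$,
\[
\underline{\H}_{U_{m,m}}(x) = \sum_{j=0}^{m-1} \binom{m}{j}\, d_{j}(x)\,(1+x+\cdots+x^{m-1-j}),
\]
and match it with the standard decomposition $A_{m}(x) = \sum_{j} \binom{m}{j} d_{j}(x)\,A^{\mathrm{fix}}$. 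Concretely, I would split a permutation into its set of fixed points and a derangement of the remaining $j$ letters. I would then check, via a bijection on the $m-j$ fixed points or through the recurrence of \cite[Corollary 4.2]{JKMS} already used in Proposition~\ref{prop:HRS}, that these fixed points contribute $1+x+\cdots+x^{m-1-j}$ once the excedence statistic is appropriately adjusted.

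The only real risk is bookkeeping in this final identification, together with the degree-shift convention. The structural content is immediate from Lemma~\ref{lem:hyperplaneBasis} and Theorem~\ref{thm:basis}. Summing over the $\binom{n}{r-1}$ hyperplanes then gives $\binom{n}{r-1}\binom{n-r}{i}A_{r-1}(x)$.
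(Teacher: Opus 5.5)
Your proposal is correct and is essentially the paper's proof: Lemma~\ref{lem:hyperplaneBasis} gives $\binom{n-r}{i}\,\underline{\H}_{U_{r-1,r-1}}(x)$ for each of the $\binom{n}{r-1}$ hyperplanes, and $\underline{\H}_{U_{r-1,r-1}}(x)=A_{r-1}(x)$ is the classical fact the paper cites from \cite{HRS}; you are also right that $j$ must be read as the degree shifted by $i$, which is how the lemma is used in the subsequent recurrence. One caveat concerns your optional self-contained Step~3: fixed points contribute no excedences, so the decomposition gives $A_{m}(x)=\sum_{j=0}^{m}\binom{m}{j}d_{j}(x)$, which does not match Proposition~\ref{prop:HRS} term by term, and the two agree only after the recurrence $d_{m}(x)=\sum_{j=0}^{m-2}\binom{m}{j}d_{j}(x)(x+\cdots+x^{m-1-j})$ of \cite{JKMS} absorbs the extra geometric-series terms.
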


        \begin{proof}
            Lemma \ref{lem:hyperplaneBasis} implies that \[ \sum_{j=0}^{r-2} \, \dim H_{i}(K(\mathbb{Q}[\overline{\Star}(\rho_{H})]))_{j} \cdot x^{j} 
             = \binom{n-r}{i} \, \underline{\H}_{U_{r-1,r-1}}(x), \]
            for each $ H \in \mathscr{H}(U_{r,n}) $. It is well-known that 
            $ \underline{\H}_{U_{r-1,r-1}}(x) = A_{r-1}(x) $ (see \cite[Theorem 5.1]{HRS}). As
            $ \left| \mathscr{H}(U_{r,n})\right| = \binom{n}{r-1} $, the lemma follows.
        \end{proof}
    
        For $ i \geq 1 $, the truncation exact sequence therefore provides the recurrence
		\[ 
				\underline{\H}_{U_{r,n}}^{i}(x) = \binom{n}{r-1} \binom{n-r}{i}\, A_{r-1}(x) \cdot x- \underline{\H}_{U_{r-1, n}}^{i+1}(x)
                \cdot x,
		\]
        which produces the following formula.

        \begin{prop}\label{prop:recursiveRHP}
			For $ i \geq 1 $,
			\[
                \underline{\H}_{U_{r,n}}^{i}(x) = (-1)^{r-1}\, \binom{n-1}{i+r-1} \cdot x^{r-1} 
						+ \sum_{j=1}^{r-1}(-1)^{r+1-j} \, \binom{n}{j}\binom{n-j-1}{i+r-j-1} \,  A_{j}(x)\cdot x^{r-j}.
			\] 
		\end{prop}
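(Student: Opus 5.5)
We will prove Proposition \ref{prop:recursiveRHP} by induction on $r$, unrolling the recurrence
\[
\underline{\H}_{U_{r,n}}^{i}(x) = \binom{n}{r-1}\binom{n-r}{i}\, A_{r-1}(x)\cdot x - \underline{\H}^{i+1}_{U_{r-1,n}}(x)\cdot x ,
\]
which is valid for $i \geq 1$ and $r \geq 2$. Note that $i$ increases as $r$ decreases. The base case is $r=1$. By Example \ref{ex:U1n}, $H_{i}(K(\mathbb{Q}[\Sigma_{U_{1,n}}]))$ is $\bigwedge^{i} N_{\mathbb{Q}}^{\vee}$, concentrated in internal degree $j=i$. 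Hence $\underline{\H}^{i}_{U_{1,n}}(x) = \binom{n-1}{i}$, which matches the formula with $r=1$: the first term is $\binom{n-1}{i}x^{0}$ and the sum over $j$ is empty.

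For the inductive step, assume the formula for $U_{r-1,n}$ with index $i+1$, namely
\[
\underline{\H}^{i+1}_{U_{r-1,n}}(x) = (-1)^{r-2}\binom{n-1}{i+r-1}x^{r-2} + \sum_{j=1}^{r-2}(-1)^{r-j}\binom{n}{j}\binom{n-j-1}{i+r-j-1}A_{j}(x)\,x^{r-1-j}.
\]
Multiplying by $-x$ gives $(-1)^{r-1}\binom{n-1}{i+r-1}x^{r-1}$ together with the terms $(-1)^{r+1-j}\binom{n}{j}\binom{n-j-1}{i+r-j-1}A_{j}(x)x^{r-j}$ for $1\le j\le r-2$. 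These are exactly the desired terms except for $j=r-1$.

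The missing $j=r-1$ term is $\binom{n}{r-1}\binom{n-r}{i}A_{r-1}(x)\,x$. This is precisely the first summand of the recurrence, since $(-1)^{2}=1$ and $\binom{n-(r-1)-1}{i+r-(r-1)-1}=\binom{n-r}{i}$. So the induction closes with no binomial identity needed beyond matching indices.

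The only real obstacle is justifying the recurrence itself, which the text states just before the proposition. That justification uses three facts:
\begin{itemize}
\item the short exact pieces $0\to H_{i+1}(K(\mathbb{Q}[\Sigma_{U_{r-1,n}}]))_{j+1} \xrightarrow{\delta} \bigoplus_H H_{i}(K(\mathbb{Q}[\overline{\Star}(\rho_H)]))_{j} \to H_i(K(\mathbb{Q}[\Sigma_{U_{r,n}}]))_{j+1}\to 0$, which follow from the injectivity of $\delta_{i+1}$ (Lemma \ref{lem:initInjective});
\item the surjectivity of the next map, which holds because the following $\delta_i$ is injective;
\item the lemma computing the Hilbert series of the star terms as $\binom{n}{r-1}\binom{n-r}{i}A_{r-1}(x)$.
\end{itemize}
We would carefully check the degree shifts that produce the factors of $x$, using that $\underline{\H}^{i}$ records $x^{j-i}$.
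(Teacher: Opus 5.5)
Your proposal is correct and follows essentially the paper's own argument: induction on $r$ from the base case $\underline{\H}^{i}_{U_{1,n}}(x)=\binom{n-1}{i}$, unrolling the recurrence $\underline{\H}^{i}_{U_{r,n}}(x)=\binom{n}{r-1}\binom{n-r}{i}A_{r-1}(x)\,x-\underline{\H}^{i+1}_{U_{r-1,n}}(x)\,x$, which comes from the truncation sequence together with the injectivity of the connecting maps. Your index matching (the $j=r-1$ term supplied by the first summand, all other terms by $-x\,\underline{\H}^{i+1}_{U_{r-1,n}}(x)$) is exactly the verification the paper leaves implicit.
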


	\begin{proof}
		The base case is $ \underline{\H}^{i}_{U_{1,n}}(x) = \binom{n-1}{i} $, and the result follows from induction on $r$.
	\end{proof}

        The case $ i = 0 $ is for the Chow polynomial, and we have the recurrence
        \[ 
				\underline{\H}_{U_{r,n}}(x) = \binom{n}{r-1} \, A_{r-1}(x) \cdot x + 
					\underline{\H}_{U_{r-1,n}}(x) -\underline{\H}^{1}_{U_{r-1,n}}(x) \cdot x,
        \]
        which recovers the following formula of Ferroni et al.

        \begin{prop}[{\cite[Equation (13)]{FMSV}}]\label{prop:recursiveChow}
		For $ r \geq 1 $,
		\[ 
					\underline{\H}_{U_{r,n}}(x) = \sum_{j=0}^{r-1} \sum_{m=0}^{r-j-1} (-1)^{m}  \,
					\binom{n}{j} \binom{n-j-1}{m} \, A_{j}(x) \cdot x^{r - m -j -1}.
		\]	
	\end{prop}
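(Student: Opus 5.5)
We prove Proposition~\ref{prop:recursiveChow} by induction on $r$, using the Chow recurrence
\[
\underline{\H}_{U_{r,n}}(x) = \binom{n}{r-1} A_{r-1}(x)\, x + \underline{\H}_{U_{r-1,n}}(x) - \underline{\H}^{1}_{U_{r-1,n}}(x)\, x
\]
stated just above. The term $\underline{\H}^{1}_{U_{r-1,n}}(x)$ is evaluated by Proposition~\ref{prop:recursiveRHP} with $i=1$. The base case $r=1$ is immediate: $\underline{\H}_{U_{1,n}}(x)=1$, and the right-hand side has only $j=0$, $m=0$, giving $\binom{n}{0}\binom{n-1}{0}A_0(x)x^0=1$.

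For the inductive step, write $S_r$ for the claimed double sum. With $r-1$ in place of $r$ and $i=1$, Proposition~\ref{prop:recursiveRHP} gives
\[
\underline{\H}^{1}_{U_{r-1,n}}(x)\, x = (-1)^{r}\binom{n-1}{r-1}x^{r-1} + \sum_{j=1}^{r-2}(-1)^{r-j}\binom{n}{j}\binom{n-j-1}{r-j-1}A_j(x)\,x^{r-j}.
\]
Note that $\binom{n-1}{r-1}$ is exactly $\binom{n}{0}\binom{n-0-1}{r-1}$ with $A_0=1$. So this expression is uniformly
\[
\sum_{j=0}^{r-2}(-1)^{r-j}\binom{n}{j}\binom{n-j-1}{r-j-1}A_j(x)\,x^{r-j},
\]
after checking that the $j=0$ exponent matches ($x^{r-1}$ versus $x^{r-j}$ at $j=0$; see below).

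Next we compare $S_r - S_{r-1}$ coefficient by coefficient in $\binom{n}{j}A_j(x)$.
\begin{itemize}
\item For $j=r-1$, only $m=0$ occurs in $S_r$, giving $\binom{n}{r-1}A_{r-1}(x)x^0$.
\item For $j\le r-2$, $S_r$ has terms $m=0,\dots,r-j-1$ with exponent $x^{r-m-j-1}$, while $S_{r-1}$ has $m=0,\dots,r-j-2$ with exponent $x^{r-m-j-2}$. Reindexing $m\mapsto m+1$ in $S_r$, the terms of $S_r$ with $m\ge1$ read $(-1)^{m+1}\binom{n-j-1}{m+1}x^{r-m-j-2}$, which does not directly match $S_{r-1}$.
\end{itemize}
So I would instead pair each $m$ directly. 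The $m$-th term of $S_r$ equals $x$ times the $m$-th term of $S_{r-1}$, for $m\le r-j-2$. The extra top term $m=r-j-1$ of $S_r$ is $(-1)^{r-j-1}\binom{n}{j}\binom{n-j-1}{r-j-1}A_j(x)x^0$.

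This shows the naive recurrence shape $S_r = xS_{r-1}+\cdots$ disagrees with the stated recurrence $\underline{\H}_{U_{r,n}}=\underline{\H}_{U_{r-1,n}}+\cdots$ in its powers of $x$. I expect this bookkeeping to be the main obstacle. The likely resolution is that the degree convention for $\underline{\H}^{i}$, a shift by $x^{i}$, has to be tracked carefully when passing from $\underline{\H}^1_{U_{r-1,n}}$ to the $H_0$ part through the long exact sequence, and so does the exponent of the $j=0$ term above.

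To settle this, I would first re-derive the Chow recurrence at the level of $\dim H_0(\cdot)_j$ from the truncation sequence. The sequence is
\[
0\to\coker\delta_1\to H_0(U_{r,n})\to H_0(U_{r-1,n})\to 0,
\]
with $\coker\delta_1$ in degree $j$ equal to $\binom{n}{r-1}[A_{r-1}]_{j-1}-\dim H_1(U_{r-1,n})_{j}$. Here $H_1(\cdot)_j$ has $\underline{\H}^1$-exponent $j-1$, so the term contributed is $x\,\underline{\H}^1_{U_{r-1,n}}(x)$, confirming the recurrence. Then I would sanity-check the target identity on $U_{2,n}$ and $U_{3,4}$, where the Chow polynomial is $1+7x+x^2$, to pin down exponents. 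With exponents fixed, the remaining identity per $j$ reduces to verifying, coefficient of each power of $x$, a Pascal-type relation. Writing $N=n-j-1$, I expect it to be
\[
(-1)^{m}\binom{N}{m}-(-1)^{m}\binom{N}{m}=0
\]
for the shared terms, together with a cancellation between the top term of $S_r$ and the $j$-th term of $x\,\underline{\H}^1_{U_{r-1,n}}$ (both carry $\binom{n-j-1}{r-j-1}$ with opposite signs after adjustment). The new term $\binom{n}{r-1}A_{r-1}x$ supplies $j=r-1$. Summing over $j$ completes the induction.
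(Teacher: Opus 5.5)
There is a genuine gap. The induction you set up does not close, and your diagnosis of the mismatch is not the right one. The Chow recurrence is correct exactly as stated; your own re-derivation from the truncation sequence confirms it, and for $U_{3,5}$ it gives $10(1+x)x+(1+x)-9x^{2}=1+11x+x^{2}$. So there is no degree shift to repair.

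What the recurrence actually builds, term by term, is the \emph{un-reversed} polynomial
\[
\sum_{m=0}^{r-1}(-1)^{m}\binom{n-1}{m}x^{m}+\sum_{j=1}^{r-1}\sum_{m=0}^{r-j-1}(-1)^{m}\binom{n}{j}\binom{n-j-1}{m}A_{j}(x)\,x^{m+1}.
\]
Here $\binom{n}{r-1}A_{r-1}(x)\,x$ is the new $j=r-1$ term, and $-x\,\underline{\H}^{1}_{U_{r-1,n}}(x)$ supplies exactly the new top terms $m=r-j-1$. The $j=0$ one sits at $x^{r-1}$, which is why your ``uniform'' rewriting with exponent $x^{r-j}$ at $j=0$ is wrong.

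Your target $S_r$ is the reverse of this polynomial, so the per-$j$ comparison you plan cannot succeed. For $r=3$, $n=5$, the $j=0,1,2$ parts of $S_3$ are $x^{2}-4x+6$, $5x-15$ and $10+10x$. The recurrence applied to $S_2$ instead produces $6x^{2}+x-4$, $5-15x^{2}$ and $10x+10x^{2}$; only the totals agree. Likewise, the top term of $S_r$ and the $j$-th term of $-x\,\underline{\H}^{1}_{U_{r-1,n}}$ do not cancel. They have the same sign and correspond to each other only under $x\mapsto 1/x$.

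The missing ingredient is palindromicity: $x^{r-1}\underline{\H}_{U_{r,n}}(1/x)=\underline{\H}_{U_{r,n}}(x)$ (Poincar\'e duality for the Chow ring), and $x^{j-1}A_{j}(1/x)=A_{j}(x)$ for $j\ge 1$. The paper proves the un-reversed formula above by induction from the recurrence and Proposition~\ref{prop:recursiveRHP}, and then reverses both sides.

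If you prefer to work with $S_r$ directly, apply these symmetries to the recurrence first, after substituting Proposition~\ref{prop:recursiveRHP} for $\underline{\H}^{1}_{U_{r-1,n}}$. It becomes
\[
\underline{\H}_{U_{r,n}}(x)=x\,\underline{\H}_{U_{r-1,n}}(x)+\sum_{j=0}^{r-1}(-1)^{r-1-j}\binom{n}{j}\binom{n-j-1}{r-1-j}A_{j}(x),
\]
which is exactly the relation $S_r=xS_{r-1}+(\text{top terms})$ you observed, and the induction then closes immediately. Without these symmetries, or some nontrivial identity among Eulerian polynomials, the comparison cannot be completed.
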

	
	\begin{proof}
        By induction and Proposition \ref{prop:recursiveRHP}, one can show that
		\[
			\underline{\H}_{U_{r,n}}(x) = \sum_{m=0}^{r-1} (-1)^{m} \, \binom{n-1}{m} \cdot x^{m} 
					+ \sum_{j=1}^{r-1} \sum_{m=0}^{r-j-1} (-1)^{m} \, \binom{n}{j} \binom{n-j-1}{m}\,  A_{j}(x) \cdot x^{m+1}.
		\]
		The desired equation follows by taking the reverse polynomial of each side and using the fact
		$ \underline{\H}_{U_{k,n}}(x) $ and $ A_{j}(x) $ are palindromic.
	\end{proof}

    \begin{example}
            We use these formulas to list the Betti diagrams for the singular cohomology of loopless uniform matroids on ground set $[5]$
            in Table \ref{table:groundset}.
            \vspace*{1em}
            \begin{table}
                \setlength{\tabcolsep}{6pt}
                \renewcommand{\arraystretch}{1}
                \hfill
                \begin{tabular}{c|ccccc}
                    $U_{1,5} $ & $0$& $1$& $2$ & $3$ & $4$ \\ \hline 
                    $0$ & $1$ & $4 $& $6 $ & $4$ & $1$  \\
                    \multicolumn{6}{c}{} \\
                    \multicolumn{6}{c}{}
                \end{tabular}
                \hfill
                \begin{tabular}{c|cccc}
                    $U_{2,5} $ & $ 0 $ & $1$ & $2$ & $ 3 $ \\ \hline
                    $0$ & $1$ & $ 0 $ & $0$ & $0$ \\
                    $ 1$ & $ 1 $ & $ 9 $ & $ 11 $ & $ 4 $  \\
                    \multicolumn{5}{c}{}
                \end{tabular} 
                \hfill
                \begin{tabular}{c|ccc}
                    $U_{3,5} $ & $ 0 $ & $ 1 $ & $ 2 $ \\ \hline
                    $0 $ & $1$ & $0$ & $0$ \\
                    $ 1$ & $ 11 $ & $ 20 $ & $ 10 $ \\
                    $ 2 $ & $ 1 $ & $9$ & $6$
                \end{tabular} 
                \hfill \

                \vspace*{1em}

                \hfill
                \begin{tabular}{c|cc}
                    $U_{4,5} $ & $ 0 $ & $ 1 $  \\ \hline
                    $ 0 $ & $ 1 $ & $0$ \\
                    $ 1 $ & $ 21 $ & $ 10 $ \\
                    $ 2 $ & $ 21 $ & $ 30 $ \\
                    $ 3 $ & $ 1 $ & $ 4 $ \\
                    \multicolumn{3}{c}{}
                \end{tabular}
                \hfill
                \begin{tabular}{c|c}
                    $U_{5,5} $ & $ 0 $ \\  \hline
                    $ 0 $ & $ 1 $ \\
                    $ 1 $ & $ 26 $ \\
                    $ 2 $ & $ 66 $ \\
                    $ 3 $ & $ 26 $ \\
                    $ 4 $ & $ 1 $
                \end{tabular}
                \hfill \phantom{x}

                \caption{Betti diagrams for the singular cohomology of loopless uniform matroids on ground set $[5]$.}\label{table:groundset}
            \end{table}

        \end{example}

	\section{Lefschetz properties}\label{section:lefschetz}
        In this final section, we show that the singular cohomology ring of a uniform
        matroid satisfies a slight weakening of Hard Lefschetz which we call the \emph{quasi-projective strong Lefschetz property}.

		\subsection{Quasi-projective Lefschetz properties}
			We begin by recalling the weak and strong Lefschetz properties for standard
			graded Artinian algebras. See \cite{miglioreNagel} for a survey.

			\begin{definition}
				Let $ A^{\bullet} $ be an Artinian graded algebra and $ \ell \in A^{1} $ 
				a generic linear form.
				\begin{enumerate}
					\item We say that $A^{\bullet} $ satisfies the \emph{weak Lefschetz property} (WLP) if 
									\[ 
											\times \ell \colon A^{i} \longrightarrow A^{i+1}
									\]
                                    has maximal rank for all $ i \in \mathbb{Z}_{\geq 0} $.

					\item We say that $ A^{\bullet} $ satisfies the \emph{strong Lefschetz property} (SLP)
								if 
								\[ 
												\times  \ell^{d}\colon A^{i} \longrightarrow A^{i+d}
								\]
								has maximal rank for all $ i, d \in \mathbb{Z}_{\geq 0} $.
				\end{enumerate}
			\end{definition}

			For a smooth, quasi-projective variety $X$, the singular cohomology ring
			$ H^{\bullet}(X) $ is Artinian, so one can ask if it satisfies 
			WLP or SLP. However, this will often fail when torus factors are involved.

            \begin{example}\label{ex:sqSLP}
				Let $ Y $ be a smooth, projective toric variety of (complex) dimension $s$, and
				consider $ X = Y \times (\mathbb{C}^{*})^{t} $.

                Hodge theory implies that for
				a generic $ \ell \in \Gr_{2}^{W}H^{2}(Y) $,  the map
				\[ 
					\times \ell^{d} \colon H^{k}(Y) \longrightarrow H^{k + 2d}(Y)
				\]
				is injective for $ d \leq s - k $ and surjective for $ d \geq s-k $.
							Therefore, $ H^{\bullet}(Y) $ satisfies SLP.

				However, $ H^{\bullet}(X) $ does not satisfy SLP when $ t \geq 2 $.
				Indeed, $ H^{k}(X) \cong \bigoplus_{i=0}^{t} H^{k-i}(Y)^{\oplus \binom{t}{i}} $,
				so $ \times  \ell^{s -k + 1} \colon H^{k}(X) \to H^{2s -k + 2}(X) $ is 
				neither injective on the subfactor $ H^{k}(Y) \to  H^{2s-k+2}(Y) $ nor surjective
				on the subfactor $ H^{k-2}(Y)^{\oplus \binom{t}{2}} \to  H^{2s - k}(Y)^{\oplus \binom{t}{2}} $
                for $ 2 \leq k \leq s $.

				The weight filtration rectifies the situation. Now, 
				$ \Gr_{j}^{W} H^{k}(X) \cong H^{2k-j}(Y)^{\oplus \binom{t}{j-k}} $,
				so
				\[
						\times \ell^{d} \colon \Gr_{j}^{W} H^{k}(X) \to 
							\Gr_{j + 2d}^{W} H^{k + 2d} (X) 
				\]
				is injective for $ d \leq s + j - 2k  $ and surjective for $ d \geq s + j -2k $.
			\end{example}

            This example motivates using the weight filtration to define Lefschetz properties for quasi-projective varieties.

			\begin{definition}
				Let $ X $ be a smooth, quasi-projective variety and  $ \ell \in
				\Gr_{2}^{W}H^{2}(X) $ a generic hyperplane class in the singular cohomology ring.
				\begin{enumerate}
					\item We say that $H^{\bullet}(X) $ satisfies the
						\emph{quasi-projective weak Lefschetz property} (qpWLP) if								
                            \[ 
							    \times \ell \colon \Gr_{j}^{W}H^{k}(X) \longrightarrow
								\Gr_{j+2}^{W}H^{k+2}(X)
							\]
					    has full rank for all $j,k \in \mathbb{Z}_{\geq 0} $.

					\item We say that $ H^{\bullet}(X) $ satisfies the \emph{quasi-projective
								strong Lefschetz property} (qpSLP) if 
								\[ 
										\times \ell^{d} \colon \Gr_{j}^{W}H^{k}(X) \longrightarrow
												\Gr_{j+2d}^{W}H^{k+2d}(X)
								\]
								has full rank for all $ d,j,k \in \mathbb{Z}_{\geq 0} $.
				\end{enumerate}
			\end{definition}
			
		\subsection{Lefschetz in the singular cohomology ring of a
		uniform matroid}
			
		We now show that the singular cohomology ring of a uniform matroid satisfies the quasi-projective
        strong Lefschetz property. 

        Our proof uses the truncation exact sequence \eqref{eq:truncationLES} to bootstrap
        Lefschetz properties for the singular cohomology of $ U_{r,n} $ from the singular cohomology
        of  the permutohedral variety.
	
        \begin{lem}\label{lem:lefschetzPermutohedral}
            For a hyperplane $H \in \mathscr{L}(U_{r,n}) $ and
            generic  $ \ell \in \Gr_{2}^{W}H^{2}(X_{\overline{\Star}(\rho_{H})}) $,
			the map
			\[ 
                    \times \ell^{d}	\colon \Gr_{j}^{W}H^{k}\left( X_{\overline{\Star}(\rho_{H})} \right)
                    \to \Gr_{j+2d}^{W}H^{k+2d} \left( X_{\overline{\Star}(\rho_{H})} \right)
			\]
			is injective for $ 0 \leq d \leq r +j -2k -2 $ and surjective for $ d \geq r + j -2k -2 $.
	    \end{lem}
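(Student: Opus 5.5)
We will reduce the statement to Hard Lefschetz for the permutohedral variety of dimension $r-2$, arguing exactly as in Example \ref{ex:sqSLP}.

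\textbf{Product decomposition.} As in the proof of Lemma \ref{lem:hyperplaneBasis}, the inclusion $K_{\bullet}(\mathbb{Q}[\Star(\rho_{H})]) \hookrightarrow K_{\bullet}(\mathbb{Q}[\overline{\Star}(\rho_{H})])$ is a quasi-isomorphism by \cite[Lemma A.3]{binder}. It is a map of bigraded differential algebras, so it induces an isomorphism of bigraded rings. We then have an isomorphism of bigraded differential algebras
\[ K_{\bullet}(\mathbb{Q}[\Star(\rho_{H})]) \cong K_{\bullet}(\mathbb{Q}[\Sigma_{U_{r-1,r-1}}]) \otimes_{\mathbb{Q}} \textstyle\bigwedge^{\bullet} N^{\vee}_{[n]\setminus H,\mathbb{Q}}, \]
with trivial differential on the exterior factor. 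The exterior factor sits in bidegree $(i,j)=(1,1)$ per generator.

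\textbf{Identifying the pieces.} Since $\Sigma_{U_{r-1,r-1}}$ is complete and unimodular, its Koszul homology is concentrated in $H_{0}$ and equals the Chow ring. This ring is the cohomology ring of the smooth projective permutohedral variety $Y$, of complex dimension $s=r-2$. The Künneth formula therefore gives a bigraded ring isomorphism
\[ H_{\bullet}(K(\mathbb{Q}[\overline{\Star}(\rho_{H})])) \cong A^{\bullet}(Y) \otimes \textstyle\bigwedge^{\bullet} \mathbb{Q}^{t}, \qquad t = n-r. \]
Thus $\overline{\Star}(\rho_H)$ behaves cohomologically like $Y \times (\mathbb{C}^{*})^{t}$. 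Translating bidegrees via $\Gr^{W}_{2j}H^{2j-i} \cong H_{i}(K)_{j}$, the piece $\Gr^{W}_{j}H^{k}$ is nonzero only for $j$ even. It is isomorphic to $A^{j-k}(Y)\otimes \bigwedge^{j-k}\mathbb{Q}^{t}$, i.e. the $H^{2(j-k)}(Y)$ part with $\binom{t}{j-k}$ copies, matching $\Gr^{W}_{j}H^{k}(X) \cong H^{2k-j}(Y)^{\oplus\binom{t}{j-k}}$ in the example. One should double-check the index bookkeeping; the cohomological degree of the $Y$-part is $2j-2(j-k)$ computed with the paper's conventions.

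\textbf{Applying Hard Lefschetz.} Next, $\Gr^{W}_{2}H^{2}$ corresponds to $H_{0}(K)_{1} = A^{1}(Y)\otimes 1$. A generic $\ell$ is therefore a generic (hence ample-type, after genericity) class in $A^{1}(Y)$. Multiplication by $\ell^{d}$ acts only on the $A^{\bullet}(Y)$ tensor factor, so it is a direct sum of $\binom{t}{j-k}$ copies of $\times\ell^{d}\colon A^{p}(Y)\to A^{p+d}(Y)$, where $p$ is the Chow degree attached to $(j,k)$. Hard Lefschetz for $Y$, from \cite{AHK} or classical projective toric Hodge theory, gives the following: this map is injective when $p+d \le s-p$, i.e. $d\le s-2p$, and surjective when $d\ge s-2p$. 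Here a generic class from the (open) ample cone works, and genericity in the whole space suffices because full rank is a Zariski-open condition that is nonempty.

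\textbf{Main obstacle.} The main risk is purely bookkeeping: we must express $p$ in terms of $(j,k)$ so that $s-2p = r+j-2k-2$. With $s=r-2$ this needs $2p = 2k-j$, i.e. $p$ equals the Chow degree of the $Y$-factor, $(2k-j)/2$, consistent with the example's $H^{2k-j}(Y)$. I would carefully verify this against the definition $\Gr^{W}_{2j}H^{2j-i}\cong H_{i}(K)_{j}$, including the degree shift of the exterior generators. For odd or otherwise incompatible $(j,k)$ the graded pieces vanish, so the claim holds trivially there.
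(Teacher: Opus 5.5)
Your proposal is correct and follows the same route as the paper. You pass from $\overline{\Star}(\rho_H)$ to $\Star(\rho_H)$, identify the latter with the permutohedral variety of dimension $r-2$ times $(\mathbb{C}^{*})^{n-r}$ (via K\"{u}nneth on the Koszul complex, where the paper states this at the level of varieties), and then apply Hard Lefschetz exactly as in Example \ref{ex:sqSLP}. One slip in your middle paragraph needs fixing: $\Gr^{W}_{j}H^{k}$ corresponds to $A^{k-j/2}(Y)\otimes\bigwedge^{j-k}\mathbb{Q}^{t}$, i.e.\ $H^{2k-j}(Y)^{\oplus\binom{t}{j-k}}$, not $A^{j-k}(Y)\otimes\bigwedge^{j-k}\mathbb{Q}^{t}$; this is the value $p=(2k-j)/2$ you correctly use at the end to obtain the threshold $r+j-2k-2$.
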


        \begin{proof}
            By \cite[Lemma A.3]{binder}, $ H^{\bullet} \left( X_{\overline{\Star}(\rho_{H})}\right)
            \cong H^{\bullet} \left( X_{\Star(\rho_{H})} \right) $, so it suffices to
            consider $ X_{\Star(\rho_{H})} $. Now, $ X_{\Star(\rho_{H})} \cong X_{\Sigma_{U_{r-1,r-1}}} 
            \times (\mathbb{C}^{*})^{n-\left| H \right|-1} $.
            As $ X_{\Sigma_{U_{r-1,r-1}}} $ is the permutohedral variety of dimension $r-2$, 
            the result follows as in Example \ref{ex:sqSLP}.
        \end{proof}

        Now consider an arbitrary class $ \ell \in \Gr_{2}^{W}H^{2}(X_{\Sigma_{U_{r,n}}}) $, and let
        $ \ell_{H} $ denote its pull-back to $ X_{\overline{\Star}(\rho_{H})} $ and
        $ \bar{\ell} $ its pull-back to $ X_{\Sigma_{U_{r-1,n}}} $. 
        In terms of linear forms in the Stanley--Reisner ring, $ \ell_{H} $ quotients out the indeterminates
        $ x_{F} $ such that $ F \not\leq H $ and $ \bar{\ell} $ quotients out the indeterminates $ x_{H} $
        for $ H \in \mathscr{H}(U_{r,n}) $.
        The truncation exact sequence \eqref{eq:truncationLES} produces commuting squares
        \begin{center}
            \begin{tikzcd}
            \Gr_{j}^{W} H^{k}(\Sigma_{U_{r-1,n}}) \dar{\times \bar{\ell}^{d}} \rar{\delta} &
            \displaystyle \bigoplus_{H \in \mathscr{H}(U_{r,n})} \Gr_{j-2}^{W}H^{k-1}\left( 
                X_{\overline{\Star}(\rho_{H})} \right)
            \dar{\oplus \times \ell_{H}^{d}} \\
            \Gr_{j+2d}^{W} H^{k+2d}(\Sigma_{U_{r-1,n}})  \rar{\delta} &
            \displaystyle \bigoplus_{H \in \mathscr{H}(U_{r,n})} \Gr_{j + 2d -2}^{W}H^{k + 2d -1}\left( 
                X_{\overline{\Star}(\rho_{H})} \right)
            \end{tikzcd}
        \end{center}
        and
    	\begin{center}
	    	\begin{tikzcd}
                \displaystyle \bigoplus_{H \in \mathscr{H}(U_{r,n})} 
                    \Gr_{j-2}^{W}H^{k-2}\left( X_{\overline{\Star}(\rho_{H})} \right)
                        \dar{\oplus \times \ell_{H}^{d}}
                        \rar &  \Gr_{j}^{W} H^{k} \left(X_{\Sigma_{U_{r,n}}} \right)  \dar{\times \ell^{d}} \\
                        \displaystyle \bigoplus_{H \in \mathscr{H}(U_{r,n})} 
                        \Gr_{j+2d-2}^{W}H^{k+2d -2}\left( X_{\overline{\Star}(\rho_{H})} \right) \rar
                        & \Gr_{j+ 2d}^{W} H^{k +2d} \left(X_{\Sigma_{U_{r,n}}} \right).
		    \end{tikzcd}
		\end{center}

	\begin{thm}\label{thm:qpSLP}
        Let $ \ell \in \Gr_{2}^{W}H^{2}(X_{\Sigma_{U_{r,n}}}) $ be generic. Then the map
        \[ 
            \times \ell^{d} \colon \Gr_{j}^{W}H^{k}(X_{\Sigma_{U_{r,n}}}) \to
            \Gr_{j+2d}^{W}H^{k+2d}(X_{\Sigma_{U_{r,n}}})
        \]
        is injective for $ 0 \leq d \leq r + j-2k-1 $ and surjective for $ d > r + j-2k-1 $.
        In particular, the singular cohomology ring of a uniform matroid satisfies qpSLP.
	\end{thm}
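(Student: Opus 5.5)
We argue by induction on $r$, using the truncation exact sequence \eqref{eq:truncationLES} and the two commuting squares displayed just before the statement. The base case $r=1$ is immediate. There $H^{\bullet}(X_{\Sigma_{U_{1,n}}}) \cong \bigwedge^{\bullet}N^{\vee}_{\mathbb{Q}}$ by Example \ref{ex:U1n}, so all classes have $j = 2k - i$ with weight index $j$ equal to $2\cdot(\text{Koszul degree }j')$ and $j'=i$. Thus $\Gr_2^W H^2 = 0$ and $\ell = 0$. The bound $r+j-2k-1$ equals $j-2k$, which is $\leq -1$ for nonzero groups (here $j=k$ in cohomological grading), so only surjectivity for $d \geq 1$ is claimed. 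That holds because the target $\Gr^W_{j+2d}H^{k+2d}$ vanishes: weight exceeds degree only by Koszul $i$, and the degree-$0$ Stanley--Reisner part forces the target to be zero. We will double check the indexing, but the base case should be trivial.

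For the inductive step, by Lemma \ref{lem:initInjective} the connecting maps $\delta_i$ are injective for $i\ge 1$, and $\delta_0=0$. Hence \eqref{eq:truncationLES} splits into short exact sequences
\[ 0 \to \Gr^W_{j-2}H^{k-1}(X_{U_{r-1,n}})_{i\geq 1} \xrightarrow{\delta} \bigoplus_H \Gr^W_{j-2}H^{k-2}(X_{\overline{\Star}(\rho_H)}) \to \Gr^W_j H^k(X_{\Sigma_{U_{r,n}}}) \to \Gr^W_j H^k(X_{\Sigma_{U_{r-1,n}}})_{i=0} \to 0, \]
compatible with multiplication by $\ell^d$, $\ell_H^d$, $\bar\ell^d$. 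The last term is the Chow ring part of $U_{r-1,n}$, nonzero only when $j=2k$. Choosing $\ell$ generic makes each $\ell_H$ generic, since the restriction $\Gr_2^WH^2(X_{\Sigma_{U_{r,n}}}) \to \Gr_2^WH^2(X_{\overline{\Star}(\rho_H)})$ is surjective and genericity is a Zariski-open condition, intersected over finitely many $H$. Likewise $\bar\ell$ is generic. Then Lemma \ref{lem:lefschetzPermutohedral} (with $(j,k)$ replaced by $(j-2,k-2)$) gives injectivity of $\oplus\ell_H^d$ for $d \le r+(j-2)-2(k-2)-2 = r+j-2k$ and surjectivity for $d\geq r+j-2k$. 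The inductive hypothesis gives the Lefschetz range for $\bar\ell$ on $U_{r-1,n}$.

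We then handle two cases by diagram chasing via the snake lemma / four lemma.

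\textbf{Case $j<2k$ (weight index $i \geq 1$).} Here $\Gr^W_jH^k(X_{\Sigma_{U_{r,n}}}) \cong \coker\delta$, so it is a quotient of the star part by the image of the $U_{r-1,n}$ part. For surjectivity with $d > r+j-2k-1$, i.e.\ $d \geq r+j-2k$, surjectivity on the star part passes to the quotient. Injectivity for $d\le r+j-2k-1$ is the main obstacle. Injectivity on the star part does not pass to cokernels automatically. By the snake lemma we need $\times\bar\ell^d$ on the $U_{r-1,n}$ term (in degrees $(j-2,k-1)$) to be surjective, or at least its cokernel to map injectively. The inductive range for $U_{r-1,n}$ at $(j-2,k-1)$ gives surjectivity for $d > (r-1)+(j-2)-2(k-1)-1 = r+j-2k-2$. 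That is exactly $d\ge r+j-2k-1$, which does not cover all $d \le r+j-2k-1$.

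So we will instead use the explicit basis: the cokernel has standard monomials from Lemma \ref{lem:standardMonomials}, and $\init$ is an injection onto a coordinate subset. We would try to show that the Gröbner structure is compatible with multiplication by a suitably chosen $\ell$, such as a lexicographically weighted form, so that $\ell^d$ maps the span of standard monomials injectively modulo $\im\delta$. Genericity then extends the conclusion from a special $\ell$. If this fails, we would fall back to computing ranks via Hodge numbers from Theorem \ref{thm:hosterHP} and unimodality.

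\textbf{Case $j=2k$ (Chow part).} Here the middle and right terms interact through the Chow ring. The claimed range $d\le r-1-k$ is the usual Hard Lefschetz range of $A^\bullet(X_{\Sigma_{U_{r,n}}})$ of dimension $r-1$, and surjectivity beyond it holds. This case follows directly from Adiprasito--Huh--Katz \cite{AHK}, once we note that the Chow ring is the $i=0$ column and $\ell^d$ preserves it.
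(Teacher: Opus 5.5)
Your surjectivity argument and your treatment of the Chow column agree with the paper. For $j\neq k$, Lemma~\ref{lem:initInjective} makes $\bigoplus_H \Gr^W_{j-2}H^{k-2}(X_{\overline{\Star}(\rho_H)})\to \Gr^W_jH^k(X_{\Sigma_{U_{r,n}}})$ onto, so surjectivity of $\oplus\times\ell_H^d$ for $d\ge r+j-2k$ passes to the quotient. For $j=k$ the claim is weaker than Hard Lefschetz for the Chow ring.

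There is, however, a genuine gap: the injectivity half for $j\neq k$, which is the real content of the theorem, is not proved. As you observe, injectivity does not descend to a cokernel, and the inductive input you would need is not available. The $U_{r-1,n}$ term is $\Gr^W_jH^{k-1}(X_{\Sigma_{U_{r-1,n}}})$, not $\Gr^W_{j-2}H^{k-1}$. There, induction gives surjectivity of $\times\bar\ell^d$ only for $d>r+j-2k$, which is disjoint from the range $d\le r+j-2k-1$ you need. Neither fallback fills this. The Gr\"obner compatibility with a special $\ell$ is never formulated. Dimension counts plus unimodality cannot show that a specific map has full rank; unimodality is a consequence of this theorem, not an input. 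Note also that the Koszul degree is $i=j-k$, so the Chow column is $j=k$, not $j=2k$. The locus $j=2k$ is the top-weight part $H_i(K)_i$, which is not the Chow ring.

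The missing idea is to realize $\Gr^W_jH^k(X_{\Sigma_{U_{r,n}}})$ as a \emph{subspace} by going one rank up, instead of as a quotient. For $r<n$, $U_{r,n}$ is the truncation of $U_{r+1,n}$. Lemma~\ref{lem:initInjective} at rank $r+1$ says that
\[ \delta\colon \Gr^W_jH^k(X_{\Sigma_{U_{r,n}}})\longrightarrow \bigoplus_{H\in\mathscr{H}(U_{r+1,n})}\Gr^W_{j-2}H^{k-1}(X_{\overline{\Star}(\rho_H)}) \]
is injective for $j\neq k$. Lift $\ell=\bar{\hat\ell}$ with $\hat\ell\in\Gr^W_2H^2(X_{\Sigma_{U_{r+1,n}}})$ and every $\hat\ell_H$ generic. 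This is possible for generic $\ell$, since $\Gr^W_2H^2(X_{\Sigma_{U_{r+1,n}}})\to\Gr^W_2H^2(X_{\Sigma_{U_{r,n}}})$ is surjective. The first commuting square at rank $r+1$ then shows that $\times\ell^d$ is injective whenever $\oplus\times\hat\ell_H^d$ is. Lemma~\ref{lem:lefschetzPermutohedral} for $U_{r+1,n}$ gives the latter for $d\le (r+1)+(j-2)-2(k-1)-2=r+j-2k-1$, exactly the claimed range.

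So injectivity uses the sequence for $U_{r+1,n}\to U_{r,n}$, and surjectivity uses the one for $U_{r,n}\to U_{r-1,n}$. No induction on $r$ is needed, and for $r=n$ there are no classes with $j\neq k$.
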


    \begin{proof}
        If $ j=k $ or $ r= n $, this is a weaker statement than Hard Lefschetz for the Chow ring
        of a matroid or the permutohedral variety, respectively.
        Therefore, assume $ j \neq k $ and $ r < n $. 

        When $ 0 \leq d \leq r + j-2k- 1$, we use the first commuting square from above. 
        We may find $ \hat{\ell} \in \Gr_{2}^{W}H^{k}(X_{\Sigma_{U_{r+1,n}}}) $ such that
        $ \ell = \bar{\hat{\ell}} $. By genericity, we may also assume each $ \hat{\ell}_{H} $ is generic.
        The map $ \oplus \times \hat{\ell}_{H}^{d} $ is injective by Lemma \ref{lem:lefschetzPermutohedral}.
        The horizontal maps are also injective by Lemma \ref{lem:initInjective}, so $ \times \ell^{d} $ 
        is injective as well.

        When $ d > r + j-2k - 1 $, we use the second square. The map $ \oplus \times \ell_{H}^{d} $ 
        is surjective by Lemma \ref{lem:lefschetzPermutohedral}, and the  horizontal maps are surjective 
        by Lemma \ref{lem:initInjective}. This implies $ \times \ell^{d} $ is surjective.
    \end{proof}

    \begin{rmk}
        This proof can be adapted to show that the conclusion of Theorem \ref{thm:qpSLP} 
        holds for any $ \ell \in \Gr_{2}^{W}H^{2}(X_{\Sigma_{U_{r,n}}}) $ which is the pull-back of a class $ \hat{\ell} $ from the
        \emph{ample cone} of $ \Sigma_{U_{r+1,n}} $ (see \cite[Definition 4.2]{AHK}).
        The idea is that both $ \hat{\ell}_{H} $ and $ \bar{\hat{\ell}}_{H} $ are in the ample cone
        of $ \Star(\rho_{H}) $ for each $ H \in \mathscr{H}(U_{r+1,n}) $ and $H \in \mathscr{H}(U_{r,n}) $, respectively,
        by \cite[Proposition 4.4]{AHK}. Then \cite[Theorem 8.8]{AHK} implies the conclusion of Lemma \ref{lem:lefschetzPermutohedral}
        holds for $ \hat{\ell}_{H} $ and $ \bar{\hat{\ell}}_{H} $, and the rest of the proof follows verbatim.
    \end{rmk}

    A combinatorial consequence of the quasi-projective strong Lefschetz property is the following.

    \begin{cor}
        The refined Hodge--Poincar\'{e} polynomials $ \underline{\H}^{i}_{U_{r,n}}(x) $ have unimodal
        coefficients. Equivalently, the Betti diagram for the Hodge numbers of $ U_{r,n} $ has unimodal columns.
    \end{cor}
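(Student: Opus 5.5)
The corollary follows from Theorem \ref{thm:qpSLP} by the standard argument that Lefschetz maps force unimodality. Fix $i \geq 0$. The coefficient of $x^{m}$ in $\underline{\H}^{i}_{U_{r,n}}(x)$ is
\[
\beta_{i,m+i} = \dim H_{i}(K(\mathbb{Q}[\Sigma_{U_{r,n}}]))_{m+i} = \dim \Gr^{W}_{2(m+i)}H^{2m+i}(X_{\Sigma_{U_{r,n}}}),
\]
so the $i$th column of the Betti diagram lists the dimensions of the spaces $\Gr^{W}_{2j}H^{2j-i}$ with $j = m+i$. Multiplication by a generic $\ell \in \Gr^{W}_{2}H^{2}$ maps $\Gr^{W}_{2j}H^{2j-i}$ to $\Gr^{W}_{2j+2}H^{2j+2-i}$. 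In Koszul terms this is multiplication by a linear form of bidegree $(0,1)$, so it preserves the homological index $i$. It therefore moves one step down the $i$th column, from row $m$ to row $m+1$. Unimodality then comes from comparing consecutive entries of the column through this single map, i.e.\ the case $d=1$ of the theorem.

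Apply Theorem \ref{thm:qpSLP} with weight $2j$, degree $k=2j-i$ and $d=1$. The threshold is
\[
r + 2j - 2(2j-i) - 1 = r - 2j + 2i - 1 = r - 2m - 1 .
\]
If $1 \leq r-2m-1$, that is $2m \leq r-2$, then $\times \ell$ is injective, so $\beta_{i,m+i} \leq \beta_{i,m+i+1}$. If $1 > r-2m-1$, that is $2m \geq r-1$, then $\times\ell$ is surjective, so $\beta_{i,m+i} \geq \beta_{i,m+i+1}$. The column is therefore weakly increasing up to index $\lfloor (r-1)/2 \rfloor$ and weakly decreasing afterwards. This is unimodality with peak near the middle, independent of $i$, which matches the palindromic-looking middle of the columns in Table \ref{table:groundset}.

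The only point requiring care is the index bookkeeping: translating Koszul bidegrees into $(j,k)$, and checking that the inequality cases of Theorem \ref{thm:qpSLP} split exactly at a single index with no gap. Because the theorem's conditions are complementary (injective for $d \leq r+j-2k-1$, surjective for $d > r+j-2k-1$), every consecutive pair in the column is covered. A minor edge case is $m$ below $0$ or beyond the support of the column, where the entries vanish and the comparison is trivial. The equivalence with unimodal columns of the Betti diagram is immediate from the definition $\underline{\H}^{i}_{M}(x) = \sum_{j} \beta_{i,j}x^{j-i}$.
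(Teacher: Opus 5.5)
Your argument is correct and is the deduction the paper intends; the paper gives no separate proof and presents the corollary as an immediate consequence of Theorem \ref{thm:qpSLP}. The $d=1$ case with your threshold $r-2m-1$ gives $\beta_{i,m+i}\le\beta_{i,m+i+1}$ for $2m\le r-2$ and $\beta_{i,m+i}\ge\beta_{i,m+i+1}$ for $2m\ge r-1$, which is unimodality. Two small slips in your summary, neither affecting the proof: these inequalities put the peak at index $\lfloor r/2\rfloor$, not $\lfloor (r-1)/2\rfloor$ (column $1$ of $U_{4,5}$ is $0,10,30,4$), and the columns for $i\ge 1$ are not palindromic.
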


    In fact, we conjecture that a stronger statement is true.

    \begin{conj}
        The refined Hodge--Poincar\'{e} polynomials $\underline{\H}^{i}_{U_{r,n}}(x) $ are real-rooted.
    \end{conj}

    Real-rootedness is known for the Chow polynomials by \cite[Theorem 1.1]{BV}.
    As evidence for the other refined Hodge--Poincar\'{e} polynomials, we have verified each $ \underline{\H}^{i}_{U_{r,n}}(x) $
    has ultra-log-concave coefficients for $ n \leq 230 $ using  Proposition \ref{prop:recursiveRHP} and a short Python script.
			
		\printbibliography
\end{document}